\newtheorem{thm}{Theorem}[section]
\newtheorem{prop}[thm]{Proposition}
\newtheorem{lem}[thm]{Lemma}
\newtheorem{cor}[thm]{Corollary}
\theoremstyle{definition}
\newcommand{\ep}{\epsilon}
\newcommand{\mbb}{\mathbb}
\newcommand{\ov}{\overline}
\newcommand{\pa}{\partial}
\newcommand{\mf}{\mathbb}
\newcommand{\om}{\omega}
\newcommand{\Om}{\Omega}
\newcommand{\al}{\alpha}
\newcommand{\z}{\zeta}
\newcommand{\ti}{\tilde}
\newcommand{\De}{\Delta}
\renewcommand{\Re}{\operatorname{Re}}
\newcommand{\Aut}{\operatorname{Aut}}
\numberwithin{equation}{section}
\title{Limits of an increasing sequence of complex manifolds}
\keywords{Union problem, Kobayashi hyperbolic, Kobayashi corank one, Levi corank one domains}
\subjclass{Primary: 32F45; Secondary: 32H15, 32H20}
\author{G. P. Balakumar, Diganta Borah, Prachi Mahajan, and Kaushal Verma}
\address{G. P. Balakumar: Department of Mathematics, Indian Institute of Technology Palakkad, 678557, India}
\email{gpbalakumar@gmail.com}
\address{Diganta Borah: Indian Institute of Science Education and Research, Pune  411008, India}
\email{dborah@iiserpune.ac.in}
\address{Prachi Mahajan: Department of Mathematics, Indian Institute of Technology Bombay, Powai, Mumbai 400076, India}
\email{prachi.mjn@iitb.ac.in}
\address{Kaushal Verma: Department of Mathematics, Indian Institute of Science, Bangalore 560 012, India}
\email{kverma@iisc.ac.in}
\begin{document}
\maketitle
\begin{abstract}
Let $M$ be a complex manifold which admits an exhaustion by open subsets $M_j$ each of which is biholomorphic to a fixed domain $\Omega \subset \mbb C^n$. The main question addressed here is to describe $M$ in terms of $\Omega$. Building on work of Fornaess--Sibony, we study two cases namely, $M$ is Kobayashi hyperbolic and the other being the corank one case in which the Kobayashi metric degenerates along one direction. When $M$ is Kobayashi hyperbolic, its complete description is obtained when $\Omega$ is one of the following domains -- (i) a smoothly bounded Levi corank one domain, (ii) a smoothly bounded  convex domain, (iii) a strongly pseudoconvex polyhedral domain in $\mbb C^2$, or (iv) a simply connected domain in $\mbb C^2$ with generic piecewise smooth Levi-flat boundary. With additional hypotheses, the case when $\Omega$ is the minimal ball or the symmetrized polydisc in $\mbb C^n$ can also be handled. When the Kobayashi metric on $M$ has corank one and $\Omega$ is either of (i), (ii) or (iii) listed above, it is shown that $M$ is biholomorphic to a locally trivial fibre bundle with fibre $\mbb C$ over a holomorphic retract of $\Omega$ or that of a limiting domain associated with it. Finally, when $\Omega = \Delta \times \mbb B^{n-1}$, the product of the unit disc $\Delta \subset \mbb C$ and the unit ball $\mbb B^{n-1} \subset \mbb C^{n-1}$, a complete description of holomorphic retracts is obtained. As a consequence, if $M$ is Kobayashi hyperbolic and $\Omega = \Delta \times \mbb B^{n-1}$, it is shown that $M$ is biholomorphic to $\Omega$. Further, if the Kobayashi metric on $M$ has corank one, then $M$ is globally a product; in fact, it is biholomorphic to $Z \times \mbb C$, where $Z \subset \Omega = \Delta \times \mbb B^{n-1}$ is a holomorphic retract.
\end{abstract}

\maketitle


\section{Introduction}

Let $M$ be a complex manifold which is the union of an increasing sequence of open subsets $M_j$ each of which is biholomorphic to a fixed domain $\Om \subset \mbb{C}^n$. Then it is of interest to describe $ M $ in terms of $\Om$. In what follows, this problem will be referred to as the \textit{union problem}.

\medskip

Among the simplest cases, $ \Omega = \mathbb{B}^n $ was considered by Forn\ae ss and Stout in \cite{For-Sto} who showed that $ M $ is biholomorphic to $ \mathbb{B}^n $ if $ M $ is an open subset of a taut complex manifold. Subsequently in \cite{FS1981}, Forn\ae ss and Sibony investigated the union problem using the infinitesimal Kobayashi metric $F_{M}(p,v)$ at $ p $ of $ M $ in the tangent direction $ v \in T_pM $ (here and henceforth by $T_pM$ we mean the holomorphic tangent space $T_p^{1,0}M$). Assume that $ \Omega $ is hyperbolic and $ \Omega/ \Aut(\Omega) $ is compact. Here, $ \Aut(\Omega) $ denotes the holomorphic automorphism group of $ \Omega $ with the standard compact open topology. It was shown that if there is a $ p \in M $ such that $ F_M(p, v) \neq 0 $ whenever $ v \neq 0 $, then $ M $ is biholomorphic to $ \Omega $. Furthermore, under the same assumptions on $ \Omega $, it was also proved that the zero set of $F_M(p, \cdot): T_pM \to \mf{R}$, which is a complex vector subspace of $T_pM$, has dimension independent of $p$; and if this constant dimension, called the \textit{corank} of $F_M$, is one, then there exists a closed complex submanifold $ Z $  of $ \Omega $ such that $ M $ is biholomorphic to a locally trivial holomorphic fibre bundle over $ Z $ with fibre $ \mathbb{C} $. Behrens in \cite{Behrens} further extended the results of Forn\ae ss and Sibony: If $ \Omega \subset \mathbb{C}^n $ is $ C^{2}$-smooth strongly pseudoconvex and $ M $ is hyperbolic, then $ M $ is biholomorphic either to $ \Omega $ or to the unit ball $ \mathbb{B}^n \subset \mathbb{C}^n $. Moreover, analogous results as in  \cite{FS1981} were obtained for strongly pseudoconvex domains $ \Omega $ when $ M $ is non-hyperbolic. Behrens' proof in the case when $ M $ is hyperbolic crucially used Pinchuk's scaling techniques and the fact that the model domain at a strongly pseudoconvex boundary point is the ball. Related work on the union problem can be found in \cite{DS} and \cite{Liu}.

\medskip

The first objective of this work is to study the union problem for a broader class of domains $ \Omega $, more specifically the Levi corank one domains in $\mf{C}^n$, smoothly bounded convex domains in $\mf{C}^n$, the strongly pseudoconvex polyhedral domains in $\mf{C}^2$, the minimal ball, simply connected domains in $\mf{C}^2$ with generic piecewise smooth Levi-flat boundaries, and the symmetrized polydisc. The definitions of all of them will be given later in Section~3. Note that no assumption is made about the quotient $ \Omega/ \Aut(\Omega) $ and this is indeed non-compact also for the minimal ball, whose boundary is not smooth but is devoid of nontrivial analytic varieties; such non-smooth convex domains are dealt with here as well. When $\Om$ is one of the first four, then (it is well known and will be explained later that) $\Om$ is amenable to scaling, which when applied to a sequence $\{p^j\}$ in $\Om$ converging to a boundary point $p^0$, yields a limit domain $\Om_{\infty}$. When $\Om$ is a Levi corank one domain or a convex finite type domain, then $\Om_{\infty}$ turns out to be a polynomial domain. Before moving on, a word about notation -- the use of the same symbol $\Omega_{\infty}$ to denote the limit domains in all these cases will not lead to any confusion since each of the various classes of domains listed above will be handled separately later.

\begin{thm} \label{T1}
Assume that in the union problem, $ M$ is a hyperbolic manifold. 
\begin{enumerate}
\item[(i)] If $ \Omega \subset \mf{C}^n $ is a bounded Levi corank one domain, then $ M $ is biholomorphic either to $ \Omega $ or to a limiting domain of the form
\begin{equation*}
 \Omega_{\infty}=\Big\{ z \in \mathbb{C}^n : 2 \Re z_n + P_{2m} \left(z_1, \overline{z}_1 \right) + \sum_{j=2}^{n-1} \vert z_j \vert^2 < 0 \Big\},
\end{equation*}
where $m\geq 1$ is a positive integer and $ P_{2m} \left(z_1, \overline{z}_1 \right) $ is a subharmonic polynomial of degree at most $ 2m $ without any harmonic terms.
 
\item[(ii)] If $ \Omega \subset \mf{C}^n $ is a smoothly bounded convex domain
then $ M $ is biholomorphic either to $ \Omega $ or to a limiting domain $\Om_{\infty}$ associated to $\Omega$. 

\item[(iii)] If $n=2$ and $ \Omega \subset \mf{C}^2$ is a strongly pseudoconvex polyhedral domain, then $ M $ is biholomorphic either to $ \Omega $ or to $ \Omega_{\infty}$, where $ \Omega_{\infty} $ is a limiting domain associated to $ \Omega $.

\item[(iv)] Let $ \Omega =\left\{z \in \mf{C}^n: \frac{1}{2}\left( \sum_{j=1}^{n} \vert z_j \vert^2 + \big\vert \sum_{j=1}^{n} z_j^2 \big\vert \right)<1\right\}$ be the minimal ball {or more generally any bounded convex domain whose boundary (is not necessarily smooth but) does not contain nontrivial complex analytic varieties. If $M$ is a priori known to be complete hyperbolic, then $M \simeq \Omega$}.

\item[(v)] If $n=2$ and $ \Omega \subset \mf{C}^2 $ is a simply connected domain with generic piecewise $ C^{\infty}$-smooth Levi-flat boundary, then $ M$ is biholomorphic either to $ \Omega $ or to the unit bidisc $ \Delta^2 \subset \mf{C}^2$.

\item[(vi)] If $M$ is a bounded domain in $\mf{C}^n$ and $ \Omega$ is the symmetrized polydisc, then either $ M $ is biholomorphic to $ \Omega $ or $ M $ admits a proper holomorphic correspondence to $ \Delta^n $, the unit polydisc in $ \mathbb{C}^n $, with each fibre having cardinality at most $ n! $.
 
\end{enumerate}
\end{thm}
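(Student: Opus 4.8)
The plan is to treat all six parts within a single framework in which the asymptotic behaviour of a sequence of base points decides whether $M$ is $\Omega$ itself or a scaled limit of it, the individual domain classes entering only through the choice of scaling. Fix $p \in M$; since the $M_j$ exhaust $M$ we may assume $p \in M_j$ for all large $j$, and we choose biholomorphisms $\phi_j : \Omega \to M_j$ and set $a_j := \phi_j^{-1}(p) \in \Omega$. Because $M_j \subset M_{j+1} \subset M$, the Kobayashi metrics decrease and $F_{M_j} \downarrow F_M$ pointwise, while $M$ being hyperbolic forces $F_M(p,\cdot)$ to be strictly positive on $T_pM \setminus \{0\}$. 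The entire argument then splits according to whether $\{a_j\}$ stays in a compact subset of $\Omega$ or escapes to $\partial\Omega$.

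First I would dispose of the \emph{interior case}: after passing to a subsequence, suppose $a_j \to a_0 \in \Omega$. Then $\{\phi_j^{-1}\}$ maps into the fixed bounded hyperbolic domain $\Omega$, hence is a normal family on compacta of $M$, and a subsequential limit $\Phi : M \to \overline{\Omega}$ exists. The positive lower bound on $F_M(p,\cdot)$ together with $F_\Omega(a_j,\cdot) \to F_\Omega(a_0,\cdot) < \infty$ keeps $d\Phi$ non-degenerate and prevents $\Phi(M)$ from reaching $\partial\Omega$; simultaneously $\{\phi_j\}$ subconverges to $\Psi : \Omega \to M$, and one checks that $\Phi$ and $\Psi$ are mutually inverse, so $M \simeq \Omega$, exactly as in Forn\ae ss--Sibony.

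The substance of the theorem is the \emph{boundary case} $a_j \to p^0 \in \partial\Omega$, where $F_\Omega(a_j,\cdot)$ blows up and the naive limit degenerates; here I would invoke the scaling tailored to each class. In (i)--(iii) one has polynomial or affine dilations $T_j$ --- the Pinchuk-type scaling at a Levi corank one point in (i), the affine scaling of convex domains in (ii), and the edge/face scaling for polyhedral domains in (iii) --- for which $T_j(\Omega)$ converges in the local Hausdorff sense to a model $\Omega_\infty$; in (i) the structure of the defining function at a Levi corank one point produces precisely a model $2\Re z_n + P_{2m}(z_1,\overline{z}_1) + \sum_{j=2}^{n-1}|z_j|^2 < 0$ with $P_{2m}$ subharmonic and free of harmonic terms. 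The rescaled maps $\psi_j = T_j \circ \phi_j^{-1}$ again form a normal family, and I would extract a limit $\Psi_\infty : M \to \Omega_\infty$ and show it is a biholomorphism: non-degeneracy of $d\Psi_\infty$ from the lower bound on $F_M$, injectivity via a Hurwitz argument, and surjectivity together with properness from the kernel convergence $T_j(\Omega)\to\Omega_\infty$ coupled with the exhaustion. For (v), the generic Levi-flat pieces of $\partial\Omega$ force the scaled model to be the bidisc $\Delta^2$, giving the stated alternative. Cases (iv) and (vi) are handled differently: in (iv) the absence of nontrivial analytic varieties in the boundary of the convex domain $\Omega$ makes it complete hyperbolic and taut, and together with the a priori completeness of $M$ this rigidity forces $M \simeq \Omega$ with no scaled alternative surviving; in (vi) I would exploit the realization $\Omega = \Delta^n / S_n$, lift each $\phi_j$ to an $n!$-sheeted branched cover of $\Delta^n$, and pass to the limit to obtain a proper holomorphic correspondence $M \leftrightarrow \Delta^n$ with fibres of cardinality at most $n!$.

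The main obstacle is precisely the verification, in the boundary case, that the scaled limit $\Psi_\infty$ is a genuine biholomorphism onto $\Omega_\infty$. This rests on \emph{stability of the Kobayashi metric under scaling}, namely that $F_{T_j(\Omega)}$ converges to $F_{\Omega_\infty}$ with enough uniform control, which is delicate here since no compactness of $\Omega/\Aut(\Omega)$ is assumed and, for the minimal ball, $\partial\Omega$ is not even smooth. Establishing non-degeneracy and properness of $\Psi_\infty$ --- so that $\Omega_\infty$ is itself hyperbolic and the map is onto --- is where the bulk of the class-specific analysis (estimates on defining functions, localisation of the Kobayashi metric, and tautness of the models) will be concentrated.
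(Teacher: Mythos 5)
Your overall architecture for (i)--(iii) -- the dichotomy between the orbit $\{a_j\}$ staying compact in $\Omega$ versus escaping to $\partial\Omega$, class-specific scalings producing a taut model $\Omega_\infty$, and stability of the Kobayashi metric as the crux -- is exactly the paper's strategy, and your interior-case and symmetrized-polydisc arguments are essentially theirs (the latter formalized via Klingenberg--Pinchuk normality for proper correspondences). But there are genuine gaps in (iv) and (v). For (v) you propose that ``the generic Levi-flat pieces of $\partial\Omega$ force the scaled model to be the bidisc,'' yet no scaling is available here: the domain is not assumed convex, and the paper explicitly notes that for non-convex domains with generic piecewise smooth Levi-flat boundary ``scalings do not seem plausible.'' Instead it follows Fu--Wong's $c/k$-invariant (quotient of the Carath\'eodory and Kobayashi--Eisenman volume forms), proving a new stability lemma -- for an increasing exhaustion $X_j \nearrow X$ one has $c_{X_j} \to c_X$ and $k_{X_j} \to k_X$ -- to transfer the estimate $C_{M_j}(z^0)/K_M(z^0) \geq 1$ to $M$ and conclude $M \simeq \Delta^2$. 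Your sketch of (v) would not compile into a proof without this replacement. Similarly, in (iv) the assertion that tautness of $\Omega$ plus complete hyperbolicity of $M$ ``forces $M \simeq \Omega$ with no scaled alternative surviving'' is a claim, not a mechanism: in the boundary case one still must produce a biholomorphism, and the paper's device is a Frankel-type intrinsic scaling $A_j = [d\psi^j(z^0)]^{-1}$ (following Kim), with substantial work -- the midpoint-map identity controlling $\|d^2\omega_j\|$ by $\|d\omega_j\|$, blow-up of the eigenvalues of $d\psi^j(z^0)$ from the no-analytic-discs hypothesis, and complete hyperbolicity of $M$ to get local Hausdorff convergence of the images -- to show the scaled limit $\hat\Omega$ is \emph{biholomorphic to $\Omega$ itself}; only then does the general machinery give $M \simeq \Omega_\infty \simeq \Omega$.

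A further gap sits in your boundary-case endgame for (i)--(iii). Your injectivity route (nondegeneracy of $d\Psi_\infty$ from the lower bound on $F_M$ plus metric stability, then a several-variables Hurwitz argument) is a workable alternative to the paper's integrated-distance inequality $d_M(z^1,z^2) \leq d_{\Omega_\infty}(\tilde\psi(z^1),\tilde\psi(z^2))$. But ``surjectivity together with properness from the kernel convergence'' elides what the paper identifies as the principal difficulty: $M$ is hyperbolic but not known to be taut, so the backward family $\tilde\phi^j = (\tilde\psi^j)^{-1} : \Omega_j \to M$ has no a priori subsequential limit at all. The paper's resolution depends on the \emph{order} of the steps: injectivity is proved first, so that $M$ may be identified with the subdomain $\tilde\psi(M)$ of the taut $\Omega_\infty$; only then is $\{\tilde\phi^j\}$ normal (with compact divergence excluded by $\tilde\phi^j(q^j) = z^0$), and a Hurwitz/openness argument plus $\tilde\psi \circ \tilde\phi = \mathrm{id}$ on $\Omega_\infty$ yields surjectivity. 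Without this embedding trick, or some substitute for the missing tautness of $M$, your surjectivity step does not close.
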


\begin{thm} \label{T2}
Assume that in the union problem, $ M $ is a non-hyperbolic manifold. Then under any of the hypothesis as in Theorem~\ref{T1} (i)-(iii),
\begin{enumerate}

\item[(i)] The dimension of $\{v\in T_pM: F_M(p,v)=0\}$ is independent of $p$.

\item [(ii)] The zero set of $F_M$ is a vector bundle over $ M $. 

\item [(iii)] If the corank of $F_M$ is one, then $ M $ is biholomorphic to a locally trivial holomorphic fibre
bundle with fibre $ \mathbb{C} $ over a retract of $ \Omega $ or that of a limiting domain $ \Omega_{\infty} $ associated to $\Om$.
\end{enumerate}
\end{thm}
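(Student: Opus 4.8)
The plan is to reduce all three assertions to the Forn\ae ss--Sibony structure theorem \cite{FS1981} by transporting the problem, through the scaling that already underlies Theorem~\ref{T1}, to a \emph{hyperbolic model} domain $W\in\{\Om,\Om_\infty\}$ on which the Kobayashi metric is under control. First I would fix a base point $p_0\in M_1$, biholomorphisms $\sigma_j\colon M_j\ra\Om$, and set $a_j=\sigma_j(p_0)$. As in the proof of Theorem~\ref{T1}, either $\{a_j\}$ stays in a compact subset of $\Om$, in which case one extracts limits directly and $W=\Om$, or $a_j\ra p^0\in\pa\Om$ and one applies the scaling maps $T_j$ adapted to $a_j$ (affine or polynomial, according as $\Om$ is convex, Levi corank one, or a polyhedral domain in $\mbb C^2$) so that $T_j(\Om)\ra\Om_\infty=:W$. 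Writing $\Phi_j=T_j\circ\sigma_j$, normality yields a holomorphic limit $F\colon M\ra W$ of $\Phi_j$ and a holomorphic limit $G\colon W\ra M$ of $\Phi_j^{-1}$; here $W$ is the relevant hyperbolic model, and the non-hyperbolicity of $M$ is forced to reside entirely in $\ker dF$.

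The organizing identity I would establish is
\begin{equation*}
F_M(p,v)=F_W\big(F(p),dF_p(v)\big),\qquad (p,v)\in TM.
\end{equation*}
This follows because $F_{M_j}=\sigma_j^*F_\Om=\Phi_j^*F_{T_j(\Om)}$ by biholomorphic invariance of the Kobayashi metric, because $F_{M_j}\searrow F_M$ as $M=\bigcup_j M_j$ is an increasing union, and because $F_{T_j(\Om)}(\Phi_j(p),\cdot)\ra F_W(F(p),\cdot)$ by the convergence of Kobayashi metrics under the scaling established in the course of proving Theorem~\ref{T1}. Since $W$ is hyperbolic, $F_W(w,u)=0$ if and only if $u=0$, so the null space $K_p:=\{v\in T_pM:F_M(p,v)=0\}$ equals $\ker dF_p$, a complex-linear subspace of $T_pM$; hence the corank of $F_M$ at $p$ is $n-\operatorname{rank}dF_p$, and part~(i) becomes the assertion that the holomorphic map $F$ has constant rank.

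The crux, which replaces the compactness of $\Om/\Aut(\Om)$ used by Forn\ae ss--Sibony, is this constancy, and I expect it to be the main obstacle. The rank of $F$ is lower semicontinuous and can drop only on a proper analytic subset, while the corank of $F_M$ is upper semicontinuous because $F_M$ is continuous; to upgrade these to constancy I would exploit the homogeneity inherent in the union problem. For $\alpha\in\Aut(\Om)$ the maps $\sigma_j^{-1}\circ\alpha\circ\sigma_j$ are automorphisms of $M_j$ and so preserve $F_{M_j}$; passing to the limit along the scaling should produce $F_M$-preserving self-maps of $M$ that move $p_0$ throughout a dense subset. Because the (scaled) automorphism groups of the models $W$ --- translations in $\operatorname{Im}z_n$ and in the elliptic directions, together with the anisotropic dilations --- act with relatively compact quotient on the pertinent boundary orbit, I expect these limiting maps to be genuine biholomorphisms of $M$; since corank is a biholomorphic invariant, constancy would follow, proving~(i). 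The constant-rank theorem then exhibits $K=\bigsqcup_p\ker dF_p$ as a holomorphic subbundle of $TM$, giving~(ii). The delicate point, to be checked separately for each of the three classes, is precisely that the recentering limits really are automorphisms of $M$.

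For part~(iii) I would argue as follows. Assuming the corank is $1$, the map $F\colon M\ra W$ has constant rank $n-1$, so $Z:=F(M)$ is an $(n-1)$-dimensional complex submanifold of $W$; the composite $r:=F\circ G\colon W\ra W$ is then a holomorphic retraction onto $Z$ (in the corank-zero case $r=\mathrm{id}_W$ and one recovers $M\simeq W$ from Theorem~\ref{T1}), which realizes $Z$ as a holomorphic retract of $W\in\{\Om,\Om_\infty\}$. Each fibre $F^{-1}(z)$ is a connected one-dimensional complex manifold on which $F_M$ vanishes identically; being non-hyperbolic, and simply connected and non-compact after a separate check, it must be biholomorphic to $\mbb C$. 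Trivializing the kernel line bundle $K$ by a local holomorphic frame and flowing along it over coordinate balls in $Z$ would finally show that $F\colon M\ra Z$ is a locally trivial holomorphic fibre bundle with fibre $\mbb C$, which is the assertion of~(iii).
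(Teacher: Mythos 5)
Your skeleton (the compact/non-compact orbit dichotomy, the scaled limit $F=\ti\psi$, the metric identity $F_M(p,v)=F_W\big(F(p),dF_p(v)\big)$ obtained from biholomorphic invariance plus stability of the Kobayashi metric under scaling, and corank $=$ nullity of $dF_p$) matches the paper. But there are two genuine gaps. First, you posit a holomorphic limit $G\colon W\to M$ of the inverses $\Phi_j^{-1}$ ``by normality''. Since $M$ is non-hyperbolic --- the standing hypothesis of the theorem --- families of maps into $M$ need not be normal, and no such $G$ is available; in the hyperbolic case the paper could extract a limit of the inverses only after identifying $M$ with $\ti\psi(M)\subset\Om_\infty$ and so embedding it in a taut manifold, a device that fails here. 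The paper never takes this limit: it instead forms the compositions $\ti\al^j:=\ti\psi\circ\ti\phi^j\colon\Om_j\to\Om_\infty$, which map into the \emph{taut} limit domain, extracts $\ti\al=\lim\ti\al^j$ with $\ti\al\circ\ti\psi=\ti\psi$ and $\ti\al\circ\ti\al=\ti\al$ (from $\ti\al\big(\ti\al^j(w)\big)=\ti\al^j(w)$), and takes $Z$ to be the fixed-point set of $\ti\al$. Your retraction $r=F\circ G$ is exactly this $\ti\al$, but your derivation routes through an object that need not exist; it must be replaced by the composed limit.

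Second, and more seriously, your proposed proof of the constancy of rank --- conjugating automorphisms of $\Om$ by the $\sigma_j$ and invoking that the model's automorphism group acts ``with relatively compact quotient'' --- rests on symmetry that is simply not present. The paper makes no assumption on $\Om/\Aut(\Om)$ (indeed it emphasizes this; a generic Levi corank one, convex, or polyhedral $\Om$ has trivial automorphism group), and the limit domains, e.g.
\[
\Om_\infty=\Big\{z\in\mathbb{C}^n: 2\Re z_n+P_{2m}(z_1,\ov z_1)+\textstyle\sum_{j=2}^{n-1}|z_j|^2<0\Big\}
\]
with $P_{2m}$ an arbitrary subharmonic polynomial without harmonic terms (in general not weighted-homogeneous), are not homogeneous and admit no cocompact group action; so the mechanism you call the crux would fail for precisely the classes covered by the theorem. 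The actual substitute for the Forn\ae ss--Sibony cocompactness hypothesis is the tautness of the model, already secured by Proposition~\ref{k-stability}: once $\ti\al$ is in hand, the statements that $Z=\ti\psi(M)$ is a closed connected submanifold, that $\ti\al$ is a holomorphic retraction onto $Z$, and that $\ti\psi$ has \emph{constant} rank all follow by the verbatim arguments of Lemmas 4.1--4.4 of \cite{FS1981}, with no symmetry input whatsoever; parts (ii) and (iii) are then quoted from \cite{FS1981} as you anticipate (your sketch of the fibre $\mathbb{C}$ and local triviality is consistent with what is cited there). As written, though, your (i) rests on a strategy that cannot work and your (iii) on a limit map that need not exist.
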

Several remarks are in order. The dichotomy in Theorem~\ref{T1} that $ M $ is biholomorphic either to $ \Omega $ or to a domain $ \Omega_{\infty} $ arises from the two cases that need to be considered -- if $ \psi^j : M_j \rightarrow \Omega $ are given biholomorphisms and $ z^0 \in M $ is a given fixed point then -- first, when the orbit $ \{ \psi^j (z^0) \} $ is compactly contained in $ \Omega $ and second, when some subsequence of $ \{ \psi^j (z^0) \} $ (which we continue to denote by the same symbols) accumulates at a boundary point $ p^0 \in \partial \Omega $. In the latter case, scaling the domain $ \Omega $ along $ \{ \psi^j (z^0) \} $ yields a sequence of domains $ \Omega_j $ that converge to $ \Omega_{\infty} $ in the local Hausdorff sense. Observe that the scaled domains $ \Omega_j $, thereby their limit $\Omega_\infty$ in particular, depends only on the geometry of $\partial \Omega$ near $p^0$. This allows us to extend Theorem~\ref{T1} with apparently far less assumptions as in Theorem~\ref{T3} mentioned in the last section.

\medskip

Next, notice that $ \partial \Omega $ can be \textit{a priori} be of infinite type in Theorem~\ref{T1} (ii). Therefore, for smoothly bounded convex domains $ \Omega $, in the case $ \{ \psi^j (z^0) \} $ converges to $ p^0 \in \partial \Omega $, there are further two possibilities 
to be analysed---(I) $ \partial \Omega $ is of finite type near $ p^0 $, and (II) $ \partial \Omega $ is of infinite type near $ p^0 $. As before, the general strategy is to scale the domain $ \Omega $ with respect to the sequence $ \{ \psi^j (z^0) \} $ to get rescaled domains 
$ \Omega_j $. The associated limiting domain $ \Omega_{\infty} $ is hyperbolic convex in both the cases (1) and (2).

\medskip

Next, it follows from Theorem \ref{T1} that the symmetrized polydisc $ \mathbb{G}_n $ cannot exhaust a strongly pseudoconvex domain $D$. If it did, then either $D$ would be biholomorphic to $ \mathbb{G}_n $ or there would be a proper correspondence between $D$ and $ \Delta^n $---both these scenarios  would lead to a contradiction (see \cite{Huc-Orm78}, \cite{Ris64}).

\begin{cor}
A $ C^2 $-smooth strongly pseudoconvex domain $ D \subset \mathbb{C}^n $ cannot be the union of an increasing sequence of open subsets $ D_j $, each of which is biholomorphic to the symmetrized polydisc $ \mathbb{G}_n $.
\end{cor}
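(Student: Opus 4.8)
The plan is to prove the corollary by contradiction, the key input being Theorem~\ref{T1}(vi). First I would check that its hypotheses are met: a bounded $C^2$-smooth strongly pseudoconvex domain $D \subset \mathbb{C}^n$ is in particular a bounded domain, and every bounded domain is Kobayashi hyperbolic; hence $D$ is an admissible choice for the manifold $M$, and the extra hypothesis in part (vi) that $M$ be a bounded domain also holds. So suppose, towards a contradiction, that $D=\bigcup_j D_j$ with each $D_j$ biholomorphic to the symmetrized polydisc $\Omega = \mathbb{G}_n$ (here $n \geq 2$, the only case of interest, since $\mathbb{G}_1 = \Delta$). Theorem~\ref{T1}(vi) then forces one of two alternatives: either (a) $D \simeq \mathbb{G}_n$, or (b) $D$ admits a proper holomorphic correspondence onto the polydisc $\Delta^n$ with each fibre of cardinality at most $n!$. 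It remains to rule out both.

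To dispose of alternative (a), I would exploit the symmetry of $\mathbb{G}_n$. The automorphism group $\Aut(\mathbb{G}_n)$ is non-compact --- it contains the family of maps induced on the symmetrized polydisc by the (non-compact) group $\Aut(\Delta)$ acting diagonally on $\Delta^n$. A biholomorphism $D \simeq \mathbb{G}_n$ would transfer this non-compactness to $\Aut(D)$. But $D$ is a bounded domain with $C^2$-smooth strongly pseudoconvex boundary, so by the Wong--Rosay theorem a non-compact automorphism group forces $D$ to be biholomorphic to the unit ball $\mathbb{B}^n$. Combining, we would get $\mathbb{G}_n \simeq \mathbb{B}^n$, which is false: for $n \geq 2$ the symmetrized polydisc is well known to be inequivalent to the ball (indeed, it is not biholomorphic to any smoothly bounded convex domain). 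This contradiction eliminates (a), in agreement with \cite{Huc-Orm78}.

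To dispose of alternative (b), I would argue that a strongly pseudoconvex domain cannot carry a proper holomorphic correspondence onto the polydisc $\Delta^n$ when $n \geq 2$. The obstruction is a clash of boundary geometries: the boundary of $\Delta^n$ is, along each of its faces, foliated by nontrivial analytic discs, whereas the $C^2$-smooth strongly pseudoconvex boundary $\partial D$ contains no analytic discs and consists entirely of peak points. Invoking the boundary-extension and reflection theory for proper holomorphic correspondences, the analytic structure present in $\partial \Delta^n$ would be transported into $\partial D$, which is impossible; this non-existence is what \cite{Ris64} supplies. With both (a) and (b) excluded, the assumed exhaustion cannot exist, proving the corollary.

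I expect alternative (b) to be the main obstacle. Case (a) reduces cleanly to two standard facts, namely the Wong--Rosay theorem and the non-equivalence of $\mathbb{G}_n$ with the ball. Case (b), by contrast, concerns a possibly branched, multivalued object, so the delicate point is to ensure that a proper correspondence between $D$ and $\Delta^n$ extends suitably to the boundary and genuinely forces analytic discs into $\partial D$. Making this transfer of boundary geometry rigorous --- rather than merely plausible --- is the technical heart of the argument, and it is precisely here that the cited work on proper correspondences does the essential lifting.
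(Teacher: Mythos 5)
Your proposal is correct, and its skeleton is exactly the paper's: the paper's entire proof is the remark preceding the corollary, namely that Theorem~\ref{T1}(vi) applies (boundedness of $D$ gives hyperbolicity, as you checked) and yields the dichotomy $D \simeq \mathbb{G}_n$ or a proper holomorphic correspondence $D \to \Delta^n$, with both horns contradicted by citing \cite{Huc-Orm78} and \cite{Ris64}.

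Where you genuinely diverge is in how alternative (a) is excluded. The paper's intended route via \cite{Huc-Orm78} is more economical: if $D \simeq \mathbb{G}_n$, then composing the biholomorphism with the symmetrization map $\pi\vert_{\Delta^n} : \Delta^n \to \mathbb{G}_n$ produces a proper holomorphic map from the polydisc onto the $C^2$-smooth strongly pseudoconvex domain $D$, and Huckleberry--Ormsby rule out precisely such maps --- one step, no automorphism theory. You instead use the non-compactness of $\Aut(\mathbb{G}_n)$ (correct: the diagonal M\"obius action induces automorphisms pushing interior orbits to $\partial \mathbb{G}_n$), the Wong--Rosay theorem (Rosay's version \cite{Rosay} indeed needs only $C^2$ strong pseudoconvexity at an orbit accumulation point, and for bounded domains non-compactness of the automorphism group does yield such a point) to force $D \simeq \mathbb{B}^n$, and then the known inequivalence $\mathbb{G}_n \not\simeq \mathbb{B}^n$. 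This is valid but slightly circuitous, since the cleanest proof of that inequivalence for $n \geq 2$ is itself the nonexistence of proper maps $\Delta^n \to \mathbb{B}^n$ --- the same Huckleberry--Ormsby-type input the paper spends once to kill (a) outright; your version buys self-containedness at the level of quotable textbook theorems. For alternative (b) you and the paper are in identical positions: both defer the boundary-extension and disc-propagation theory for proper correspondences onto $\Delta^n$ to the literature (\cite{Ris64}), and your identification of the mechanism --- every boundary face of $\Delta^n$ is foliated by analytic discs, while a $C^2$-smooth strongly pseudoconvex boundary carries none --- is the right one; the paper offers no more detail there than you do.
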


Let us now take $\Om \subset \mf{C}^2$ to be a $C^{\infty}$-smoothly bounded strongly convex domain. Theorem~\ref{T1} then shows that $M $ is biholomorphic to either $ \Om$ or $\mf{B}^2$ in case $M$ is assumed to be hyperbolic. On the other hand, if $M$ is non-hyperbolic and the corank of $F_M$ is one, then by Theorem~\ref{T2}, $ M $ is biholomorphic to a locally trivial holomorphic fibre bundle with fibre $ \mathbb{C} $ over a retract $ Z $ of $ \Omega $ or $\mf{B}^2$ of dimension one. By Lempert's work \cite{Lem82}, one-dimensional holomorphic retracts in strongly convex domains are exactly the extremal maps for the Kobayashi metric. Hence, there is a holomorphic map $f : \De \to \Om$ or $f : \De \to \mf{B}^2$ such that $f(\De)=Z$ and $f$ is an extremal map for the Kobayashi metric. By Lempert's work again \cite{Lem81}, $f$ is a complex geodesic and hence is an embedding, and thus $Z $ is biholomorphic to $ \De$. In particular, $Z$ admits solutions to both the additive and multiplicative Cousin problems, and thus by the arguments used in the proving Corollary 4.8 of \cite{FS1981}, it follows that $M $ is biholomorphic to $ \De \times \mf{C}$. This replaces the computation in Lemma~4.7 of \cite{FS1981} wherein $Z$ is identified in case $\Om $ is $\mf{B}^n$ or $\De^n$. It is however rare for a domain to admit either holomorphic retracts or Kobayashi extremals. And in fact, it is quite surprising that $\mf{B}^n$ admits holomorphic retracts of all possible codimensions---see \cite{Suf74}.
\begin{cor}\label{sc-dim2}
Assume that in the union problem, $n=2$ and $\Om \subset \mf{C}^2$ is a $C^\infty$-smoothly bounded strongly convex domain. If $M$ is hyperbolic, then $M $ is biholomorphic to either $ \Om$ or $\mf{B}^2$. If $M$ is non-hyperbolic and the corank of $F_M$ is one, then $M$ is biholomorphic to $ \De \times \mf{C}$.
\end{cor}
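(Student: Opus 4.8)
The plan is to derive both assertions directly from Theorems~\ref{T1} and \ref{T2}, the only genuinely new input being the identification of the one-dimensional retract in the non-hyperbolic case via Lempert's theory. For the hyperbolic case I would first observe that a $C^\infty$-smoothly bounded strongly convex domain $\Om \subset \mf{C}^2$ is in particular a smoothly bounded convex domain, so that Theorem~\ref{T1}(ii) applies and gives that $M$ is biholomorphic either to $\Om$ or to a limiting domain $\Om_\infty$ obtained by scaling along an orbit $\{\psi^j(z^0)\}$ that accumulates at a boundary point $p^0 \in \pa\Om$. Since strong convexity forces every boundary point to be strongly pseudoconvex, the scaled domains at such a $p^0$ converge to $\mf{B}^2$; hence $\Om_\infty = \mf{B}^2$ and $M \simeq \Om$ or $M \simeq \mf{B}^2$.

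For the non-hyperbolic corank-one case, Theorem~\ref{T2}(iii) yields that $M$ is biholomorphic to a locally trivial holomorphic fibre bundle with fibre $\mf{C}$ over a one-dimensional holomorphic retract $Z$ of $\Om$ (or of the associated limiting domain $\mf{B}^2$, which is again strongly convex). The next step is to pin down $Z$. By Lempert's work \cite{Lem82}, in a strongly convex domain the one-dimensional holomorphic retracts are exactly the images of complex geodesics, i.e.\ the extremal discs for the Kobayashi metric; thus there is a holomorphic extremal $f : \De \to \Om$ (or $f : \De \to \mf{B}^2$) with $f(\De) = Z$. Invoking \cite{Lem81} again, such an $f$ is an embedding, so $Z \simeq \De$.

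It then remains to trivialize the bundle. Because $Z \simeq \De$ is a one-dimensional Stein manifold that is contractible, its Picard group vanishes and both the additive and multiplicative Cousin problems are solvable on it. The structure group of the bundle is $\Aut(\mf{C}) = \{z \mapsto az + b : a \in \mf{C}^*,\ b \in \mf{C}\}$, an extension of $\mf{C}^*$ by $\mf{C}$: solving the multiplicative Cousin problem trivializes the $\mf{C}^*$-part (the associated line bundle), and solving the additive Cousin problem then removes the residual translation cocycle. This is precisely the mechanism used to prove Corollary~4.8 of \cite{FS1981}, and it delivers $M \simeq Z \times \mf{C} \simeq \De \times \mf{C}$.

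The hard part will be the identification of $Z$: Theorem~\ref{T2} only produces an abstract one-dimensional retract, and in \cite{FS1981} this step rested on an explicit computation (Lemma~4.7) available only because $\Om$ was the ball or the polydisc. The decisive simplification here is that Lempert's characterization of retracts in strongly convex domains both dispenses with any explicit model and simultaneously forces $Z \simeq \De$; once that is in hand, the Cousin-theoretic trivialization is routine. I would therefore expect the write-up to be short, with essentially all of the content concentrated in correctly citing the reduction to \cite{Lem81,Lem82} and verifying that the limiting domain in the strongly convex setting is indeed $\mf{B}^2$.
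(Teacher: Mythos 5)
Your proposal is correct and takes essentially the same route as the paper: Theorem~\ref{T1}(ii) together with the observation that scaling a strongly convex domain at a boundary point yields $\mf{B}^2$ handles the hyperbolic case, and in the corank-one case the paper likewise applies Theorem~\ref{T2}(iii), identifies the one-dimensional retract $Z$ via Lempert's results (\cite{Lem82} for retracts as Kobayashi extremals, \cite{Lem81} for $f$ being a complex geodesic and hence an embedding, so $Z \simeq \De$), and then trivializes the $\mf{C}$-bundle by the Cousin-problem argument of Corollary~4.8 of \cite{FS1981}. Your added detail on the affine structure group $\Aut(\mf{C})$ and the multiplicative/additive Cousin steps is exactly the mechanism the paper invokes by citation.
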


Observe that if $\Om \subset \mf{C}^2$ is a $C^{\infty}$-smooth small perturbation of $\mf{B}^2$, then $\Om$ is a $C^{\infty}$-smoothly bounded strongly convex domain that is not biholomorphic to $ \mf{B}^2$. Hence, Corollary \ref{sc-dim2} generalises the above mentioned result of \cite{FS1981} in dimension two. 

\medskip

As to what happens in case $\Omega$ is not strongly convex, particularly in the case when $M$ is not hyperbolic seems rather challenging in 
general. To study a simple case not covered by the convex domains already dealt with by the above corollary or Theorem~\ref{T2}, we take $\Omega = \Delta \times \mathbb{B}^{n-1}$. By the main theorem of \cite{FS1981}, describing $M$ leads to determining the retracts of $\Delta \times \mathbb{B}^{n-1}$. The following result gives a description of all possible holomorphic retracts of $ \Delta \times \mathbb{B}^{n-1} $. Note that it suffices to determine the retracts of $ \Delta \times \mathbb{B}^{n-1}$ that contain the origin, from which the general case follows using the automorphisms of $ \Delta \times \mathbb{B}^{n-1} $. Recall that ${\rm Aut}(\Delta \times \mathbb{B}^{n-1}) $ is isomorphic to $ {\rm Aut}(\Delta) \times {\rm Aut}(\mathbb{B}^{n-1})$ and hence every automorphism of $ \Delta \times \mathbb{B}^{n-1}$ is a linear fractional transformation. 

\begin{thm}\label{retracts}
Let $ Z $ be a holomorphic retract of $ \Delta \times \mathbb{B}^{n-1}$. Assume that $ Z $ contains the origin, 
$ Z \neq \{0\} $ and $ Z \neq \Delta \times \mathbb{B}^{n-1}$. Then $ Z $ is given as one of the following:

\begin{itemize}
 \item [(i)] $ Z=\left \{ (w,c_1w,\ldots, c_{n-1} w) : w \in \Delta \right\} $ where  $(c_1,c_2, \ldots,c_{n-1}) \in \partial \mathbb{B}^{n-1}$,

\item [(ii)] $Z$ is the graph of a $\mathbb{B}^{n-1}$-valued holomorphic mapping of $\Delta$,

\item [(iii)] $Z$ is the graph of a $\Delta$-valued holomorphic function over a complex linear subspace of $ \mathbb{B}^{n-1}$,

\item [(iv)] $Z$ is the intersection of a linear subspace with $ \Delta \times \mathbb{B}^{n-1}$ of complex dimension at least two.
\end{itemize} 
\end{thm}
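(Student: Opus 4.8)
The plan is to exploit that $D := \Delta \times \mathbb{B}^{n-1}$ is a bounded convex domain, so its Kobayashi distance splits as $k_D = \max(k_\Delta, k_{\mathbb{B}^{n-1}})$, and to combine this with the distance-decreasing property of a retraction. Write the retraction as $\rho = (\rho_1, \rho_2): D \to D$ with $\rho \circ \rho = \rho$ and $Z = \rho(D)$; since $0 \in Z$ and $\rho|_Z = \mathrm{id}$ we have $\rho(0)=0$, and $L := d\rho_0$ is a linear idempotent with $W := \operatorname{Im} L = T_0 Z$. (The reduction to $0 \in Z$ is already granted, and $\Aut(D) = \Aut(\Delta)\times\Aut(\mathbb{B}^{n-1})$ lets one translate the general statement to this case.) The guiding principle is that $Z$ must be filled by complex geodesics: for $p,q \in Z$ and any complex geodesic $\varphi: \Delta \to D$ joining them, $\rho\circ\varphi$ is again a complex geodesic joining $p,q$ and now lies in $Z$.

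First I would classify the complex geodesics of $D$ through $0$. Because $k_D$ is a maximum, a geodesic $\varphi=(\varphi_1,\varphi_2)$ with $\varphi(0)=0$, $\varphi'(0)=(v_1,v_2)$ must have its dominant factor extremal: if $|v_1| > \|v_2\|$ the Schwarz lemma forces $\varphi_1$ to be an automorphism of $\Delta$ (after a rotation, $\varphi_1(\zeta)=\zeta$) while $\varphi_2$ is an essentially free holomorphic map into $\mathbb{B}^{n-1}$; if $\|v_2\| > |v_1|$ then $\varphi_2$ must be a complex geodesic of the ball through $0$, i.e. a diameter $\varphi_2(\zeta)=\zeta u$ with $u \in \partial\mathbb{B}^{n-1}$, while $\varphi_1$ is free; and in the balanced case both components are extremal, giving $\varphi(\zeta)=(\zeta,\zeta u)$. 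The essential subtlety is that $D$ is not strictly convex, so geodesics are \emph{not} unique; however the projection of a geodesic onto its dominant factor \emph{is} uniquely determined (an automorphism of $\Delta$, respectively a diameter of $\mathbb{B}^{n-1}$), and this is what I will actually use. These three types are precisely the one-dimensional instances of (i), (ii) and (iii).

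For $\dim Z = d \ge 2$ I would run a trichotomy on $W = T_0 Z$ according to $K := W \cap (\mathbb{C}\times\{0\})$ and $\pi_1(W)$, where $\pi_1,\pi_2$ are the coordinate projections. If $K = \mathbb{C}\times\{0\}$ then $W = \mathbb{C}\times V$ with $V = \pi_2(W)$; filling in the disc-dominant geodesics tangent to $W$ shows $Z \supseteq \Delta\times(V\cap\mathbb{B}^{n-1})$, and a dimension count forces equality, giving the linear slice $(\mathbb{C}\times V)\cap D$ of case (iv). If $\pi_1(W)=0$ then $W \subseteq \{0\}\times\mathbb{C}^{n-1}$ and the ball-dominant geodesics with trivial disc part give $Z = \{0\}\times(V\cap\mathbb{B}^{n-1})$, again case (iv)/(iii) with $h\equiv 0$. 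The remaining case $K=0$, $\pi_1(W)=\mathbb{C}$ is case (iii): here $\pi_2|_Z$ should be injective, so $Z$ is the graph $\{(h(\zeta),\zeta):\zeta\in\Omega'\}$ of a holomorphic $h:\Omega'\to\Delta$ over $\Omega' := \pi_2(Z)$, and the projected-geodesic principle shows $\Omega'$ contains, with any two of its points, the diameter of $\mathbb{B}^{n-1}$ joining them; being a cone through the origin that is also a submanifold, $\Omega'$ is a linear slice $V\cap\mathbb{B}^{n-1}$. Assembling the three cases (and noting the possibilities for $(\dim\pi_1(W),\dim K)$ are exhausted) yields the list (i)--(iv).

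The main obstacle is the last case. Non-uniqueness of complex geodesics means one cannot simply say ``the geodesic between $p$ and $q$ lies in $Z$''; I expect to spend the real effort on (a) proving global injectivity of $\pi_2|_Z$ --- ruling out two points of $Z$ with the same ball-coordinate, which would produce an in-$Z$ disc whose disc-projection is surjective and would thereby place $\mathbb{C}\times\{0\}$ in a tangent space, contradicting $K=0$ --- and (b) upgrading the infinitesimal and projected information into the global statement that the base $\Omega'$ is exactly a flat ball-slice and that $h$ is globally defined on it. Both steps hinge on carefully exploiting that only the dominant factor's projection of a geodesic is pinned down, and on the classical fact that a complex submanifold of $\mathbb{B}^{n-1}$ through the origin which is a union of diameters must be linear.
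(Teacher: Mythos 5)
Your overall framework (Lempert geodesics in the convex product $\Delta\times\mathbb{B}^{n-1}$, push-forward of geodesics under the retraction, rigidity of the dominant factor) is sound, and it is essentially a geodesic-language version of what the paper actually does, namely Schwarz-type rigidity (including Rudin's Theorem 8.1.3 for the ball factor) along linear discs through the origin inside $L'=T_0Z\cap(\Delta\times\mathbb{B}^{n-1})$, followed by the identity principle. Your one-dimensional classification is correct. The trouble is in the higher-dimensional trichotomy, where one branch is outright false: in your case (b) ($\pi_1(W)=0$) the conclusion $Z=\{0\}\times(V\cap\mathbb{B}^{n-1})$ fails. Take $\rho(w,z)=(z_1^2,z)$, a holomorphic idempotent self-map of $\Delta\times\mathbb{B}^{n-1}$ whose image $Z=\{(z_1^2,z):z\in\mathbb{B}^{n-1}\}$ contains $0$ and has $T_0Z=\{0\}\times\mathbb{C}^{n-1}$, yet $Z\neq\{0\}\times\mathbb{B}^{n-1}$. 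This is exactly the subtlety you flagged yourself: for a ball-dominant geodesic only the $\mathbb{B}^{n-1}$-component of the pushed geodesic is pinned, so nothing forces its disc component to vanish; the correct conclusion in this branch is outcome (iii) with an arbitrary graph function, which is what the paper's Case (iii) delivers by pinning $\pi_2\circ\alpha$ on $L'$ and then exhibiting $Z$ as the graph of $\alpha_0(\beta(\cdot),\cdot)$, in general nonzero. The same defect undermines your case (a): pushed disc-dominant geodesics through $0$ are graphs $\zeta\mapsto(v_1\zeta,h(\zeta))$ with uncontrolled $h$, so their union does not ``fill'' $\Delta\times(V\cap\mathbb{B}^{n-1})$. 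What does work, and is the paper's Case (iv), is to use both families at once: strictly disc-dominant directions in $W$ pin $\pi_1\circ\rho$ to be the first coordinate on an open family of lines in $L'$, strictly ball-dominant directions pin $\pi_2\circ\rho$ on another, and the identity principle on the connected slice $L'$ gives $\rho|_{L'}=\mathrm{id}$, hence $L'\subset Z$ and $Z=L'$ by your dimension count.

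More structurally, $(K,\pi_1(W))$ is the wrong invariant for the case split. Consider $W=\{(2z_1,z_1,z_2):(z_1,z_2)\in\mathbb{C}^2\}\subset\mathbb{C}\times\mathbb{C}^2$: here $K=0$ and $\pi_1(W)=\mathbb{C}$, so it lands in your case (c); but $W$ contains the strictly disc-dominant vector $(1,\tfrac12,0)$ and the strictly ball-dominant vector $(0,0,1)$, so any retract tangent to it would have to be the linear slice $W\cap(\Delta\times\mathbb{B}^{2})$ (the theorem's outcome (iv)), whose $\pi_2$-image $\{z\in\mathbb{B}^2:|z_1|<1/2\}$ is not of the form $V\cap\mathbb{B}^{2}$ --- so your case-(c) target ``graph over a linear slice of the ball'' is unreachable there. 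The correct invariant, as in the paper, is which open faces of $\partial(\Delta\times\mathbb{B}^{n-1})$ the set $T_0Z\cap\partial(\Delta\times\mathbb{B}^{n-1})$ meets; this yields exactly four cases matching outcomes (i)--(iv) (your cases (i)/(ii)/(iii) of dominance then correspond to all-balanced, all weakly disc-dominant, all weakly ball-dominant directions, with the mixed case giving (iv)). Finally, your injectivity argument for $\pi_2|_Z$ in case (c) conflates tangent spaces at other points of $Z$ with $W=T_0Z$: the hypothesis $K=0$ constrains the tangent space at the origin only, so producing a disc in $Z$ whose disc-projection is surjective does not contradict it. I would keep your geodesic formalism for the rigidity inputs but redo the case analysis along the paper's boundary-incidence lines.
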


Let us continue with the observation that identifying retracts is central to the union problem.
 Recall the following facts about retracts: Let $S \subset \Om$ be a topological space and $S$ a retract of $\Om$. By writing $i, r$ for the inclusion and retraction respectively, we get
\[
S \xrightarrow{i} \Om \xrightarrow{r} S
\]
from which $r \circ i=1_S$, and hence $r_{*} i_{*}=1$ at the level of fundamental groups. This shows that
\[
i_{*}: \pi_1(S) \to \pi_1(\Om)
\]
is injective (and $r_*$ is surjective--but we will not need this). Therefore, if $\Om$ is simply connected, then $S$ is also simply connected.

\begin{cor}
For $n=2$, under the hypothesis of Theorem~\ref{T2} and with the additional property that $\pi_1(\Om)=0$, the holomorphic retract $Z \subset \Om$ given by this theorem must be simply connected and hence $Z $ is biholomorphic to $ \De$ (since $\Om$ is bounded).
\end{cor}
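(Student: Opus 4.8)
The plan is to read off the conclusion from the retract fact recorded immediately above the statement together with the uniformization theorem for Riemann surfaces. First I would apply the injectivity of $i_* : \pi_1(Z) \to \pi_1(\Om)$, which holds because $Z$ is a retract of $\Om$. Since $\pi_1(\Om)=0$ by hypothesis, this forces $\pi_1(Z)=0$; as $Z$ is also connected (being the continuous image of the connected domain $\Om$ under the retraction), $Z$ is simply connected.

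Next I would identify the complex-analytic type of $Z$. Theorem~\ref{T2}(iii) exhibits $M$ as a locally trivial holomorphic $\mf{C}$-bundle over $Z$; since $n=2$ so that $\dim_{\mf C} M = 2$ and the fibre $\mf C$ has dimension one, the base $Z$ has complex dimension one. Being a holomorphic retract of $\Om$, $Z$ is moreover a closed complex submanifold of $\Om$ (the image of a holomorphic idempotent is a smooth submanifold), hence a connected one-dimensional complex manifold, i.e. a Riemann surface.

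The uniformization theorem now says that the simply connected Riemann surface $Z$ is biholomorphic to one of $\mf{P}^1$, $\mf C$, or $\De$, and boundedness of $\Om$ eliminates the first two. Indeed $Z$ lies in a bounded region of $\mf{C}^2$: were $Z \simeq \mf{P}^1$ it would be a compact positive-dimensional complex submanifold of $\mf C^2$, whence the coordinate functions restrict to constants and $Z$ collapses to a point, contradicting $\dim Z = 1$; and were $Z \simeq \mf C$ the coordinate functions would restrict to bounded entire functions, again constant by Liouville's theorem, with the same contradiction. Hence $Z \simeq \De$.

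I do not anticipate a substantive obstacle, since the essential input is already isolated in the retract fact and in uniformization. The only points requiring care are the verification that $Z$ is a genuine smooth Riemann surface --- which rests on the general principle that a holomorphic retract is a closed complex submanifold --- and the dimension count $\dim Z = n-1 = 1$ supplied by the $\mf C$-bundle structure of Theorem~\ref{T2}(iii). (The same argument applies verbatim should $Z$ instead be a retract of a limiting domain $\Om_\infty$, provided that domain is simply connected.)
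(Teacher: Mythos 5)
Your proposal is correct and follows essentially the same route as the paper: the injectivity of $i_*:\pi_1(Z)\to\pi_1(\Om)$ for a retract gives $\pi_1(Z)=0$, and then uniformization together with boundedness of $\Om$ (ruling out $\mf{P}^1$ and $\mf{C}$ via the maximum principle and Liouville) yields $Z\simeq\De$. Your explicit dimension count via the $\mf{C}$-bundle structure and the remark that a holomorphic retract is a closed complex submanifold merely make precise what the paper leaves implicit (it obtains $\dim Z = n-\ti k = 1$ directly from the corank-one hypothesis), so there is no substantive difference.
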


This approach circumvents the need for complex geodesics and shows that retracts are always simply connected if $\Om$ is so. In particular, when $\Om \subset \mf{C}^2$ is simply connected, every retract is equivalent to the disc $\De$. This argument can be applied in several cases: egg domains in $\mf{C}^2$ and the symmetrized bidisc (which is known to be contractible) included. Thus, if $\Om \subset \mathbb{C}^2$ (in the union problem) is a simply connected domain and amenable to scaling, then $
M$ is biholomorphic to $\De \times \mf{C}$ if $F_M$ has corank one.

\medskip

Let us now turn to the question: What happens if $\Om$ has non-trivial topology? As an example, let
\begin{equation}\label{ex-1}
\Om=\left\{(z,w) \in \mf{C}^2 : \vert z \vert^2 + \vert w \vert^2 + \frac{1}{\vert w\vert^2}<R \right\}
\end{equation}
for $R>2$. Then $\Om$ is a non-contractible, smooth strongly pseudoconvex domain. Also, $\Om$ deformation retracts to $\Om \cap (\{0\} \times \mf{C})$ which is a planar domain with fundamental group $\mf{Z}$. Hence $\pi_1(\Om)=\mf{Z}$. Theorem~\ref{T2} shows that if this $\Om$ is taken to be the model domain, then in case $F_M$ has corank $1$, $M$ is biholomorphic to a fibre bundle over a retract $Z \subset \Om$ with fibre $\mf{C}$. Now,
\[
i_{*} : \pi_1(Z) \to \pi_1(\Om)=\mf{Z}
\]
is injective and hence $\pi_1(Z)=0$ or $\mf{Z}$. In the former case $Z$ is biholomorphic to $\De$ (in which case we are back to the old situation). When $\pi_1(Z)=\mf{Z}$, then $Z$ is biholomorphic to either the punctured plane $\mf{C}^{*}$, or the punctured disc $\De^{*}$, or an annulus. As $Z$ is bounded, $\mf{C}^{*}$ is not possible. So $Z$ is biholomorphic to either $\De^{*}$ or an annulus. Both these domains admit a solution to the additive and multiplicative cousin problems and so it follows that $M$ is biholomorphic to $Z \times \mf{C}$.

\medskip

When the fundamental group of $\Om \subset \mf{C}^2$ increases in complexity, it is difficult in general to identify $\pi_1(Z)$ and argue as before. But the redeeming feature is that $Z$ is a non-compact Riemann surface and hence admits solutions to both the Cousin problems. So when $\Om \subset \mf{C}^2$ is amenable to scaling, for example, as in Theorem~\ref{T2}, with non-trivial topology, then $M$ is biholomorphic to $Z \times \mf{C}$ if $F_M$ has corank one.

\medskip

We conclude this article with the question if the limit $M$ in the union problem can be some special type of manifolds. First, let $\mf{P}^n$ be the standard $n$-dimensional complex projective space and $M_j \subset \mf{P}^n$ be an increasing sequence of open subsets each of which is biholomorphic to $\mathbb{B}^n$. The question that we would like to answer here is whether it is possible that $M$ be a quasiprojective variety in $\mathbb{P}^n$?

\begin{thm}\label{qp}
Let $M \subset \mf{P}^n$ be a quasiprojective variety that is the union of an increasing sequence of open subsets $M_j \subset \mf{P}^n$, each of which is biholomorphic to a domain $\Om \subset \mf{C}^n$. If  $\Om$ is bounded and $\Om/\Aut(\Om)$ is compact, then $M$ cannot be hyperbolic. Further, if $\Om=\mf{B}^n$ or $\De^n$, then the corank of $F_M$ is at least $2$.
\end{thm}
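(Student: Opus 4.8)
The plan is to prove both assertions by contradiction, reducing each to the impossibility of a bounded domain (or of a suitable product) being biholomorphic to a Zariski open subset of $\mf{P}^n$. Throughout I use that a quasiprojective $M\subset\mf{P}^n$ of dimension $n$ is Zariski dense, so its closure is all of $\mf{P}^n$ and $M=\mf{P}^n\setminus Y$ for a proper closed subvariety $Y$. For the first assertion, suppose $M$ were hyperbolic. Since $\Om$ is bounded it is hyperbolic, and since $\Om/\Aut(\Om)$ is compact the theorem of Forn\ae ss and Sibony \cite{FS1981} quoted in the introduction gives $M\simeq\Om$. I would then split according to the codimension of $Y$. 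If $Y$ has codimension at least two everywhere, then every holomorphic function on $M$ extends across $Y$ to $\mf{P}^n$ (holomorphic functions extend across analytic subsets of codimension at least two) and is therefore constant; this contradicts the fact that the bounded domain $\Om$ carries non-constant bounded holomorphic functions, namely its coordinate functions. Hence $Y$ contains a hypersurface.

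In the remaining case the key point is that $\Om$, being bounded with $\Om/\Aut(\Om)$ compact, carries a complete invariant metric, so $M\simeq\Om$ is complete hyperbolic and hence hyperbolically embedded in the compact space $\mf{P}^n$. By the Kiernan--Kobayashi extension theorem every biholomorphism of $M=\mf{P}^n\setminus Y$ then extends to a holomorphic self-map of $\mf{P}^n$; such a map is birational and generically injective, hence lies in $\mathrm{PGL}_{n+1}(\mf{C})$, and it preserves $Y=\mf{P}^n\setminus M$. Thus $\Aut(M)=\Aut(\Om)$ is realised as the linear algebraic subgroup $G\subset\mathrm{PGL}_{n+1}(\mf{C})$ stabilising $Y$. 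Now $M/\Aut(M)=\Om/\Aut(\Om)$ is compact while $M$ is non-compact, so $G$ cannot be finite (a finite quotient of a non-compact space is non-compact); therefore $\dim G\ge 1$. A positive-dimensional linear algebraic group over $\mf{C}$ contains a one-parameter subgroup isomorphic to $\mf{G}_a$ or $\mf{G}_m$, and since $G$ acts faithfully on the dense set $M$ this subgroup has a non-trivial orbit. Its orbit map is a non-constant holomorphic image of $\mf{C}$ or $\mf{C}^{*}$, and composing with $\exp$ in the latter case produces a non-constant entire curve in $M$, contradicting the hyperbolicity of $M$. This shows $M$ cannot be hyperbolic.

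For the second assertion, $\Om=\mf{B}^n$ or $\De^n$ is bounded and homogeneous, so $\Om/\Aut(\Om)$ is a point and by the first part $M$ is non-hyperbolic; thus $F_M$ has corank at least one, and it remains to exclude corank exactly one. If the corank were one, Theorem~\ref{T2}(iii), together with the identification of the corank-one retracts of $\mf{B}^n$ and $\De^n$ (Lemma 4.7 of \cite{FS1981}, and the fact recorded above that retracts of a simply connected domain are simply connected and admit solutions of both Cousin problems), shows that $M$ is biholomorphic to a product $Z\times\mf{C}$, where $Z$ is an $(n-1)$-dimensional retract biholomorphic to $\mf{B}^{n-1}$ respectively $\De^{n-1}$. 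It then suffices to show that such a product cannot be quasiprojective. The codimension-two case is dispatched exactly as before, since $Z\times\mf{C}$ carries the non-constant bounded functions pulled back from the bounded factor $Z$.

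The main obstacle is the remaining case, in which $M=\mf{P}^n\setminus Y$ is the complement of a hypersurface and is biholomorphic to $Z\times\mf{C}$: here $M$ is no longer hyperbolic, so the extension of automorphisms used in the first part is unavailable. To treat it I would exploit the $\mf{C}$-fibration itself. The fibres $\{z\}\times\mf{C}$ are precisely the leaves of the one-dimensional foliation along which $F_M$ degenerates, so they are entire curves in $\mf{P}^n\setminus Y$ that fill up $M$, while the leaf space is the hyperbolic factor $Z\simeq\mf{B}^{n-1}$. One route is to show that the presence of this covering family of entire curves forces the logarithmic Kodaira dimension $\bar\kappa(M)=\bar\kappa(Z\times\mf{C})=-\infty$, which constrains $\deg Y\le n$ and hence makes $\mf{P}^n\setminus Y$ rich in entire curves in more than one independent direction, contradicting that the corank is exactly one. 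The delicate point, which I expect to be the crux, is controlling the closures of these transcendental entire curves in $\mf{P}^n$ and ensuring that the relevant invariants are genuinely preserved under a biholomorphism that need not be algebraic. Granting this, corank one is impossible and the corank of $F_M$ is at least two.
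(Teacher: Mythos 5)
Your proposal has a genuine gap, and it stems from a single overlooked fact: the Riemann removable singularity theorem extends \emph{bounded} holomorphic functions across proper analytic subsets of \emph{any} codimension --- boundedness is precisely the hypothesis that makes codimension-one sets removable. Because you missed this, you split into cases by the codimension of $Y=\mf{P}^n\setminus M$ and declared the hypersurface case to be the hard one, when in fact no case division is needed. The paper's proof is a few lines: if $M$ were hyperbolic, the Forn\ae ss--Sibony theorem gives a biholomorphism $\varphi: M \to \Om$; the component functions of $\varphi$ are bounded holomorphic functions on $\mf{P}^n\setminus Y$, hence extend holomorphically to $\mf{P}^n$ even when $Y$ is a hypersurface, hence are constant by compactness of $\mf{P}^n$ --- contradicting that $\varphi$ is a biholomorphism. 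For the second assertion, if the corank were one, Corollary 4.8 of \cite{FS1981} gives $M \simeq \Om' \times \mf{C}$ with $\Om' = \mf{B}^{n-1}$ or $\De^{n-1}$; composing $\varphi$ with the projection $\pi$ onto the bounded factor $\Om'$ again produces bounded holomorphic functions on $M$, which extend to $\mf{P}^n$ and are constant, so $\varphi$ maps all of $M$ into a single one-dimensional fibre of $\pi$, impossible for a biholomorphism of an $n$-dimensional manifold with $n \geq 2$. Your own remark that ``$Z\times\mf{C}$ carries non-constant bounded functions pulled back from the bounded factor'' is exactly the right observation --- it simply applies in the hypersurface case too, and closes it immediately.

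The concrete consequences of the oversight are two. First, your treatment of the hypersurface case in the second assertion is not a proof at all but a speculative program (foliations by the degenerate leaves, logarithmic Kodaira dimension $-\infty$, a bound $\deg Y \le n$, ``entire curves in more than one independent direction''), which you yourself flag as the unresolved crux; as it stands the theorem is unproved there. Second, your alternative route through the hypersurface case of the first assertion contains an unjustified step even on its own terms: complete hyperbolicity of $M$ does \emph{not} imply that $M$ is hyperbolically embedded in $\mf{P}^n$ (hyperbolic embeddedness is a boundary condition, not a consequence of completeness), so the appeal to the Kiernan--Kobayashi extension theorem, and with it the realisation of $\Aut(M)$ as an algebraic subgroup of $\mathrm{PGL}_{n+1}(\mf{C})$, is not available without further argument. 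Both difficulties evaporate once the bounded-function extension is used; I would encourage you to rewrite the proof along the paper's lines.
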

In particular, if $n=2$ and $\Om=\mf{B}^2$ or $\De^2$, then $M$ in the union problem cannot be quasiprojective. We do not know what happens in higher dimensions nor when the corank increases. 

\medskip

A similar question is if $M$ in the union problem can be the complement of a closed complete pluripolar set in a Stein manifold $X$.

\begin{thm}\label{cp}
Let $M$ be the union of an increasing sequence of open subsets $M_j$ in a Stein manifold $X$ of dimension $n$, each of which is biholomorphic ot $\mf{B}^n$. If $M=X\setminus P$ for some closed complete pluripolar set $P$ in $X$, then the corank of $F_M$ is at least one. In particular, $ M $ cannot be hyperbolic.
\end{thm}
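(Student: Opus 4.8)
The plan is to argue by contradiction, showing that hyperbolicity of $M$ would force $P$ to be empty. Suppose the corank of $F_M$ were $0$, i.e. $M$ is hyperbolic. Since $\mathbb{B}^n$ is homogeneous, $\mathbb{B}^n/\Aut(\mathbb{B}^n)$ is a point (hence compact) and $\mathbb{B}^n$ is hyperbolic, so the Forn\ae ss--Sibony theorem recalled in the introduction applies and yields a biholomorphism $\Phi \colon M \to \mathbb{B}^n$. The whole point will then be to use the pluripolarity of $P$ to extend $\Phi$ across $P$ and conclude that $M$ must already be all of $X$.

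First I would extend $\Phi$. Writing $\Phi = (\Phi_1,\dots,\Phi_n)$, each $\Phi_k$ is a bounded holomorphic function on $M = X \setminus P$. Because $P$ is closed and (complete) pluripolar, it has Lebesgue measure zero and is therefore nowhere dense, so $M$ is dense in $X$; moreover a closed pluripolar set is a removable singularity set for bounded holomorphic functions. Hence each $\Phi_k$ extends to $\tilde\Phi_k \in \mathcal{O}(X)$, giving a holomorphic map $\tilde\Phi \colon X \to \mathbb{C}^n$ with $\tilde\Phi|_M = \Phi$. Since $|\Phi| < 1$ on the dense set $M$, continuity gives $|\tilde\Phi| \le 1$ on $X$. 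Now $|\tilde\Phi|^2$ is plurisubharmonic on the connected manifold $X$ and is $< 1$ on the nonempty open set $M$; by the maximum principle it can never equal $1$, so in fact $\tilde\Phi(X) \subset \mathbb{B}^n$.

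To finish, set $\Psi = \Phi^{-1} \colon \mathbb{B}^n \to M \subset X$ and consider $\Psi \circ \tilde\Phi \colon X \to M \subset X$. This map is holomorphic and agrees with the identity on the dense subset $M$, so by the identity theorem $\Psi \circ \tilde\Phi = \mathrm{id}_X$. But the image of $\Psi$ is exactly $M$, whence $X = (\Psi \circ \tilde\Phi)(X) \subset M$; that is, $M = X$ and $P = \emptyset$, contradicting that $P$ is a nonempty removed set. (In fact the same computation gives $\tilde\Phi \circ \Psi = \mathrm{id}_{\mathbb{B}^n}$, so $\tilde\Phi$ would be a biholomorphism $X \to \mathbb{B}^n$; the content is precisely that hyperbolicity of $M = X \setminus P$ is incompatible with $P \neq \emptyset$.) Consequently the corank of $F_M$ is at least one, and in particular $M$ is not hyperbolic.

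The step I expect to be the crux is the removable singularity input: that a closed pluripolar set carries no obstruction to extending bounded holomorphic functions, which is what converts the biholomorphism $\Phi$ on $M$ into a genuine holomorphic map defined on all of $X$. The remaining ingredients---the maximum principle forcing the image into the open ball (so that $\Phi^{-1} \circ \tilde\Phi$ is even defined), the identity theorem, and the standing assumptions that $X$ is connected and $P$ nonempty---are routine once the extension is in hand. It is worth noting that $X$ being Stein is not used heavily beyond ensuring a good global function theory and connectedness, and that the hypothesis $P \neq \emptyset$ is essential: if $P = \emptyset$ then $M = X$ could itself be biholomorphic to $\mathbb{B}^n$ and hence hyperbolic.
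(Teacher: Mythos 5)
Your proof is correct, and its skeleton matches the paper's: assume corank zero, invoke the Forn\ae ss--Sibony theorem to get a biholomorphism $\Phi : M \to \mathbb{B}^n$, extend it across $P$ using removability of closed pluripolar sets for bounded holomorphic functions, and derive a contradiction via the maximum principle. The endgame, however, is genuinely different. The paper asserts that the extension satisfies $\psi(P) \subset \partial \mathbb{B}^n$ --- a claim that tacitly uses properness of the biholomorphism $\psi$, so that sequences in $M$ converging to a point of $P$ have images tending to the unit sphere --- and then applies the plurisubharmonic maximum principle to $\vert\psi\vert^2$, which attains the value $1$ at an interior point of $X$, forcing $\psi$ to be constant, a contradiction. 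You go the opposite way: the maximum principle keeps the image of the extension $\tilde\Phi$ strictly inside the open ball, so that $\Phi^{-1} \circ \tilde\Phi$ is globally defined on $X$, and the identity theorem then gives $\Phi^{-1} \circ \tilde\Phi = \mathrm{id}_X$, whence $X \subset M$ and $P = \emptyset$. Your route buys two things: it sidesteps the boundary-value claim (and the properness remark needed to justify it, which the paper omits), and it surfaces the implicit hypothesis $P \neq \emptyset$, which is indeed essential --- with $P = \emptyset$ one may take $X = M = \mathbb{B}^n$, which is hyperbolic, so both proofs silently need $P$ nonempty to reach a contradiction. The paper's route is shorter once one grants $\psi(P) \subset \partial \mathbb{B}^n$; both reduce to the same removability lemma, which you correctly identify as the crux.
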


\section{When $ M $ is hyperbolic}
In this section, we present the proof of Theorem \ref{T1}. Recall that $ M = \cup M_j $ is a hyperbolic manifold where $ M_j \subset M_{j+1} $ and $M_j$ is biholomorphic to $\Om$ for each $ j $. Let $ \psi^j : M_j \rightarrow \Omega $ be a biholomorphic mapping. Fixing $ z^0 \in M $, we may assume that $ z^0 \in M_j $ for all $ j$, and let $p^j:=\psi^j(z^0)$. There are two cases to be examined:
\begin{enumerate}
 \item[(a)] $ \{p^j \} $ is a relatively compact subset of $ \Omega $, and
 \item[(b)] $ \{ p^j \} $ has at least one limit point $ p^0 \in \partial \Omega $.
\end{enumerate}
In case (a), as the domain $\Om$ is taut, after passing to a subsequence, $\{\psi^j\}$ converges uniformly on compact subsets of $M$ to a holomorphic mapping $ \psi: M \rightarrow \Omega $ and it follows that $ M $ is biholomorphic to $ \Omega $ (see, for instance, Lemma 3.1 of \cite{MV2012}). Thus we are left with case (b).

\section{Proof of Theorem~\ref{T1} (i)-(iv)}
\subsection{Step I: Scaling method and the stability of the Kobayashi metric}
We briefly describe the scaling method for $\Om$ when it satisfies any of the hypotheses (i)-(iv) of Theorem~\ref{T1} and establish that in each of these cases we have

\begin{prop}\label{k-stability}
There exists a sequence of biholomorphic maps $A^j: \Om \to \Om_j$ where $\Om_j \subset \mf{C}^n$ are domains that converge in the local Hausdorff sense to a taut domain $\Om_{\infty} \subset \mf{C}^n$ and $q^j:=A^j(p^j) \to q^0$ for some $q^0 \in \Om_{\infty}$ which we will refer to as the \textit{base point}. Further, the family of maps $\ti \psi^j:=A^j \circ \psi^j: D_j \to \Om_j$ has a limit $\ti \psi:M \to \Om_{\infty}$ and the Kobayashi distances $d_{\Om_j}$ satisfies the following stability property on $\ti \psi(M)$:
\begin{equation}\label{E4}
\limsup_{j \rightarrow \infty} d_{\Omega_j} \big( \tilde{\psi} (z^1), \tilde{\psi} (z^2) \big) \leq d_{\Omega_{\infty}} \big( \tilde{\psi} (z^1), \tilde{\psi} (z^2) \big),
\end{equation}
for all $z^1,z^2 \in M$.
\end{prop}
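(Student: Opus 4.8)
The plan is to establish the scaling data separately for each of the four classes, since the dilations $A^j$ depend on the boundary geometry of $\Om$ near the accumulation point $p^0$ of $\{p^j\}$, and then to deduce the stability estimate \eqref{E4} by a single soft argument that uses only the local Hausdorff convergence $\Om_j \to \Om_{\infty}$ together with the tautness of $\Om_{\infty}$.

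For the construction of $A^j$ I would invoke, in each case, the scaling already available in the literature: the anisotropic Cho-type dilations adapted to the weights of $\pa\Om$ at $p^0$ (preceded by the polynomial normalization that kills the harmonic terms) when $\Om$ is Levi corank one, yielding the polynomial model $\Om_{\infty}$ of Theorem~\ref{T1}(i); the convex scaling when $\Om$ is smoothly bounded convex, separating the finite type case (where $\Om_{\infty}$ is a convex polynomial domain) from the infinite type case (where $\Om_{\infty}$ is convex and contains no complex line); the scaling at the relevant smooth boundary or edge point for the polyhedral domains in $\mf{C}^2$; and affine dilations for the (possibly non-smooth) convex domains of class (iv), whose boundaries carry no nontrivial analytic variety. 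In every instance the dilation is normalized so that $q^j=A^j(p^j)$ converges to an interior base point $q^0\in\Om_{\infty}$, and the limit $\Om_{\infty}$ is taut (the polynomial and convex models here being complete hyperbolic). As part of each construction I would also record the uniform stability of the Kobayashi metric of the $\Om_j$ near $q^0$, namely a bound showing that Kobayashi balls of fixed radius about $q^j$ stay inside a fixed Euclidean compact; this is the technical heart and is where the individual geometry genuinely enters, being hardest in the convex infinite type case, where no polynomial normal form is available and one must rule out complex lines in $\Om_{\infty}$ directly.

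With this in hand I would produce $\ti\psi$ by normal families. Writing $\ti\psi^j:=A^j\circ\psi^j:M_j\to\Om_j$, fix a compact $K\subset M$; then $K\subset M_{j_0}$ for some $j_0$, and for $j\ge j_0$ the inclusion $M_{j_0}\subset M_j$ gives $d_{M_j}\le d_{M_{j_0}}$, while biholomorphic invariance yields $d_{\Om_j}(q^j,\ti\psi^j(z))=d_{M_j}(z^0,z)\le d_{M_{j_0}}(z^0,z)$ for $z\in K$. By the recorded metric stability the points $\ti\psi^j(K)$ therefore lie in a fixed Euclidean compact, so $\{\ti\psi^j\}$ is normal; a diagonal extraction over an exhaustion of $M$ produces a holomorphic $\ti\psi:M\to\ov{\Om_{\infty}}$ with $\ti\psi(z^0)=q^0$. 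The exterior part of the local Hausdorff convergence keeps the image in $\ov{\Om_{\infty}}$, and the tautness of $\Om_{\infty}$ together with the interior base point $q^0$ excludes the image meeting $\pa\Om_{\infty}$, so $\ti\psi(M)\subset\Om_{\infty}$.

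Finally I would prove \eqref{E4} directly. Fix $z^1,z^2\in M$, set $a:=\ti\psi(z^1)$ and $b:=\ti\psi(z^2)$ in $\Om_{\infty}$, and given $\ep>0$ choose a chain of analytic discs $f_1,\dots,f_k:\De\to\Om_{\infty}$ joining $a$ to $b$ whose total Poincar\'e length is within $\ep$ of $d_{\Om_{\infty}}(a,b)$. Since the images $f_i(\De)$ may cluster on $\pa\Om_{\infty}$, I replace each $f_i$ by its contraction $\zeta\mapsto f_i(r\zeta)$ for a single $r<1$ close enough to $1$ that the total length grows by at most $\ep$; the contracted discs now have images compactly contained in $\Om_{\infty}$, so their union $\widehat K$ is a compact subset of $\Om_{\infty}$ and, by the interior part of the local Hausdorff convergence, $\widehat K\subset\Om_j$ for all large $j$. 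The contracted chain is then admissible in $\Om_j$ and still joins $a$ to $b$, whence $d_{\Om_j}(a,b)\le d_{\Om_{\infty}}(a,b)+2\ep$ for all large $j$; letting $j\to\infty$ and then $\ep\to 0$ gives \eqref{E4}. The main obstacle is thus not this last soft step but the uniform metric stability underlying the construction of $\ti\psi$, which must be verified class by class.
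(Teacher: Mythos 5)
Your proposal is correct and shares the paper's overall architecture (case-by-case scaling constructions imported from the literature, normality of $\ti\psi^j$ via uniform localization of Kobayashi balls, tautness of $\Om_{\infty}$ plus the interior base point $q^0$ to keep the limit inside $\Om_{\infty}$), but your proof of the stability estimate \eqref{E4} takes a genuinely different and softer route. The paper first establishes the stability of the \emph{infinitesimal} metric, $F_{\Om_j}\to F_{\Om_{\infty}}$ uniformly on compacts of $\Om_{\infty}\times\mf{C}^n$ (their equation \eqref{E5}), by a normal-families analysis of almost-extremal discs $f^j:\De\to\Om_j$, and then integrates $F_{\Om_j}$ along a nearly extremal piecewise $C^1$ path whose trace is compact in $\Om_{\infty}$, hence eventually in $\Om_j$; you instead work with the chain-of-discs definition of $d_{\Om_{\infty}}$ and contract the discs so their images form a compact $\widehat K\subset\Om_{\infty}$, which the interior half of local Hausdorff convergence places inside $\Om_j$ for large $j$. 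Your argument needs no stability of the infinitesimal metric at all, which is a real economy for proving \eqref{E4}; what the paper's heavier route buys is the statement \eqref{E5} itself, which is not redundant: it is invoked again in the non-hyperbolic case (to pass from $F_{M_j}(p,v)=F_{\Om_j}(\ti\psi^j(p),d\ti\psi^j(p)v)$ to the limit identity \eqref{E11} in the proof of Theorem~\ref{T2}), so in the paper's economy the work is amortized, while your approach would have to supply \eqref{E5} separately there. One point to state carefully in your disc-contraction step: replacing $f_i$ by $\zeta\mapsto f_i(r\zeta)$ while keeping the original evaluation parameters would make the endpoints of the chain drift, so the chain would no longer join $a$ to $b$; you must also rescale the parameters, evaluating at $\zeta_i/r$ and $\eta_i/r$ (legitimate for $r$ close to $1$ since the chain is finite), which preserves the junction points exactly and increases the total Poincar\'e length only slightly --- your phrase ``the total length grows by at most $\ep$'' suggests you intend exactly this, but as literally written (``its contraction $\zeta\mapsto f_i(r\zeta)$ \ldots still joins $a$ to $b$'') the claim is false without the parameter rescaling, or else requires bridging the small gaps by short linear discs inside balls $B(p_i,\delta)\subset\Om_j$ furnished by the interior convergence.
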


\subsection*{Scaling a Levi corank one domain $ \Omega $:}
Let $\Omega$ be a domain in $\mathbb{C}^n$ such that a smooth piece 
$\Gamma \subset \partial \Omega$ forms a pseudoconvex (real) hypersurface in $\mathbb{C}^n$ of finite $ 1$-type (in the sense of D' Angelo, as always in this article). Then $p \in \Gamma $ is said to be a Levi corank one boundary point of $\Omega$ if the Levi form of $\Gamma$ has at least $ (n-2)$ positive eigenvalues at $p$. Also, every boundary point in a sufficiently small neighbourhood of $p$ would also be Levi corank one since this is an open condition. We say that $\Omega$ is a Levi corank one domain if $\partial \Omega$ is smooth, pseudoconvex, of finite type  and the Levi form of $ \partial \Omega $ has at least $ (n-2) $ positive eigenvalues everywhere on $ \partial \Omega$.

To describe the scaling method, let $ \Omega =\{ \rho(z, \overline{z}) <0 \} $ for some smooth defining function $ \rho $ and the $1$-type of $ p^0$ be $2m$. By relabelling the coordinates if necessary we assume that $(\pa \rho/\pa z_n)(p^0) \neq 0$. Then there exists a neighbourhood $U$ of $p^0$ such that $\vert \frac{\pa r}{\pa z_n}(p)\vert  \geq c$ for each $p \in U$ where $c$ is a constant. After a linear change of coordinates, we can find coordinates $z_1, \ldots, z_n$, and smooth functions $b_1, \ldots, b_{n-1}$ on $U$ such that
\[
L_n=\frac{\pa}{\pa z_n},\quad L_j=\frac{\pa}{\pa z_j}+b_j\frac{\pa}{\pa z_n}, \quad L_jr\equiv 0, \quad  b_j(p_0)=0 , \quad j=1, \ldots, n-1,
\]
which form a basis of $\mf{C}T^{1,0}(U)$ and satisfy
\[
\begin{bmatrix} \pa \ov \pa r (p_0) (L_i, \ov L_j)\end{bmatrix}_{2\leq i,j \leq n-1}=\mf{I}_{n-2}.
\]
Observe that $b_j(p^0)=0$ implies that the normal vector to $\pa D$ at $p^0$ is in the direction of $e_n=(0, \ldots,1)$. By \cite{Cho2}, shrinking $ U $ if necessary, for each $ \zeta \in U \cap \overline{\Omega} $, there is a global coordinate map $ \theta^{\zeta}$---which is a holomorphic polynomial automorphism of the form
\begin{equation}\label{cen-exp}
\theta^{\zeta}(z)=\left(z_1-\z_1, G_{\z}(\ti z - \ti \z)-Q_2(z_1-\z_1), \langle z-\z, \nu(\z)\rangle-Q_1({'}z-{'}\z)\right)
\end{equation}
where $G_{\z}\in GL_{n-2}(\mf{C})$, $\ti z=(z_2, \ldots, z_{n-1})$, $Q_1$ is a polynomial, $Q_2$ is a vector valued polynomial, and $\nu =(\pa \rho/\pa \ov z_1, \ldots, \pa \rho/\pa \ov z_n)$---so that the local defining function for $ \theta^{\zeta} (\Omega) $ near the origin is represented by 
\begin{multline}\label{nrmlfrm}
\left\{ z \in \mathbb{C}^n: \rho(\zeta) + 2 \Re z_n + \sum_{l = 2}^{2m}P_{l, \zeta}(z_1, \overline{z}_1) + \vert{z_2}\vert^2 + \ldots + \vert{z_{n-1}}\vert^2 + \right. \\
\left. \sum_{\al = 2}^{n - 1} \sum_{ \substack{j + k \le m\\
                                                          j, k > 0}} \Re \Big( \big(b_{jk}^{\al}(\z) z_1^j \ov z_1^k \big) z_{\al} \Big)+ R_{\z}(z) < 0 \right \},
\end{multline}
where 
\[
P_{l,\zeta}( z_1, \overline{z}_1) = \sum_{j + k = l} a^l_{jk}(\zeta) z_1^j \ov z_1^k,
\]
are real valued homogeneous polynomials of degree $l$ without harmonic terms and the error function $R_{\zeta}(z) \to 0$ as $z \to 0$ faster than one of the monomials of weight $1$. We will refer to $\theta^{\z}$ as the \textit{centering map} associated to $\z$.

Now choose $ \zeta^j \in \partial \Omega $ so that 
\[
\zeta^j = p^j + ('0, \epsilon_j), \; \epsilon_j > 0. 
\]
Let $ \theta^{\zeta^j} $ be the `centering maps' associated to $ \zeta^j \in \partial \Omega $. It follows from the explicit form \eqref{cen-exp} of the automorphisms $ \theta^{\zeta^j} $ that $ \theta^{\zeta^j}(\zeta^j) = ('0, 0) $ and 
\[
 \theta^{\zeta^j} (p^j) = 
\big('0, - \epsilon_j/ d_0(\zeta^j) \big) ,
\]
where $ d_0(\zeta^j ) = \Big( \partial \rho/\partial \overline{z}_n (\zeta^j) \Big)^{-1} \rightarrow  \Big( \partial \rho/\partial \overline{z}_n (p^0) \Big)^{-1}$ as $ j \rightarrow \infty $. Next, define 
\begin{alignat*}{3} 
\tau(\zeta^j, \epsilon_j) = 
\min_{ 2 \leq l \leq 2m}  \left( \frac{ \epsilon_j}{ \| P_{l, \zeta^j}(z_1, \overline{z}_1) \|} \right)^{1/l},
\end{alignat*}
where $ \| \cdot \| $ is the $ l^{\infty}$-norm on the finite-dimensional space of
polynomials on the complex plane with degree at most $ 2m $ as a finite sequence of coefficients. Denote by $ \Delta^{\epsilon_j}_{\zeta^j} : \mathbb{C}^n \rightarrow \mathbb{C}^n $ a dilation of coordinates given as follows
\[
\Delta^{\epsilon_j}_{\zeta^j} (z_1, z_2, \ldots, z_n ) = 
\left( \frac{z_1}{\tau(\zeta^j, \epsilon_j)}, \frac{z_2}{\epsilon_j^{1/2}}, \ldots, \frac{z_{n-1}} {\epsilon_j^{1/2}}, \frac{z_n} {\epsilon_j} \right).
\]
The scaling sequence is defined by setting $ A^j := \Delta^{\epsilon_j}_{\zeta^j} 
\circ \theta^{\zeta^j} $. Notice that
\[
q^j:= A^j(p^j) = \left('0, - 1/ d_0(\zeta^j) \right) \rightarrow q^0:=('0,-1/d_0(p^0)),
\]
and by \cite{TT}, the scaled domains $ \Omega_j = A^j(\Omega) $ converge in the Hausdorff sense to
\[
\Omega_{\infty} = \big\{ z \in \mathbb C^n : 2 \Re z_n + P_{\infty}(z_1, \ov z_1) + 
\vert z_2 \vert^2 + \ldots + \vert z_{n-1} \vert^2 < 0  \big\}
\]
where 
$ P_{\infty} $ is a subharmonic polynomial of degree at most $2m$ without harmonic terms. Further, it should be noted that if $ p^j $ approaches $ p^0 $ along the inner normal to $ \partial  \Omega $ at $ p^0 $, then the polynomial $P_{\infty}$ coincides with the polynomial of the same degree in the homogeneous Taylor expansion of the defining function $ \rho $ around the origin.


\subsection*{Scaling a strongly pseudoconvex polyhedral domain $ \Omega $:}
A bounded domain $ \Omega \subset \mathbb{C}^n $ is said to be a strongly pseudoconvex polyhedral domain with piecewise smooth boundary if 
there are $ C^2$-smooth real valued functions 
$ \rho_1, \ldots, \rho_k : \mathbb{C}^n \rightarrow \mathbb{R} $, $ k \geq 2 $ such that
\begin{itemize}
 \item $ \Omega = \big\{ z \in \mathbb{C}^n: \rho_1(z) < 0, \ldots, \rho_k(z) < 0 \big\} $,
 \item for $ \{ i_1, \ldots, i_l \} \subset \{1, \ldots, k\} $, the gradient vectors $ \nabla \rho_{i_1}(p), \ldots, \nabla \rho_{i_l}(p) $ are linearly independent 
 over $ \mathbb{C} $ for every point $ p $ such that $ \rho_{i_1}(p) = \ldots = 
 \rho_{i_l}(p) = 0 $, and
 \item $ \partial \Omega $ is strongly pseudoconvex at every smooth boundary point,
\end{itemize} 
where for each $ i = 1, \ldots, k $ and $ z \in \mathbb{C}^n $,
\[
 \nabla \rho_i (z) = 2 \left( \frac{\partial \rho_i}{\partial \bar{z}_1}(z), \ldots, \frac{\partial \rho_i}{\partial \bar{z}_n}(z) \right).
\]
Since the intersection of finitely many domains of holomorphy is a domain of holomorphy, it follows that the polyhedral domain $ \Omega $ is pseudoconvex. The scaling method for a strongly pseudoconvex polyhedral domain $ \Omega \subset \mathbb{C}^2 $ was introduced in \cite{Kim-Yu} and the reader is referred to it for the following: There exists a sequence of biholomorphic maps $A^j: \Om \to \Om_j$ from $\Om$ onto the domains $\Om_j$ such that $\Om_j$ converges in the local Hausdorff sense to a domain $\Om_{\infty}$ which is one of $ \mathbb{B}^2 $, the bidisc $ {\Delta}^2 $, or a Siegel domain of second kind given by 
\begin{equation} \label{Sie}
\left\{ (z_1, z_2) \in \mathbb{C}^2: \Im z_1 + 1 > \frac{ Q_1(z_2)}{m^2}, \Im z_2 > -1 \right\}, 
\end{equation}
where $ m > 0 $ and $ Q_1 $ is a strictly subharmonic polynomial of degree $ 2 $, and $q^j:=A^j(p^j) \to q^0 \in \Om_{\infty}$. In particular, note that $ \Omega_{\infty} $ is taut.

\subsection*{Scaling bounded convex domains:}
Firstly, suppose $\pa \Om$ is smooth. Then, as mentioned before, there are two cases to be considered:
\begin{enumerate}
 \item[(I)] $ \partial \Omega $ is of finite $1$-type near $ p^0$, or
 \item[(II)] $ \partial \Omega $ is of infinite $1$-type near $ p^0$.
\end{enumerate}
\noindent \textbf{Case (I):} Assume that $ \Omega $ is given by a smooth defining
function $ \rho $, $ p^0 $ is the origin and has type $2m$, and that $ \nabla \rho \left(('0,0)\right) = ('0,1) $. Consider the domain
\[
\Omega_{q,\epsilon} = \big\{ z : \rho(z) < \rho(q) + \epsilon \big \},
\]
for $ q \in \Omega$ sufficiently close to $ \partial \Omega $ and $ \epsilon > 0 $.  Choose $ s^n_{q,\epsilon} \in \partial \Omega_{q, \epsilon} $
where the distance of $q$ to $ \partial \Omega_{q, \epsilon} $ is
realized. Denote the complex line containing $q$ and $
s^n_{q,\epsilon} $ by $ L_n $. Let $ \tau_n(q, \epsilon) =
\big| q - s^n_{q,\epsilon} \big| $ and $ ( L_n)^{\bot} $ be the
orthogonal complement of the complex line $ L_n $ in $
\mathbf{C}^n$. Note that the distance
from $q$ to $ \partial \Omega_{q, \epsilon} $ along each complex line
in $ ( L_n)^{\bot} $ is uniformly bounded as $ \partial \Omega $ is of finite type. Let $ \tau_{n-1} (q,
\epsilon) $ be the largest such distance and $
s^{n-1}_{q,\epsilon} \in \partial \Omega_{q, \epsilon} $ be any point
such that $ \big | q - s^{n-1}_{q,\epsilon} \big| = \tau_{n-1}(q,
\epsilon)$. Denote the complex line containing $q$ and $
s^{n-1}_{q,\epsilon} $ by $ L_{n-1} $. Next, consider the orthogonal
complement of the $ \mathbb{C}$-subspace spanned by $ L_n $ and $
L_{n-1} $ and find the largest distance from $ q $ to $ \partial
\Omega_{q, \epsilon} $ therein. Choose $  s^{n-2}_{q,\epsilon}
\in \partial \Omega_{q, \epsilon} $ where this distance is realized.
Let $ \tau_{n-2} (q, \epsilon) = \big | q -  s^{n-2}_{q,\epsilon}
\big| $ and $ L_{n-2} $ denote the complex line containing $q$ and
$  s^{n-2}_{q,\epsilon} $. Repeating this process yields orthogonal lines $ L_n, L_{n-1}, \ldots, L_1 $. Let $ T^{q, \epsilon}
$ be the translation sending the point $ q $ to the origin and $ U^{q,
\epsilon} $ be a unitary transformation of $ \mathbb{C}^n $ sending $
L_i$ to the $ z_i $-axis and $  s^{i}_{q,\epsilon} - q $ to a
point on the $ \Re z_i $-axis. It follows by construction that
\begin{eqnarray*}
U^{q, \epsilon} \circ T^{q, \epsilon} (q) & = & 0 \\
\mbox{and} \qquad U^{q, \epsilon} \circ T^{q, \epsilon}
\big(s^i_{q, \epsilon} \big) & = & \big ( 0, \ldots, \tau_i(q, \epsilon), \ldots, 0)
\end{eqnarray*}
for all $ 1 \leq i \leq n$. 

For scaling $\Om$ along $\{p^j\}$, set $ \epsilon_j
= - \rho (p^j) $ and let $ \tau_1(p^j, \epsilon_j
), \ldots, \tau_n(p^j, \epsilon_j ) $ and $ s^{1,j}, \ldots, s^{n,j} $ be positive numbers associated with $ p^j $ and $ \epsilon_j $ as defined above. Define the
dilations
\[
\Lambda^{\epsilon_j}_{p^j} (z) = \left( \frac{
z_1}{\tau_1(p^j, \epsilon_j )}, \ldots, \frac{z_n}{\tau_n(p^j, \epsilon_j )} \right),
\]
the scaling sequence by setting $ A^j = \Lambda^{\epsilon_j}_{p^j}
 \circ U ^{p^j, \epsilon_j} \circ T^{p^j, \epsilon_j} $ and the scaled domains $ \Omega_j = A^j(\Omega) $. Note that $ \Omega_j $ is convex and $q^j:=A^j(p^j)=('0,0) \in \Omega_j $ for all $j$. Define $q^0:=({'0},0)$. By \cite{Gaussier-1997}, $ \Omega_j$ converges to
\[
\Omega_0 = \Big \{ ('z, z_n) \in \mathbb{C}^n : 
\Re \Big ( \displaystyle\sum_{k=1}^n b_k z_k  \Big) + P('z) < 1
\Big \}
\]
where $ b_k $ are complex numbers and $ P$ is a real convex
polynomial of degree less than or equal to $2m$. Furthermore, \cite{Mcneal-1992} shows that $ \Omega_0 $ is biholomorphically equivalent
to 
\[ 
\Omega_{\infty} = \big \{ ('z, z_n) \in
\mathbb{C}^n : 2 \Re z_n + P('z) < 0 \big \}. 
\]

\noindent \textbf{Case (II):} When $ p^0 \in \partial \Omega $ is a point of infinite type, it follows from Proposition 6.1 of \cite{Zimmer} that there exist complex affine transformations $ A^j $ of $ \mathbb{C}^n $ such that (after possibly passing to a subsequence) the domains $ \Omega_j = A^j (\Omega) $ converge to a convex domain $ \Omega_{\infty} $ in $ \mathbb{C}^n $. Also, $ q^j:=A^j(p^j) = q^0$ for some $ q^0 \in \Om_{\infty}$. Moreover, the limiting domain $ \Omega_{\infty} $ is complete hyperbolic and hence taut.

\subsection*{Non-smooth case}
We now deal with the case when smoothness properties of the boundary is not given but we are instead given that $\partial \Omega$ does not contain any nontrivial (complex) analytic varieties. We shall also assume that $M$ is complete hyperbolic in this sub-section. A good example of such a domain (with non-smooth boundary) to keep in mind here is the minimal ball which is defined as $\left\{z \in \mf{C}^n: N(z) < 1 \right\}$
and is the unit ball with respect to the norm
\[
N(z) = \frac{1}{\sqrt 2}\left( \sum_{j=1}^{n} \vert z_j \vert^2 + \big\vert \sum_{j=1}^{n} z_j^2 \big\vert \right)^{1/2}.
\]
More information on this can be found in \cite{HP}, \cite{OPY} for example.
Recalling that $p^j$ is the image of $z^0$ under the biholomorphisim $\psi^j: D_j \to \Om$, we shall modify the version of Frankel's scaling technique in Kim's article \cite{Kim1} (cf. also \cite{Kim2}) to 
apply it to our situation. We shall now do this in general (not just for the minimal ball), for a bounded convex domain $\Omega$ whose boundary does not contain nontrivial complex analytic varieties, as described in the following proposition.

\begin{prop} \label{scalprop}
With notations and assumptions as just-mentioned, we have: there exists a sequence $\{A_j\} \subset GL_n(\mathbb{C})$ such that
\begin{itemize}
\item[(i)] $\| A_j^{-1} \| \to 0$ as $j \to \infty$ and,
\item[(ii)] $\lim_{j \to \infty} A_j(\Omega - p^0) = \hat{\Omega}$ exists and is biholomorphic to $\Omega$, 
\end{itemize}
where the limit is taken in the sense of local Hausdorff convergence in $\mathbb{C}^n$ and where $\Omega-p^0$ denotes the translate
$\Omega - p^0 = \{ z - p^0 \; : \; z \in \Omega\}$.
\end{prop}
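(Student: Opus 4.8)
The plan is to carry out a Frankel--type affine rescaling of $\Om$ adapted to the orbit $\{p^j\}$, organized by the \emph{intrinsic} (Kobayashi) geometry rather than the boundary geometry, and then to produce the biholomorphism $\hat\Om \simeq \Om$ by renormalizing the exhaustion maps $\psi^j$ in the spirit of \cite{Kim1}. After a translation I may assume $p^0$ is the origin; write $\Om_0 = \Om - p^0$, a bounded convex domain with $0 \in \pa \Om_0$ and interior points $a^j := p^j - p^0 \to 0$. To each $a^j$ I attach the Kobayashi indicatrix $I_{a^j} = \{ v \in \mf{C}^n : F_{\Om_0}(a^j, v) < 1 \}$, a bounded convex balanced domain by Lempert's theory, and let $A_j \in GL_n(\mf{C})$ be a complex-linear map carrying $I_{a^j}$ to a domain pinched between two fixed balls about the origin. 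Since each $\psi^j$ is a Kobayashi isometry, this prescription coincides (up to a fixed factor) with normalizing the derivative $d\psi^j(z^0)$, and I will use this identification below. The affine map $\ti A_j(z) := A_j(z - p^0)$ is then a biholomorphism of $\Om$ onto $\Om_j := A_j(\Om_0)$ carrying $p^j$ to the base point $q^j := A_j(a^j)$.

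The assertion $\|A_j^{-1}\| \to 0$ is exactly where the hypothesis that $\pa \Om$ carries no nontrivial analytic variety is used. I would show that along $a^j \to p^0$ the infinitesimal metric blows up in \emph{every} direction: if $F_{\Om_0}(a^j, v_j)$ remained bounded for some unit vectors $v_j \to v$, then a standard normal-families argument would extract a nonconstant analytic disc lying inside $\pa \Om$ through $p^0$, contradicting the hypothesis. Consequently every semi-axis of $I_{a^j}$ tends to $0$, the normalizing maps expand in all directions, and $\|A_j^{-1}\| \to 0$. The same uniform blow-up pins $\Om_j$ between two fixed balls centred at $q^j$, so the $\Om_j$ are uniformly bounded and, after passing to a subsequence, $q^j \to q^0$.

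Given this, the limit in (ii) follows by compactness: the $\Om_j$ are convex, uniformly bounded and nondegenerate at $q^j$, so the Blaschke selection principle yields, along a subsequence, a bounded convex domain $\hat\Om = \lim_j \Om_j$ in the local Hausdorff sense with $q^0$ an interior point; being bounded and convex, $\hat\Om$ is complete hyperbolic and hence taut. Here I would also record that $q^j = A_j(p^j - p^0)$ stays bounded, so that rescaling $\Om - p^0$ and rescaling $\Om - p^j$ give the same limit up to a convergent translation.

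The main obstacle is the biholomorphism $\hat\Om \simeq \Om$, for which the naive affine identifications $\ti A_j : \Om \to \Om_j$ are useless, since their source base point $p^j$ escapes to $\pa \Om$ and $\ti A_j^{-1} \to p^0$ on compacta. The plan is to supply the missing ``automorphisms'' from the exhaustion itself: fixing an index $k$, the maps $\alpha_j := \psi^j \circ (\psi^k)^{-1}$ are biholomorphisms of $\Om$ onto the subdomains $\psi^j(D_k) \subset \Om$ carrying the fixed interior point $p^k$ to $p^j$. Choosing the normalization $A_j$ so that $G_j := A_j\big(\alpha_j(\cdot) - p^j\big)$ satisfies $G_j(p^k) = 0$ and $dG_j(p^k) = \mathrm{Id}$ makes $\{G_j\}$ a normal family, and the stability of the Kobayashi metric under the scaling, as in \eqref{E4}, forces the limit $G : \Om \to \hat\Om$ to be nondegenerate and distance preserving. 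The crux, and the step I expect to be hardest, is surjectivity: here the completeness of $M$ (equivalently, of the limit) must be invoked so that the inverse maps form a normal family and assemble, through a diagonal argument in $k$ and $j$, into a holomorphic inverse of $G$, yielding $\hat\Om \simeq \Om$. I anticipate the two delicate points to be the two-sided ball estimate controlling the $\Om_j$ and the completeness/properness needed for surjectivity, both of which rest squarely on the absence of analytic varieties in $\pa \Om$.
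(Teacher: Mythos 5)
Your part (i) is sound and is in substance the paper's Lemma \ref{eigen}: the absence of analytic discs in $\pa\Om$, plus tautness/convexity and a normal-families limit of (almost) extremal discs, forces $F_{\Om}(p^j,\cdot)\to\infty$ uniformly in directions, and your indicatrix normalization agrees with the paper's choice $A_j=[d\psi^j(z^0)]^{-1}$ up to linear factors that are bounded above and below (the indicatrices of $M_j$ at $z^0$ decrease to that of the hyperbolic $M$). The fatal gap is your two-sided ball estimate: the claim that the rescaled domains $\Om_j$ are pinched between two \emph{fixed} balls centred at $q^j$, hence uniformly bounded, is false, and with it fall the Blaschke selection step, the deduction ``bounded and convex, hence complete hyperbolic,'' and the asserted normality of $\{G_j\}$. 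Already for $\Om=\De$, $\psi^j(z)=(z+a_j)/(1+a_jz)$ with $a_j\uparrow 1$, $p^0=1$, one has $A_j=(1-a_j^2)^{-1}$ and $A_j(\De-1)=\{w:2\Re w+(1-a_j^2)\vert w\vert^2<0\}$, a disc of radius $(1-a_j^2)^{-1}\to\infty$ whose local Hausdorff limit is the half-plane $\{\Re w<0\}$; similarly for $\mf{B}^n$ the scaled ellipsoids have tangential semi-axes $\sim(1-\vert p^j\vert^2)^{-1/2}\to\infty$. The indicatrix controls only an \emph{inner} ball; there is no outer one. Consequently a family $\{G_j\}$ normalized by $G_j(p^k)=0$, $dG_j(p^k)=\mathrm{Id}$ but mapping into unbounded convex domains is not automatically normal: this normality is exactly the technical core of the paper's proof, supplied by Kim's midpoint-map identity \eqref{der-reln}, the resulting estimate \eqref{Kimeq}, and the chain-of-balls argument of Lemma \ref{omeglem} (note your $G_j$ is precisely the paper's $\omega_j$ precomposed with $(\psi^k)^{-1}$). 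Likewise, hyperbolicity and tautness of $\hat\Om$ cannot come from boundedness; the paper obtains the needed nondegeneracy by exhibiting $\hat\Om$ as the image of the hyperbolic $M$ under the limit embedding $\sigma=\lim\sigma_j$.

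A second substantive omission: you merely ``record'' that $q^j=A_j(p^j-p^0)$ stays bounded, but this is the paper's Lemma \ref{sigmjnrml} and is genuinely nontrivial --- it is proved there by a dichotomy: if $\|A_j(t-p^j)\|\to\infty$ for \emph{every} $t\in\pa\Om$ (which follows for $t\neq p^0$ from part (i)), then $\pa\,\omega_j(M_j)$ leaves every compact set and $\omega(M)=\mathbb{C}^n$, contradicting the hyperbolicity of $M$; hence some, necessarily $t^0=p^0$, has bounded rescaled orbit. Without this, $A_j(\Om-p^0)$ need not subconverge to a domain with interior base point and your (ii) never gets started. Finally, beware a circularity in your surjectivity step: you invoke the stability \eqref{E4}, but in the non-smooth convex/minimal-ball case the paper derives \eqref{E4} (via Zimmer) only \emph{after} Proposition \ref{scalprop} furnishes the local Hausdorff convergence, so it may not be assumed inside this proof. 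In short, your skeleton (rescale by the derivative of the exhaustion maps, use the no-variety hypothesis for blow-up, use completeness of $M$ for the limit) matches the paper's, but the two load-bearing estimates --- locally uniform bounds on $d\omega_j$ in place of the false outer-ball bound, and boundedness of $A_j(p^0-p^j)$ --- are missing, and each requires the arguments of Lemmas \ref{omeglem} and \ref{sigmjnrml} respectively.
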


\begin{proof}
We contend that we may take $A_j$ to be the linear transformations $A_j: \mathbb{C}^n \to T_{z^0}M$ given by 
$A_j = [d \psi^j(z^0) ]^{-1}$. Let us break the proof into various steps in line with Kim's article \cite{Kim1} and define the affine-linear maps
$\tilde{A}_j : \mathbb{C}^n \to T_{z^0}M$ by 
\[
\tilde{A}_j(z) = \big[d \psi^j(z^0) \big]^{-1}\big(z- \psi^j(z^0)\big), 
\]
and $\varphi:  T_{z^0}M \times  T_{z^0}M \to  T_{z^0}M$, the mid-point map $\varphi(z, \zeta) = (z+\zeta)/2$.
We may identify the tangent space $T_{z^0}M$ to $M$ at the fixed base point $z^0$ with $\mathbb{C}^n$, even if in a non-canonical manner; this will be useful in some of the computations in the proofs and may well be done in what follows without further mention. Indeed, for the rest of this subsection we work with a fixed locally finite atlas $\mathcal{A}$ for $M$ and all local calculations are to be understood in the coordinates provided by charts of $\mathcal{A}$.
The first step for the proof of the proposition is then the following
\begin{lem} \label{omeglem}
The sequence $\omega_j : M_j \to T_{z^0}M$ defined by
\[
\omega_j(z) = \big[d \psi^j(z^0) \big]^{-1} \big( \psi^j(z) - \psi^j(z^0) \big),
\]
forms a normal family; moreover, any subsequential limit of $\{\omega_j\}$ is a biholomorphism of $M$ onto its image in $T_{z^0}M$.
\end{lem}

\noindent Note that $\omega_j(z^0)$ is the origin in $T_{z^0}M$ and to prove this lemma, it is enough to show that the sequence 
of `sup-norms' of their derivatives is bounded above
uniformly on compact subsets of $M$. To demonstrate this, first let $K_0$ (which we may well assume contains the point $z^0$) be a compact subset of $M$; then $K_0 \subset M_j$ for all large $j$ and $\omega_j$ alongwith all their derivatives are well-defined on $K_0$ for all $j \gg 1$ as well. As in \cite{Kim1} then,
choose another 
compact $K \subset M$ which contains $K_0$ in its interior, and  define the sequence of maps $F_m : K \times K \to M_m$ 
for all $m$ large, by
\[
F_m(z, \zeta) = \omega_m^{-1} \circ \varphi \circ (A_m \times A_m) \circ (\psi_m \times \psi_m).
\]
Note that all the $F_m$'s map the point $z^0$ to the origin and as they map into $\Omega$ which being a bounded convex domain is in particular complete hyperbolic (hence taut), $\{F_m\}$ is a normal family. Also note that we may rewrite the definition of the $F_m$'s to say that they 
satisfy the following relations with respect to $\omega_m$: 
\[
(\omega_m \circ F_m) (z, \zeta) = \frac{\omega_m(z) + \omega_m(\zeta)}{2}, 
\]
for each $m \in \mathbb{N}$. As in \cite{Kim1}, we can show after some computations that we have the following equations holding for the second-order derivatives of the $l$-th component function of $\omega_m$ for each $l=1,\ldots,n$, 
\begin{equation} \label{der-reln}
\frac{1}{2} \frac{\partial^2 \omega^l_m}{\partial z^k \partial z^i} (z) =
\sum_{j=1}^{n} \left( \frac{\partial^2 F^j_m}{\partial z^k \partial z^i} - \frac{\partial^2 F^j_m}{\partial \zeta^k \partial z^i} \right) (z,z)
\frac{\partial \omega^l_m}{\partial z^j}(z).
\end{equation}
From this, it can be deduced  that for all $j$, we have the following uniform bounds in terms of the sup-norms on $K$, for some positive constant $C_{K_0}$ (independent of $j$):
\begin{equation}\label{Kimeq}
\| d^2 \omega_j \|_{K_0} \leq C_{K_0} \| d \omega_j \|_{K_0} 
\end{equation}
where the left-hand-side denotes the max of the sup-norms of the various second-order partial derivatives of the component functions of the mapping $\omega_j$ on $K_0$ and likewise for the right; indeed for the first derivative appearing on the right, we shall work with the equivalent norm 
given by the maximum of the operator norms $\vert d \omega_j(z) \vert_{\rm op}$ for $z$ varying in the compact $K_0$. More importantly, while the constant $C_{K_0}$ may seem to depend on the  chart in which we obtained the previous equality, note that we may assume it to be independent of the  chart owing to the fact that each point of $M$ lies 
atmost in a finite number of charts of the atlas $\mathcal{A}$ and so, we may take $C_{K_0}$ to be the maximum of the constants arising when working with different charts containing $z$ as in (\ref{der-reln}); in essence, while the magnitudes of various partial derivatives 
depend on the coordinate chart, the fact that an inequality of the form (\ref{Kimeq}) holds is independent of the chart.
From this, we now further derive the following uniform bound on the first derivatives of the $\omega_j$'s to get the normality of $\{\omega_j\}$.

\begin{lem}
For some positive constant $C'_{K_0}$ depending only on $K_0$ and not on $j$, we have $\|d \omega_j \| \leq C'_{K_0}$
for all $j$.
\end{lem}

\begin{proof}
First note that it suffices to show this only for those compacts which are closures (in $M$) of relatively compact open connected subsets of  $M$ (because we may always exhaust $M$ by a sequence of 
relatively compact subdomains, by considering for instance, balls with respect to the Kobayashi distance centered at a fixed point and of a strictly increasing sequence of radii; the relative compactness of such balls is guaranteed by the complete hyperbolicity of $M$ and their connectedness by the fact that the Kobayashi distance is inner). We shall therefore henceforth assume that all compacts considered in the remainder of this proof are of this kind (i.e., is the closure of a relatively compact subdomain of $M$) and thereby in particular, connected; we shall also assume that such compacts contain the chosen base-point $z^0$. 
\smallskip\\
\noindent Now, let $U_z$ denote a relatively compact neighbourhood of the point $z \in K_0$ where we have a holomorphic chart. As the $U_z$'s form an 
open covering of $K_0$ as $z$ runs through $K_0$, we may extract a finite subcover whose union $U$ in particular then, is a relatively compact
subset of $M$ containing $K_0$; observe that $U$ is also connected. Let $K_1=\overline{U}$ and pick any point $z' \in K_0$.
Let $\gamma=\gamma^z(t)$ be any path within $U$ which joins $z^0$ to $z'$. Cover this path by finitely many relatively compact domains in $M$ say $N$-many, labelled $B_0, B_1, \ldots, B_{N-1}$ where each $B_k$ (for $0 \leq k \leq N-1$) is the biholomorphic image of a ball in $\mathbb{C}^n$
of some radius  
less than $1/4C_{K_1}$ where $C_{K_1}$ is the constant coming from (\ref{Kimeq}) applied to $K_1$; we may well assume that these domains are labelled
so that they form a chain $B_k \cap B_{k+1} \neq \phi$ for all $k$.\\
 
\noindent  We start with the first ball $B_0$ centered at the base-point $z^0$ where
 we have $d \omega_j (z^0) = {\rm identity}$ and consequently 
$\vert d \omega_j (z^0) \vert_{\rm op} = 1$ for {\it all} $j$; here $\vert \cdot \vert_{\rm op}$ denotes the operator norm
of the linear operator given by the derivative $D \omega_j(z^0)$. Then, identifying $B_0$ with its image in $\mathbb{C}^n$ for conciseness in writing by suppressing the chart maps involved, we have
\begin{align*}
\vert d \omega_j(z) \vert_{\rm op} &\leq \vert d \omega_j(z) - d\omega_j(z^0) \vert_{\rm op}  + \vert d \omega_j(z^0) \vert_{\rm op} \\
&\leq \| d^2\omega_j \|_{\overline{B}_0} \vert z - z^0 \vert + \vert d \omega_j(z^0) \vert_{\rm op} \\
& \leq C_{\overline{B}_0} \| d \omega_j \|_{\overline{B}_0} \vert z - z^0 \vert + \vert d \omega_j(z^0) \vert_{\rm op} 
\end{align*}
Then using the fact that $\vert d \omega_j(z^0) \vert_{\rm op}=1$ as already noted and more importantly, the fact that $z \in B_0$ -- thereby 
that $\vert z - z^0 \vert < 1/4C_{K_1} < 1/2C_{K_1}$ -- we may derive from the above inequality by taking supremum over $z \in \overline{B_0}$
and thereafter transposing a term on the right of the inequality to the left, that 
\[
\frac{\| d \omega_j(z) \|_{\overline{B}_0}}{2} \leq \vert d \omega_j(z^0) \vert_{\rm op} =1.
\]
That is, we have for all $j$ that  $\vert d \omega_j(z) \vert_{\rm op} \leq 2$ holds for all $z \in B_0$, in particular at a point 
$z^1 \in B_0 \cap B_1$. We run the above argument again, now for $z$ in the ball $B_1$ and $z^0$ replaced by $z^1$ i.e., apply 
(\ref{Kimeq}) as before to get
\begin{align*}
\vert d \omega_j(z) \vert_{\rm op} &\leq \vert d \omega_j(z) - d\omega_j(z^1) \vert_{\rm op}  + \vert d \omega_j(z^1) \vert_{\rm op} \\
&\leq  \| d^2\omega_j \|_{\overline{B}_1} \vert z - z^1 \vert + \vert d \omega_j(z^1) \vert_{\rm op} \\
& \leq C_{K_1} \| d \omega_j \|_{\overline{B}_1} \vert z - z^1 \vert + \vert d \omega_j(z^1) \vert_{\rm op} 
\end{align*}
Now use the fact that $z,z^1$ both lie in $B_1$ means that the distance $\vert z - z^1 \vert$ (as measured in the local holomorphic chart) between them is at-most $2/4C_{K_1}$ to get
\[
\frac{\| d \omega_j(z) \|_{\overline{B}_1}}{2} \leq \vert d \omega_j(z^0) \vert_{\rm op} =2.
\]
That is, we have for all $j$ that $\vert d \omega_j(z) \vert_{\rm op} \leq 2^2$ holds for all $z \in B_1$. Proceeding inductively and running the above (only finitely many times) argument for each of the balls $B_k$, we conclude for all $z$ in their union that for all $j$, we have
 $\vert d \omega_j(z) \vert_{\rm op} \leq 2^N$. In particular, we have this holding for $z'$, the terminal point of $\gamma$. But then as $z'$ was an arbitrarily chosen point of $\gamma$, we get that for all $j$, we have
\[
\| d \omega_j \|_{K_0} \leq 2^N.
\]
Note that $N$ depends on $K_0$: it may be thought of as the minimum number of balls of radius $1/4C_{K_1}$ required to cover $K_1$(which was essentially a thickening of $K_0$); however, it is independent of $j$ and so we are done.
\end{proof}


Hence, we conclude that $\omega_j$ forms a normal family of holomorphic mappings. To complete the proof of Lemma \ref{omeglem}, we still need to show that every subsequential limit $\omega$ of the $\omega_j$'s gives a holomorphic embedding 
of $\Omega$ (again in $T_{z^0}M \simeq \mathbb{C}^n$). This can be seen as follows. Firstly, assume after passing to a subsequence that the 
$\omega_j$'s converge uniformly on compacts to $\omega$. Note that as the $\omega_j$'s are all holomorphic embeddings of 
the subdomains $D_j$ of $M$, their Jacobian determinants are all nowhere vanishing on $\Omega$ to which when we apply Hurwitz's theorem,
we deduce (by noting also that $\omega_j(z^0)=0$ and $d \omega_j (z^0) ={\rm identity}$) that the Jacobian determinant of $\omega$ must be nowhere vanishing as well, thereby (by the inverse function theorem) that $\omega$ is an open map. However, the uniform convergence of the $\omega_j$'s on compact subsets of $M$ actually implies that $\omega$ is (globally) one-to-one i.e, $\omega$ is a 
biholomorphism of $M$ onto $\omega(M)\subset T_{z^0}M$. Moreover, it can be shown that:

\begin{lem} \label{eigen}
Every eigenvalue of $d \psi^j(z^0)$ and $\| d \psi^j(z^0) \|$ tends to $0$ as $j \to \infty$.
\end{lem}

\begin{proof}
We intend to prove this by contradiction and since the norm of a linear operator dominates the magnitude of all the eigenvalues, we may suppose (to obtain a contradiction) that there exists a vector $v \in T_{z^0}M$ such that for all $j \gg 1$, we have the lower bound:
\begin{equation}\label{psijlowbd}
\lim_{j \to \infty} \| d \psi^j(z^0)v \| \geq \epsilon>0.
\end{equation}
Let $f: \Delta \to M$ be any (non-constant) analytic disc passing through $z^0$ and $\Delta_0$ a small disc about $0 \in \Delta$ such that 
the $\psi^j$'s for all $j \gg 1$ are all well-defined on the image $f(\Delta_0)$. The sequence of analytic discs $\psi^j \circ f$ is then
well-defined for all $j \gg 1$ on $\Delta_0$ and being mapping into the bounded domain $\Omega$, admits a subsequence, that converges uniformly on compact subsets of $\De_0$ to a holomorphic map $\Phi:\De_0 \to \ov \Om$. Since $\Phi(0)=p^0 \in \partial \Omega$ and $\Om$ is taut, $\Phi$ must map $\Delta_0$ entirely into $\partial \Omega$, which however we know does not admit any non-constant analytic disc. Therefore, $\Phi$ must be constant. But then, 
this contradicts the fact $\|\Phi'(0)\| \geq \epsilon>0$, as follows from (\ref{psijlowbd}), and finishes the proof.
\end{proof}

As the next step, we show 

\begin{lem} \label{omegcvgce}
After possibly passing to a subsequence of $\{\omega_j\}$ which we may well assume (by the foregoing) to be convergent to a holomorphic embedding 
$\omega:M \to T_{z^0}M$, we have that the sequence of domains in $T_{z^0}M$ 
given by their images, namely $\{\omega_j(M_j)\}$, converges as $j \to \infty$ in the local Hausdorff sense to $\omega(M)$; also, 
$\omega(M)$ is convex.
\end{lem} 

\begin{proof}
Let us begin by remarking that to show containment of compacts as in the definition of local Hausdroff convergence, it suffices to restrict attention to those $K$ which are closures of relatively compact domains in $M$,
indeed a sequence of such compacts which form an exhaustion of $M$. Getting to the verification of the asserted convergence of the images 
$\omega_j(M_j)$ more precisely now, fix a pair of such compacts $K_1, K_2  \subset \omega(M)$ with $K_1$ contained in the interior $K_2^0$  of 
$K_2^0$. We need to show that $K_1 \subset \omega_j(M_j)$ for all $j \gg 1$. For this, first write $K_1=\omega(S_1), K_2= \omega(S_2)$ for 
some $S_1,S_2 \subset M$. As $\omega$ is a holomorphic embedding, it follows that $S_1,S_2$ are also compact and $S_1$ is contained in the 
the domain $S_2^0$, the interior of
$S_2$ (it may also be noted that $S_1,S_2 \subset M_j$ for all $j \gg 1$). 
As $\omega_j$'s are all holomorphic embeddings in particular open maps, we observe that we may follow the arguments of proposition 5 of chapter-5 of \cite{Nar} applied to the mappings $\omega_j$ restricted to $S_2^0$, to conclude that $\omega_j (M_j) \ni \omega(S_1) = K_1$ for all 
$j \gg 1$, finishing one half of the verification. To complete the proof of the asserted local Hausdorff convergence then, we
now conversely fix a compact $K \subset T_{z_0}M$ with the property that $K \subset \omega_j(M_j)$ holds for all $j \gg 1$; indeed, observe that for this part of the argument, we may assume as well that $K$ is contained in the interior $U$ of the intersection of all the $\omega_j(M_j)$'s.
We shall show now that
$K \subset \omega(M)$. To this end, write
$K=\omega_j(S_j)$ for $S_j \subset M_j$ (it may be noted that as each $\omega_j$ is an embedding, the $S_j$'s are compact, though this is not needed here). Fix any $k \in K$; note then that we have for all $j \gg 1$, points $s_j \in S_j$ with $\omega_j(s_j) = k$. We observe that 
the complete hyperbolicity of $M$ forces $\{s_j\}$ to be compactly contained in $M$; indeed,
\[
d_M(s_j, z^0) \leq d_{M_j}(s_j, z^0) \leq d_{\omega_j(S_j)}(\omega_j(s_j),\omega_j(z^0)) 
= d_{\omega_j(M_j)}(k, 0) \leq d_U(k,0).
\]
Note that by in the above chain of inequalities on the Kobayashi distances of the various associated domains, the right most member is independent of $j$, showing that the $s_j$ remains within a fixed distance of the base point $z^0$ in $M$. Thus by completeness of $M$, after passing to a subsequence, we may assume $s_j \to s_0 \in M$ thereby, $\om(s_0)= \lim \om_j(s_j) =k$ and hence, $k \in \om(M)$. As $k$ was an arbitrary point 
of $K$, this finishes the proof of $K \subset \omega(M)$ and therewith of the asserted local Hausdorff convergence.
\smallskip\\
For the remaining assertions about convexity, note that from the definition of the $\omega_j$'s, particularly from $\psi^j(M_j) = \Omega$ for all $j$,  we have
\begin{equation} \label{omegjimages}
\omega_j(M_j) = \big[d \psi^j(z^0)\big]^{-1}(\Omega - p^j)
\end{equation}
where as we know $p^j \to p \in \partial \Omega$. While the above equation makes the convexity of the $\omega_j(M_j)$'s -- which are affine-linear images of $\Omega$ by (\ref{omegjimages}) -- apparent, their local Hausdorff convergence to $\omega(M)$ established above, then 
ensures the convexity of $\omega(M)$ as well.%
\end{proof}

Finally, we show

\begin{lem} \label{sigmjnrml}
The sequence of holomorphic mappings $\sigma_j : M_j \to T_{z^0}M$ defined by 
\begin{equation} \label{sigpsireln}
\sigma_j(z) = \big[d\psi^j(z^0)\big]^{-1}\big(\psi^j(z) - p^0\big)
\end{equation}
is a normal family. Moreover, any subsequential limit is a holomorphic embedding of $M$ into $T_{z^0}M$.
\end{lem}

\begin{proof}
For the proof of this lemma, first note the relation between the maps $\sigma_j$ defined here and the $\omega_j$'s that we had earlier:
\[
\sigma_j(z) = \omega_j(z) + \big[d\psi^j(z^0)\big]^{-1}(p^j-p^0)
\]
which means in particular that for each $j$, the difference between $\sigma_j(z)$ and $\omega_j(z)$ is independent of $z$. This yields the normality asserted in the lemma as soon as we can establish that the 
sequence of points $[d\psi^j(z^0)]^{-1}(p^0-p^j)$ (where as we know, $p^j = \psi^j(z^0)$) in $T_{z^0}M$, is a bounded sequence. To argue this by contradiction, we suppose that this sequence is unbounded and after passing to a subsequence (which we shall tacitly keep doing wherever required in the remainder of this proof) assume that 
\[
\lim_{j \to \infty} \Big\| \big[d\psi^j(z^0)\big]^{-1}(p^0 - p^j) \Big\| = \infty.
\]
We claim that a similar statement (more precisely (\ref{bdyinfty}) below) holds for all other points $t \in \partial \Omega$ as well. To see this claim, first recall Lemma \ref{eigen} according to which, the minimum modulus of the eigenvalues of the linear operators $A_j=[d\psi^j(z^0)]^{-1}$ diverges to infinity. This means that the $A_j$'s diverges uniformly on compacts of $\mathbb{C}^n \setminus \{0\}$; considering in particular, the sequence of points $t-p^j$ together with its limit $t-p^0$, which are all contained (compactly) within $\mathbb{C}^n \setminus \{0\}$,  we deduce that: for each point $t \in \partial \Omega$ for $t \neq p$ also, we have that 
\begin{equation} \label{bdyinfty}
\lim_{j \to \infty} \Big\| \big[d\psi^j(z^0)\big]^{-1}(t - p^j) \Big\| = \infty.
\end{equation}
Note that as $t$ varies in the compact set $\partial \Omega$, the points appearing in the above namely, $\tilde{A}_j(t)=[\psi^j(z^0)]^{-1}(t - p^j)$ for each fixed $j$, runs through the boundary of the domain $[d\psi^j(z^0)]^{-1}(\Omega - p^j)$ i.e., the image of $\partial \Omega$ under the map $\tilde{A}_j$. But then equation (\ref{omegjimages}) gives the following relation of this to the boundary of 
the images of $\omega_j$'s namely,
\[
\partial \omega_j(M_j) =  \big[d\psi^j(z^0)\big]^{-1}(\partial \Omega - p^j).
\]
Therefore, by what we noted above from (\ref{bdyinfty}), $\omega_j(M_j)$ converges in the local Hausdorff sense, to the entire $\mathbb{C}^n$ i.e.,
$\omega(M) = T_{z^0}M \simeq \mathbb{C}^n$ which however is impossible as $M$ is hyperbolic and $\omega$ was already verified to be a holomorphic embedding. 
Thus we conclude the existence of a $t^0 \in \partial \Omega$ such that for some positive constant $C$ we have
\begin{equation} \label{tildAbd}
\big\| \tilde{A}_j (t^0) \big\| \leq C
\end{equation}
for all $j$; here, as we know $\tilde{A}_j$ are the affine-maps given by $\tilde{A}_j(z) = A_j(z-p^j)=[d \psi^j (z^0)]^{-1}(z-p^j)$. As the $A_j$'s diverge uniformly on compacts of $\mathbb{C}^n \setminus \{0\}$, it follows that the only point $t^0 \in \partial \Omega$ which can satisfy 
(\ref{tildAbd}) is $t^0=p^0$. Therefore, $\tilde{A}_j(p^0) = [d\psi^j(z^0)]^{-1}(p^0-p^j)$ is a bounded sequence, which was what was pending to be established  to obtain the normality of the family $\sigma_j$'s as noted at the outset. Recalling (\ref{sigpsireln}), we also get that every subsequential limit of the $\sigma_j$'s is a biholomorphism of $M$ onto $\sigma(M)$, since the same is true of the $\omega_j$'s by Lemma \ref{omeglem}. 
\end{proof}

\begin{lem}\label{A_j-sig_j}
We have for all $j$ that 
\[
A_j(\Omega-p^0) = \sigma_j(M_j)
\]
\end{lem}

\begin{proof}
We unravel the set on the left-hand-side as follows by using the following definition of the difference between a 
pair of subsets $A,B$ in $\mathbb{C}^n$: $A - B =\{ a -b \; : \; a \in A,\; b \in B \}$. Then,
\begin{align*}
A_j(\Omega-p^0) &= \big[d \psi^j(z^0)\big]^{-1} (\Omega - p^0) \\
&= \big[d \psi^j(z^0)\big]^{-1} \big( \psi^j(M_j) -p^0 \big) \\
&= \big[d \psi^j(z^0)\big]^{-1} \Big( \big(\psi^j(M_j) - \psi^j(z^0)\big) + \big(\psi^j(z^0) - p^0\big) \Big) \\
&=  \omega_j(M_j) - (\omega_j - \sigma_j) (M_j) \\
&= \sigma_j(M_j)
\end{align*}
The penultimate equality follows by noting that $[d \psi^j(z^0)]^{-1}  (\psi^j(M_j) - \psi^j(z^0))=\omega_j(M_j)$, by using the linearity of $ [d \psi^j(z^0)]^{-1}$ and, the fact that $(\omega_j - \sigma_j)(z)$ is actually independent of $z$ with the images of these constant maps forming the sequence $[d\psi^j(z^0)]^{-1}(\psi^j(z^0) - p^0)$.
\end{proof}
This essentially finishes the proof of Proposition \ref{scalprop}. The assertion in the proposition that the scaled domains $A_j(\Omega-p^0)$ converges in the local Hausdorff sense to a domain $\hat{\Omega} \subset T_{z^0}M \simeq  \mathbb{C}^n$ now follows from the last lemma combined with the fact about normality of the $\sigma_j$'s as in lemma \ref{omegcvgce}; that $\hat{\Omega}$ is biholomorphic to $\Omega$ itself
follows from the fact that the limits of $\{ \sigma_j \}$ are holomorphic embeddings.
\end{proof}
\noindent We define the scaling maps by $A^jz:=A_j(z-p^0)$, and set $\Om_j:=A^j(\Om)$ and $\Om_{\infty}:=\hat{\Om}$. Note that $\Om_{\infty}$, being biholomorphic to the bounded convex domain $\Om$, is complete hyperbolic and hence taut. Also, by Lemma~\ref{sigmjnrml},
\[
q^j:=A^j(p^j)=A_j(p^j-p^0)=\big[d\psi^j(z^0)\big]^{-1} \big(\psi^j(z^0)-p^0\big)=\sigma_j(z^0),
\]
converges after passing to a subsequence, to some $q^0 \in \lim \sigma_j(M_j)$. By Lemma~\ref{A_j-sig_j}, $ \lim \sigma_j(M_j)=\lim A_j(\Om-p^0)=\hat{\Om}=\Om_{\infty}$ and so $q^0 \in \Om_{\infty}$, as well.
\medskip\\
This finishes the description of scalings for various classes of domains and we are now ready to provide
\begin{proof}[Proof of Proposition~\ref{k-stability}]
The normality of the mappings $\ti \psi^j$ follows from the proof of Theorem~3.11 of \cite{TT} (when $\Om$ is a Levi corank one domain), Lemma~3.1 of \cite{Gaussier-1997} (when $\Om$ is a convex finite type domain), Proposition 4.2 of \cite{Zimmer} (for convex infinite type domains and the minimal ball). Moreover,
\begin{equation} \label{E7}                                                                                                                                                                                                                                                      \tilde{\psi^j} (z^0) = A^j \circ \psi^j (z^0) = A^j(p^j) \rightarrow q^0,
\end{equation}
by construction, and so the tautness of $ \Omega_{\infty} $ forces that the uniform limit $ \tilde{\psi} $ is a holomorphic mapping from $ M $ into $ \Omega_{\infty} $.

We now prove \eqref{E4} and one of the key ingredient is the stability of the infinitesimal Kobayashi metric, i.e.,
\begin{equation} \label{E5}
F_{\Omega_j} (\cdot, \cdot) \rightarrow F_{\Omega_{\infty}} (\cdot, \cdot)
\end{equation} 
uniformly on compact sets of $ \Omega_{\infty} \times \mathbb{C}^n $. The above statement is established by examining the limits of holomorphic mappings $ f^j : \Delta \rightarrow \Omega_j $ that almost realize $ F_{\Omega_j} (\cdot, \cdot) $. It is known (refer Theorem 3.11 of \cite{TT} for Levi corank one domains $ \Omega $, \cite{Gaussier-1997} and Proposition 4.2 of \cite{Zimmer} for convex domains $ \Omega $, and that $\Omega_j \subset 2 \Omega_{\infty} $ for all large $ j $ for strongly pseudoconvex polyhedral domains $\Om$) that $ \{f^j\} $ is normal on $ \Delta $ and, after passing to a subsequence if necessary, converges to a holomorphic mapping $ f: \Delta \rightarrow \Omega_{\infty} $. Then the uniform limit $ f $ is a competitor for $ F_{\Omega_{\infty}} (\cdot, \cdot) $.

Fix $ \epsilon > 0 $ and let $ \gamma : [0,1] \rightarrow \Omega_{\infty} $ be a piecewise $C^1$-smooth path in $ \Omega_{\infty} $ such that $ \gamma(0)= \tilde{\psi} (z^1) $, $ \gamma(1)= \tilde{\psi} (z^2) $ and
\[
 \int_0^1 F_{\Omega_{\infty}} \left( \gamma(t), \dot{\gamma}(t) \right) \leq d_{\Omega_{\infty}} \big( \tilde{\psi} (z^1),\tilde{\psi} (z^1) \big) + \epsilon/2.
\]
Notice that the trace of $ \gamma $ is compactly contained in $ \Omega_{\infty} $ and hence, the trace of $ \gamma $ is contained uniformly relatively compactly in $ \Omega_j $ for all large $ j $. Moreover, it follows from (\ref{E5}) that
\begin{alignat*}{3}
\int_0^1 F_{\Omega_j} \left( \gamma(t), \dot{\gamma}(t)\right) \leq \int_0^1 F_{\Omega_{\infty}} \left( \gamma(t), \dot{\gamma}(t)\right) + \epsilon/2 \leq d_{\Omega_{\infty}} \big(\tilde{\psi} (z^1), \tilde{\psi} (z^2) \big) + \epsilon/2,
\end{alignat*}
and consequently that
\begin{alignat*}{3}
d_{\Omega_j} \left( \tilde{\psi} (z^1), \tilde{\psi} (z^2) \right) \leq \int_0^1 F_{\Omega_j} \left( \gamma(t), \dot{\gamma}(t)\right) \leq d_{\Omega_{\infty}} \big(\tilde{\psi} (z^1), \tilde{\psi} (z^2) \big) + \epsilon/2,
\end{alignat*}
which, in turn, implies that
\[
\limsup_{j \rightarrow \infty} d_{\Omega_j} \big( \tilde{\psi} (z^1), \tilde{\psi} (z^2) \big) \leq d_{\Omega_{\infty}} \big( \tilde{\psi} (z^1), \tilde{\psi} (z^1) \big)
\]
as required.

Note that for convex infinite type domains and the minimal ball, Theorem 4.1 of \cite{Zimmer} guarantees the stability of the integrated Kobayashi distance under scaling, thereby rendering (\ref{E4}).
\end{proof}

\subsection{Step II}
We establish that $\ti \psi: M \to \Om_{\infty}$, where $\ti \psi$ is a limit of $\ti \psi^j=A^j\circ \psi^j$ given by Proposition~\ref{k-stability}, is a biholomorphisim. The most natural candidate for the inverse of $\ti \psi$ is a limit, if exists, of the backward scaling sequence
\begin{equation} \label{E6}
\tilde{\phi^j} := \left( \tilde{\psi^j} \right)^{-1} = \left( A^j \circ \psi^j \right)^{-1} : \Omega_j \rightarrow M_j \subset M.
\end{equation}
However, it is not \textit{a priori} evident that this sequence has a limit as the target domain $D$ need not be taut and this is the principal difficulty in proving that $\ti \psi$ is invertible. So first we establish that $\ti \psi $ is injective which will enable as to identify $M$ with $\ti\psi(M)$ which is a subdomain of the taut domain $\Om_{\infty}$ and eventually to get hold on a limit of the above sequence.

To see that $\ti \psi$ is injective, let $ z^1 $ and $ z^2 $ be any two points in $ M $ and $ d_M( \cdot, \cdot) $ denote the integrated Kobayashi distance on $ M $. Then for each $ j $,
\begin{equation} \label{E1}
 d_M (z^1, z^2) =  d_M \left( \tilde{\phi^j} \circ \tilde{\psi^j} (z^1), \tilde{\phi^j} \circ \tilde{\psi^j} (z^2) \right).
\end{equation}
The distance decreasing property of the holomorphic mappings implies that 
\begin{equation} \label{E2}
 d_M \left( \tilde{\phi^j} \circ \tilde{\psi^j} (z^1), \tilde{\phi^j} \circ \tilde{\psi^j} (z^2) \right)
 \leq d_{\Omega_j} \left( \tilde{\psi^j} (z^1), \tilde{\psi^j} (z^2) \right),
\end{equation}
and the triangle inequality yields
\begin{equation} \label{E3}
 d_{\Omega_j} \left( \tilde{\psi^j} (z^1), \tilde{\psi^j} (z^2) \right) \leq d_{\Omega_j} \left( \tilde{\psi^j} (z^1), \tilde{\psi} (z^1) \right) + d_{\Omega_j} \left( \tilde{\psi} (z^1), \tilde{\psi} (z^2) \right)+ d_{\Omega_j} \left( \tilde{\psi} (z^2), \tilde{\psi^j} (z^2) \right).
\end{equation}
Note that the terms $ d_{\Omega_j} \left( \tilde{\psi^j} (z^1), \tilde{\psi} (z^1) \right)  $ and $ d_{\Omega_j} \left( \tilde{\psi} (z^2), \tilde{\psi^j} (z^2) \right) $ converge to $ 0 $. Indeed, observe that $ \tilde{\psi^j} (z^1) \rightarrow \tilde{\psi} (z^1)$ and the domains $ \Omega_j $ converge to $ \Omega_{\infty} $. As a consequence, there is a small Euclidean ball $ B \left( \tilde{\psi} (z^1), r \right) $ centered at $ \tilde{\psi} (z^1) $ which contains $ \tilde{\psi^j} (z^1) $ for all $ j $ large and which is contained in $ \Omega_j $ for all $ j $ large, where $ r > 0 $ is independent of $ j $. It follows that 
\[
 d_{\Omega_j} \left( \tilde{\psi^j} (z^1), \tilde{\psi} (z^1) \right) \leq C | \tilde{\psi^j} (z^1)- \tilde{\psi} (z^1) |, 
\]
where $ C > 0 $ is independent of $j $ and consequently that 
\[
d_{\Omega_j} \left( \tilde{\psi^j} (z^1), \tilde{\psi} (z^1) \right) \rightarrow 0 
\]
as $ j \rightarrow \infty $. A similar argument shows that 
\[
d_{\Omega_j} \left( \tilde{\psi} (z^2), \tilde{\psi^j} (z^2) \right) \rightarrow 0 
\]
as $ j \rightarrow \infty $. Finally, by Proposition~\ref{k-stability}, the middle term $ d_{\Omega_j} \left( \tilde{\psi} (z^1), \tilde{\psi} (z^2) \right) $ on the right hand side of the inequality (\ref{E3}) is dominated by $d_{\Om_{\infty}}(\ti \psi(z_1), \ti \psi(z_2))$. Therefore, combining the inequalities (\ref{E1}), (\ref{E2}) and (\ref{E3}) and letting $ j \rightarrow \infty $ gives 
\begin{equation*} 
 d_M (z^1, z^2) \leq d_{\Omega_{\infty}} \left( \tilde{\psi} (z^1), \tilde{\psi} (z^2) \right). 
\end{equation*}
The hyperbolicity of $ M $ guarantees that $ z^1 = z^2 $ whenever $ \tilde{\psi} (z^1) = \tilde{\psi} (z^2) $. Hence, $ \tilde{\psi} : M \rightarrow \Omega_{\infty} $ is injective.

We now prove that $ \tilde{\psi} $ is surjective by verifying that the inverse scaling sequence $ \tilde{\phi^j} $ posseses a limit that serves as an inverse to $\tilde{\psi}$. Since $M$ is biholomorphic to $ \tilde{\psi}(M) \subset \Omega_{\infty}$ and $\Om_{\infty}$ is taut, we can consider $ M$ as a submanifold of some taut manifold $ M'$ and therefore,
\[
\tilde{\phi^j} = \left( \tilde{\psi^j} \right)^{-1}: \Omega_j \rightarrow M_j \subset M \subset M'
 \]
admits a subsequence that either converges uniformly on compact subsets of $\Om_{\infty}$ to a holomorphic mapping $ \tilde{\phi} : \Omega_{\infty} \rightarrow M' $ or diverges uniformly on compact subsets of $\Om_{\infty}$. Since $\tilde{\phi^j}(q^j)=z^0$ and $q^j \to q^0$, the latter case cannot occur. Keeping the same notation $ \{ \tilde{\phi^j} \} $ for this convergent subsequence, we have $\ti \phi: \Om_{\infty} \to \ov M\subset M'$. We claim that $\ti \phi: \Om_{\infty} \to M$ and to prove our claim it is enough to show that $\ti \phi$ is an open map. Being the limit of a sequence of biholomorphic maps we know that by Hurwitz's Theorem either the Jacobian of $ \tilde{\phi} $ is never zero or identically zero on $ \Omega_{\infty} $. Thus the claim would be verified once we exclude the second possibility and for this all that is required is to produce an open set in $\ti \phi(\Om_{\infty})$. In fact, we will now show that if $G\subset\subset M_1$ is an open set then $G \subset \ti\phi(\Om_{\infty})$. Indeed, let $z \in G$. Then $z \in D^j$ for all $j$ and let $w^j=\ti\psi^j(z) \in \Om^j$. Then $w^j \to w^0=\ti \psi(z) \in \ti \psi(G)$. Recalling that $\ti \psi : M \to \Om_{\infty}$ is an injective holomorphic map, first we have $\ti \psi(G)$ is an open subset of $\Om_{\infty}$ so that $w^j \in \ti \psi(G)$ for all large $j$, and second we also have $\ti \psi(G) \subset \subset \Om_{\infty}$ so that $\ti \phi^j$ converges uniformly on $\ti \psi(G)$ to $\ti \phi$. Therefore,
\[
z=\ti \phi^j(w^j) \to \ti \phi (w^0) \in \ti \phi(\Om_{\infty}).
\]
This proves that $G \subset \ti \phi(\Om_{\infty})$ and hence our claim.
Now it is immediate that the sequence $ \{ \tilde{\psi^j} \circ \tilde{\phi^j} \} $ of the identity mappings converges to $ \tilde{\psi} \circ \tilde{\phi} $ on $ \Omega_{\infty} $, i.e., 
\[
  \tilde{\psi} \circ \tilde{\phi} (z) = \lim_{j \rightarrow \infty} \tilde{\psi^j} \circ \tilde{\phi^j} (z) = z,
\]
for $ z \in \Omega_{\infty} $. In particular, $ \Omega_{\infty} \subset \tilde{\psi}(M) $ and this proves the surjectivity of $ \tilde{\psi} $. Thus $M $ is biholomorphic to $ \Om_{\infty}$ which completes the proof of Theorem~\ref{T1} (i)-(iv).

\subsection{Proof of Theorem~\ref{T1} (v): When $ \Omega $ has generic piecewise smooth Levi-flat boundary}
A bounded domain $ \Omega \subset \mathbb{C}^n $ is said to have generic piecewise smooth Levi-flat boundary if there exists a neighbourhood $ U $ of $ \overline{\Omega } $ and 
$ C^{\infty}$-smooth real valued functions 
$ \rho_1, \ldots, \rho_k : U \rightarrow \mathbb{R} $, $ k \geq 2 $ such that
\begin{itemize}
 \item $ \Omega = \big\{ z \in U : \rho_1(z) < 0, \ldots, \rho_k(z) < 0 \big\} $,
 \item for $ \{ i_1, \ldots, i_l \} \subset \{1, \ldots, k\} $, $ d \rho_{i_1} \wedge \cdots \wedge d \rho_{i_l} \neq 0 $ for every point $ p $ such that $ \rho_{i_1}(p) = \ldots = 
 \rho_{i_l}(p) = 0 $, 
 \item for $ \{ i_1, \ldots, i_l \} \subset \{1, \ldots, k\} $, $ \partial \rho_{i_1} \wedge \cdots \wedge \partial \rho_{i_l} \neq 0 $ for every point $ p $ such that $ \rho_{i_1}(p) = \ldots = \rho_{i_l}(p) = 0 $, and,
 \item $ \partial \Omega $ is Levi-flat at every smooth boundary point.
\end{itemize}

The main reference and motivation comes from \cite{Fu-Wong}, wherein the authors use the $ c/k $-invariant (with respect to $\De^2$) to show that if $ \Omega \subset  \mathbb{C}^2 $ is a simply connected domain having generic piecewise smooth Levi-flat boundary and non-compact automorphism group, then the $ c/k $-invariant along the non-compact orbit approaches $ 1 $ and consequently $ \Omega $ is biholomorphic to $ \Delta^2 $. For a bounded convex domain $ \Omega \subset \mathbb{C}^n $ with piecewise $ C^{\infty}$-smooth Levi-flat boundary, scaling method is known thanks to K.T. Kim (see \cite{Kim}), and the ideas therein can be adapted for the Union problem for such domains. However, without the convexity assumption, i.e., for a bounded domain $ \Omega $ in $ \mathbb{C}^n $ with generic piecewise smooth Levi-flat boundary, scalings do not seem plausible and therefore S. Fu and B. Wong appeal to the $ c/k $-invariant.

To recall the definition of the $c/k$ invariant on a complex manifold $X$, let $p \in X$ and $z_1, \ldots, z_n$ be local holomorphic coordinates centred at $p$. Set
\begin{equation}\label{v-form-coeff}
\begin{aligned}
 C_X(p) & = \sup \big\{ | \det f'(p)|^2: f \in \mathcal{O} (X, \mf{B}^n), f(p) = 0 \big\},\\
 K_X(p) & = \inf \big\{ | \det g'(0)|^{-2}: f \in \mathcal{O} (\mf{B}^n, X), g(0) = p \big\},
\end{aligned}
\end{equation}
where $f'(p)$ and $g'(0)$ are the Jacobian matrices computed in the coordinates $z$ in $X$ and the standard coordinates in $\mf{C}^n$. It is evident that
\begin{equation}\label{v-form}
\begin{aligned}
c_X(p) & =C_X(p) \left(\frac{i}{2}\right)^n dz_1\wedge d\ov z_1 \wedge \cdots \wedge dz_n \wedge d\ov z_n\\
k_X(p)&=K_X(p) \left(\frac{i}{2}\right)^n dz_1\wedge d\ov z_1 \wedge \cdots \wedge dz_n \wedge d\ov z_n
\end{aligned}
\end{equation}                                    
are well-defined nonnegative $(n,n)$-forms on $X$ known respectively as the Carath\'eodory and Kobayashi-Eisenman volume forms on $X$. Moreover, if $k_X$ is positive everywhere (which is the case if $X$ is hyperbolic, see \cite{Aba} for example) then the ratio $c_X(p)/k_X(p)=C_X(p)/K_X(p)$ is a well-defined function of $p \in X$ that is invariant under biholomorphisms and is referred to as the $c/k$-invariant or the quotient invariant. By an application of the Schwarz lemma, $c_X(p)/k_X(p) \leq 1$ for all $p \in X$ and it is well known that if $c_X(p^0)/k_X(p^0)=1$ for some point $p^0 \in X$ then $X$ is biholomorphic to $\mf{B}^n$ (see \cite{Gr-Wu}). Note that the $c/k$-invariant can also be defined with respect to the unit polydisc $\De^n$, simply replacing $\mf{B}^n$ by $\De^n$ in \eqref{v-form-coeff} and \eqref{v-form}, and this variant of the invariant also enjoys the same properties. To distinguish the two variants, we will refer to them as the $\mf{B}^n$-variant and $\De^n$-variant.

We first prove a stability property of the volume forms that will be used in the sequel. In what follows $C_X(p)$ and $K_X(p)$ will always mean the coefficients of $c_X(p)$ and $k_X(p)$ respectively as in \eqref{v-form} in some fixed coordinate chart centred at $p$. The following result concerns the behaviour of the volume forms for monotone sequences of increasing domains:

\begin{lem} \label{L4}
Let $X_j \subset X_{j+1}$ be an exhaustion of $X$. Then $c_{X_j}(\cdot) \to c_X(\cdot)$ and $ k_{X_j}(\cdot) \to k_X (\cdot)$ for both the $\mf{B}^n$-variant and $\De^n$-variant.
\end{lem}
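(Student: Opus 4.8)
The plan is to prove, for each fixed $p \in X$, the pointwise convergences $C_{X_j}(p) \to C_X(p)$ and $K_{X_j}(p) \to K_X(p)$ of the coefficient functions, computed in a fixed coordinate chart of $X$ centred at $p$ (which restricts to a chart of each $X_j$). Since the volume forms $c,k$ are these coefficients multiplied by the fixed Euclidean $(n,n)$-form, this immediately yields the asserted convergence $c_{X_j} \to c_X$ and $k_{X_j} \to k_X$. I will carry out the $\mf{B}^n$-variant; the $\De^n$-variant is word-for-word the same with $\mf{B}^n$ replaced by $\De^n$ throughout.

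First I would record monotonicity. Because $X_j \subset X_{j+1} \subset X$, restriction of maps shows that every competitor for $C_X(p)$ (resp.\ for $C_{X_{j+1}}(p)$) restricts to one for $C_{X_j}(p)$ with the same Jacobian at $p$, so $C_{X_j}(p) \geq C_{X_{j+1}}(p) \geq C_X(p)$. Dually, every competitor $g \colon \mf{B}^n \to X_j$ for $K_{X_j}(p)$ is also a map into $X_{j+1}$ and into $X$, so the admissible families grow with $j$ and the infima shrink, giving $K_{X_j}(p) \geq K_{X_{j+1}}(p) \geq K_X(p)$. Hence both sequences are decreasing and the limits $\lim_j C_{X_j}(p)$ and $\lim_j K_{X_j}(p)$ exist and dominate $C_X(p)$ and $K_X(p)$ respectively. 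It remains to prove the reverse inequalities.

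For the Carath\'eodory coefficient I would choose near-extremal competitors $f_j \in \mathcal{O}(X_j, \mf{B}^n)$ with $f_j(p)=0$ and $|\det f_j'(p)|^2 \geq C_{X_j}(p) - 1/j$. Fixing a compact exhaustion $L_1 \subset L_2 \subset \cdots$ of $X$, each $L_m$ lies in $X_j$ for all large $j$ and the $f_j$ are uniformly bounded there; Montel's theorem together with a diagonal argument produces a subsequence converging uniformly on compact subsets of $X$ to a holomorphic $f \colon X \to \ov{\mf{B}^n}$. The plurisubharmonic function $|f|^2$ cannot attain the interior maximum value $1$ without being constant, and $f(p)=0$, so $f(X) \subset \mf{B}^n$ and $f$ is admissible for $C_X(p)$. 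Uniform convergence on compacta forces $f'(p) = \lim_j f_j'(p)$, hence $|\det f'(p)|^2 = \lim_j C_{X_j}(p)$, whence $C_X(p) \geq \lim_j C_{X_j}(p)$ and equality follows. For the Kobayashi--Eisenman coefficient I would instead dilate in the source: given $\epsilon>0$ pick $g \in \mathcal{O}(\mf{B}^n, X)$ with $g(0)=p$ and $|\det g'(0)|^{-2} \leq K_X(p)+\epsilon$, and for $0<r<1$ set $g_r(z) = g(rz)$. Then $g_r(\mf{B}^n) \subset g(\ov{r\,\mf{B}^n})$, a compact subset of $X$, which therefore lies in $X_j$ for all $j \geq j_0(r)$; thus $g_r \in \mathcal{O}(\mf{B}^n, X_j)$ competes for $K_{X_j}(p)$. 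Since $\det g_r'(0) = r^n \det g'(0)$, we get $K_{X_j}(p) \leq r^{-2n} |\det g'(0)|^{-2} \leq r^{-2n}(K_X(p)+\epsilon)$ for $j \geq j_0(r)$; letting $j\to\infty$, then $r\to 1^-$ and $\epsilon \to 0$, yields $\lim_j K_{X_j}(p) \leq K_X(p)$, and equality follows.

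The main obstacle is the Carath\'eodory step, where one must prevent the normal-family limit $f$ from escaping to $\partial \mf{B}^n$ (resp.\ $\partial \De^n$); this is precisely what the maximum principle for $|f|^2$ (resp.\ for each coordinate of $f$), combined with $f(p)=0$, secures. The Kobayashi direction is comparatively painless, the only subtlety being the use of the source dilation $g_r$ to push the image into a compact set that is eventually swallowed by the exhaustion $\{X_j\}$.
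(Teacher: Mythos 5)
Your proposal is correct and follows essentially the same route as the paper: monotonicity of the coefficient sequences, a normal-family limit of (near-)extremal maps with the maximum principle keeping the Carath\'eodory limit map inside $\mf{B}^n$ (resp.\ $\De^n$), and the source dilation $g_r(z)=g(rz)$ to handle the Kobayashi--Eisenman direction. The only cosmetic differences are that the paper uses exact Carath\'eodory extremals (which exist since the targets are bounded) where you use $1/j$-near-extremals, and it presents the $\De^n$-variant first; neither affects the argument.
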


\begin{proof}
We give the proof for the $\De^n$-variant of volume forms. The proof for the $\mf{B}^n$-variant is exactly the same. Fix $p \in X$ and a coordinate chart centred at $p$. The inclusions $X_j \subset X_{j+1} \subset X$ implies that $\{C_{X_j}(p)\}$ is a decreasing sequence with a lower bound $C_{X}(p)$. Therefore, it is enough to show that $C_{X}(p)$ is the infimum of this sequence. Recall that the extremals for the Carath\'{e}odory volume form always exist since the definition involves maps into $ {\Delta}^n$. Let $ f^j: X_j \to \Delta^n $ be the sequence of extremals for $X_j$ at the point $p$ of $X$, i.e.,
\[
\vert \det\left(f^j \right)'(p) \vert^2 = C_{X_j}(p).
\]
The family $ \{f^j\} $ is uniformly bounded and hence there is a subsequence $ \{ f^{j_k} \} $ that 
converges uniformly on compact subsets of $X$ to $f: X \to \overline{\Delta^n}$ with $f(p)=0$. The maximum principle shows that $f(X) \subset \Delta^n $ and hence 
$\vert \det f'(p) \vert^2 \leq C_X(p)$. It follows that for any given $\ep>0$ we have
\[
C_{X_{j_k}} (p) = \vert \det \left( f^{j_k}\right)'(p) \vert^2 \leq \vert f'(p) \vert^2+\ep \leq C_X(p) + \epsilon
\]
for all large $j$ establishing that $C_X(p)=\inf C_{X_j}(p)$ and hence the convergence $\lim_{j \to \infty} C_{X_j}(p) = C_X(p)$ as required.

To work with the Kobayashi-Eisenman volume forms, as before note that $\{K_{X_j}(p)\}$ is a decreasing sequence with a lower bound $K_{X}(p)$, and we show that it is the infimum of this sequence. Let $\ep>0$ and consider a holomorphic mapping  $ f: \Delta^n \to X$ with $ f(0) = p $ that almost realizes $ K_X(p) $, i.e., 
\[
K_X(p) \leq \det \vert f'(0) \vert^{-2} \leq K_X(p) + \epsilon.
\]
Fix $\delta \in (0,1)$ and define the holomorphic mapping $ g : \Delta^n \rightarrow X $ by setting $ g(w) = f (\delta w) $. Since the image $ f (\delta \Delta^n ) $ is compactly contained in $ X $, it follows that $ g : \Delta^n \rightarrow X_j $ for $ j $ large and hence
it follows that
\begin{align*}
K_{X_j}(p) \leq |g'(0)|^{-2} = \delta^{-2n} \vert \det f'(0) \vert^{-2}
\leq \delta^{-2n} \left( K_X(p) + \epsilon \right)
\end{align*}
for $j $ large. Letting $\delta \to 1$ yields
\begin{align} \label{E13}
 \limsup_{j\rightarrow \infty} K_{X_j}(p) \leq K_X(p) + \epsilon,
\end{align}
and hence $K_{X}(p)=\inf K_{X_j}(p)$ as required.
\end{proof}

Now let $ \Omega $ be as in Theorem~\ref{T1} (v) and recall that we are in case (b), i.e., $p^j \to p^0 \in \pa \Om$. It should be noted that each singular boundary point of $\Om$ is a local peak point of $ \Omega $ (see Lemma 5.1 of \cite{Fu-Wong}). Moreover, each smooth boundary point $ p $ is a \textit{local weak peak point} of $ \Omega $, i.e., there exist a neighborhood $ U_p $ of $ p $ and a function $ f_p $ holomorphic on $ \Omega \cap U_p $ and continuous on $ \overline{\Omega} \cap U_p $ such that $ f_p(p) = 1,|f_p (z)| < 1 $ for $ z \in  \Omega \cap U_p $, and $ |f_p (z)| \leq 1 $ for $ z \in \overline{\Omega} \cap U_p $. It follows that $ \Omega $ is complete hyperbolic and hence taut (refer Corollary 3.3 of \cite{Fu} and the following remark therein). We will show that $ M $ is biholomorphic to $ \Delta^2 $. Indeed, $ p^0 \in \partial \Omega $ is either a singular boundary point or a regular boundary point. We consider the $\De^n$-variant of the $c/k$-invariant. Fix a coordinate chart centred at $z^0$. Following the proof of the main theorem of \cite{Fu-Wong}, and making the relevant changes therein, it follows that in both the cases, $ {C_{M_j}(z^0)}/{K_{M}(z^0)} \geq 1 $
for each $ j $ in local coordinates centred at $z^0$. But since $ M_j \subset M_{j+1} $ is an exhaustion of $D $, the above observation together with Lemma \ref{L4} implies that $ {C_{M}(z^0)}/{K_{M}(z^0)} \geq 1 $. Recall that the inequality $ C_M(z^0)/K_M(z^0) (\cdot) \leq 1 $ always holds and therefore, $ {C_{M}(z^0)}/{K_{M}(z^0)} = 1 $. As a consequence, it follows that $ M $ is biholomorphically equivalent to $ \Delta^2$. \qed

We note that the simply connected assumption on $\Om$ is required only when $ p^0 \in \partial \Omega $ is a regular boundary point which can be seen from the proof in \cite{Fu-Wong}. The proof of Theorem \ref{T1} (iv) can be modified to provide an alternate approach for the Union problem in the case $ \Omega $ is strongly pseudoconvex (refer Theorem 1.2 of \cite{Behrens}). More precisely, 
\begin{thm}
Let $ M $ in the Union problem be a hyperbolic manifold of dimension $n$. If $ \Omega $ is a bounded $ C^2$-smooth strongly pseudoconvex domain in $ \mathbb{C}^n $, then $ M $ is biholomorphic either to $ \Omega $ or to $ \mathbb{B}^n $.
\end{thm}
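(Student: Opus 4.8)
The plan is to run the same two-step argument used for Theorem~\ref{T1}~(i)--(iv), replacing the corank-one (respectively convex) scaling by Pinchuk's scaling at a strongly pseudoconvex boundary point; the only genuinely new input is the identification of the limiting domain. As in Section~2, fix $z^0 \in M$, put $p^j=\psi^j(z^0)$, and split into case (a), where $\{p^j\}$ is relatively compact in $\Om$, and case (b), where $p^j \ra p^0 \in \pa\Om$ after passing to a subsequence. In case (a) the tautness of the bounded domain $\Om$ gives at once that $M$ is biholomorphic to $\Om$, so it remains to treat case (b).

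First I would record the scaling. Since $\Om$ is $C^2$-smooth and strongly pseudoconvex, $p^0$ is a strongly pseudoconvex boundary point, and Pinchuk's scaling produces biholomorphisms $A^j:\Om \to \Om_j$ onto domains $\Om_j$ that converge in the local Hausdorff sense to the unbounded realization of the ball,
\begin{equation*}
\Om_{\infty}=\big\{ z \in \mf{C}^n : 2\Re z_n + |z_1|^2 + \cdots + |z_{n-1}|^2 < 0 \big\},
\end{equation*}
which is biholomorphic to $\mathbb{B}^n$ and, in particular, taut; moreover $q^j:=A^j(p^j) \ra q^0$ for some $q^0 \in \Om_{\infty}$. (This is precisely the $m=1$, $P_2(z_1,\ov z_1)=|z_1|^2$ instance of the Levi corank one model of Theorem~\ref{T1}~(i); strong pseudoconvexity furnishes $n-1$ positive Levi eigenvalues, so the transverse directions $z_2,\dots,z_{n-1}$ are all elliptic.) Next I would establish the exact analogue of Proposition~\ref{k-stability} in this setting: the normality of $\ti\psi^j:=A^j\circ\psi^j$, the fact that its limit $\ti\psi$ maps $M$ into $\Om_{\infty}$ because $\ti\psi^j(z^0)=q^j \ra q^0 \in \Om_{\infty}$ together with the tautness of $\Om_{\infty}$, and the Kobayashi-distance stability \eqref{E4} on $\ti\psi(M)$. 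The metric convergence $F_{\Om_j}\ra F_{\Om_{\infty}}$ underlying \eqref{E4} is classical for Pinchuk's scaling of strongly pseudoconvex domains and is exactly the stability exploited by Behrens \cite{Behrens}.

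With these ingredients, Step~II carries over verbatim. Injectivity of $\ti\psi$ follows from the chain \eqref{E1}--\eqref{E3}: passing to the limit and using \eqref{E4} yields $d_M(z^1,z^2) \le d_{\Om_{\infty}}\big(\ti\psi(z^1),\ti\psi(z^2)\big)$, so the hyperbolicity of $M$ forces $z^1=z^2$ whenever $\ti\psi(z^1)=\ti\psi(z^2)$. For surjectivity I would consider the backward sequence $\ti\phi^j:=(\ti\psi^j)^{-1}:\Om_j \to M_j$, regarded inside a taut manifold containing $M\simeq\ti\psi(M)\subset\Om_{\infty}$; it cannot diverge since $\ti\phi^j(q^j)=z^0$ with $q^j\ra q^0$, so a subsequence converges to $\ti\phi:\Om_{\infty}\ra\ov M$. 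Hurwitz's theorem shows the Jacobian of $\ti\phi$ is either nowhere zero or identically zero, and the latter is excluded by producing an open set in $\ti\phi(\Om_{\infty})$ exactly as in Step~II (any $G\subset\subset M_1$ satisfies $G\subset\ti\phi(\Om_{\infty})$). Hence $\ti\phi$ is an open map into $M$ and $\ti\psi\circ\ti\phi=\mathrm{id}_{\Om_{\infty}}$, so $\ti\psi:M\ra\Om_{\infty}$ is a biholomorphism and $M\simeq\mathbb{B}^n$. Together with case (a) this yields the asserted dichotomy.

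The main obstacle is the reduced regularity: the corank-one scaling of Proposition~\ref{k-stability} relies on Cho's polynomial normal form and finite-type machinery available for smooth boundaries, whereas here $\pa\Om$ is only $C^2$. I would therefore not invoke that normal form but instead quote Pinchuk's $C^2$ scaling theory and the associated normality and Kobayashi-metric stability (as in \cite{Behrens}) to supply the analogue of Proposition~\ref{k-stability}; once this stability is in hand, the identification $\Om_{\infty}\simeq\mathbb{B}^n$ is automatic because the scaling model at any strongly pseudoconvex point is a realization of the ball, and Steps~I--II close the argument without further change.
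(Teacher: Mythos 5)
Your proposal is correct, but it follows a genuinely different route from the paper's. What you have reconstructed is essentially Behrens's original scaling proof \cite{Behrens}: Pinchuk scaling at the strongly pseudoconvex accumulation point $p^0$, convergence of the scaled domains to the Siegel-type unbounded realization of $\mbb B^n$, stability of the Kobayashi metric and distance as in Proposition~\ref{k-stability} and \eqref{E4}, and then the Step~II injectivity/surjectivity argument verbatim. This is a valid path --- the $C^2$ regularity issue you flag is real but handled exactly as you say, by quoting Pinchuk's $C^2$ scaling theory rather than Cho's normal form --- and it has the virtue of fitting the unified framework of Theorem~\ref{T1}~(i)--(iv) and identifying the limit domain explicitly. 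The paper, however, deliberately avoids scaling here and instead gives what it calls an alternate approach via the $\mbb B^n$-variant of the $c/k$-invariant: by biholomorphic invariance, $C_{M_j}(z^0)/K_{M_j}(z^0)=C_{\Omega}(p^j)/K_{\Omega}(p^j)$; the volume-form stability under exhaustions (Lemma~\ref{L4}) gives convergence of the left side to $C_M(z^0)/K_M(z^0)$; Rosay's theorem \cite{Rosay} forces the right side to tend to $1$ as $p^j \to p^0 \in \pa\Omega$ strongly pseudoconvex; and the Graham--Wu characterization \cite{Gr-Wu} then yields $M \simeq \mbb B^n$ from equality at the single point $z^0$. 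The paper's route is shorter and needs no verification of metric stability under scaling or any normality/Hurwitz analysis of backward maps --- the price is that it is special to the ball (it only works because the quotient invariant characterizes $\mbb B^n$), whereas your scaling argument is the template that generalizes to the Levi corank one and convex settings of Theorem~\ref{T1}.
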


\begin{proof}
For the duration of this proof we will be working with the $\mf{B}^n$-variant of the $c/k$-invariant. Let $ z^0 \in M $ be a fixed point, $ \psi^j : M_j \rightarrow \Omega $ be biholomorphisms from $ M_j $ onto $ \Omega $, and $p^j=\psi^j(z_0)$. Much like before, the following two possibilities need to be considered:
\begin{enumerate}
 \item [(a)] $\{p^j\}$ is compact in $\Omega$, and  
\item [(b)] $\{p^j\} $ has at least one limit point $ p^0 \in \partial \Omega$. 
\end{enumerate}
Notice that $ M $ turns out to be biholomorphically equivalent to $ \Omega $ in case (a) as earlier. In case (b), we show that $ M $ is biholomorphic to $ \mathbb{B}^n $. Indeed, fixing a coordinate chart in $D_1$ centred at $z^0$, it is immediate that
\begin{equation} \label{E14}
\frac{C_{M_j}(z^0)}{K_{M_j}(z^0)}= \frac{C_{\Omega}\left(p^j\right) }{K_{\Omega}\left(p^j\right) }. 
\end{equation}
But since $ M_j \subset M_{j+1} $ is an exhaustion of $M$ and $ M $ is hyperbolic, Lemma~\ref{L4} imply that
\begin{equation} \label{E15}
\frac{C_{M_j}(z^0)}{K_{M_j}(z^0)} \to \frac{C_M(z^0)}{K_M(z^0)}.
\end{equation}
On the other hand, it follows from \cite{Rosay} that 
\begin{equation} \label{E16}
 \frac{C_{\Omega}\left(p^j\right) }{K_{\Omega}\left(p^j\right) } \rightarrow 1.
\end{equation}
Combining (\ref{E13}), (\ref{E14}) and (\ref{E15}), and letting $ j \rightarrow \infty $ gives $ C_M(z^0)= K_M(z^0) $. It follows that $ M $ is biholomorphic to $ \mathbb{B}^n $, and the proof is complete.
\end{proof}

\subsection{Proof of Theorem~\ref{T1} (vi): $ \Omega $ is the symmetrized polydisc $ \mathbb{G}_n $} 
The symmetrized polydisc is the image $ \mathbb{G}_n = \pi (\Delta^n) $ of $\De^n$ under the symmetrization map $ \pi: \mathbb{C}^n \rightarrow \mathbb{C}^n $ defined by 
\begin{equation}\label{strpoly}
\pi (z_1, \ldots, z_n) = \left( \sum_{1 \leq j_1< \cdots < j_k \leq n } z_{j_1} \ldots z_{j_k}\right), \quad 1 \leq k \leq n.
\end{equation}
In particular, $ \mathbb{G}_1= \Delta $ and $ \mathbb{G}_2 $ is called the symmetrized bidisc. It is known that $ \mathbb{G}_n $ is taut, $ \left( \pi \right)^{-1} ( \mathbb{G}_n) = \Delta^n $, and $ \pi \vert_{\Delta^n} : \Delta^n \rightarrow \mathbb{G}_n $ is proper with multiplicity $ n! $.

Recall that we are in case (b), i.e., $p^j \to p^0 \in \pa \Om$. We prove that there exists a proper correspondence from $ M$ to $ \Delta^n $. To this end, consider the symmetrization mapping $ \pi: \Delta^n \rightarrow \Omega $ as defined above and the $ n! $-valued holomorphic mapping $ \pi^{-1} : \Omega \rightarrow \Delta^n $. Set $ \lambda^{j,0} \in \Delta^n $ to be any one of the $ n! $  preimages of $ \psi^j(z^0) $ under $ \pi $. Define the multiple valued mappings
\begin{alignat*}{3}
 H^j \circ \pi^{-1} \circ \psi^j : M_j \rightarrow \Delta^n
\end{alignat*}
where $ H^j $ are automorphisms of $ \Delta^n $ so chosen that $ H^j $ map $ \lambda^{j,0} $ to the origin. Note that the inverses
\[ 
\left( \psi^j \right)^{-1} \circ \pi \circ \left(H^j\right)^{-1} : \Delta^n \rightarrow M_j
\] 
are proper holomorphic mappings. At this point, recall from \cite{PK}, the notion of normality for correspondences and Theorem 3 therein, the version of Montel’s theorem for proper holomorphic correspondences
with varying domains. It is immediate from the construction that the set $ H^j \circ \pi^{-1} \circ \psi^j (z^0)  $ contains the origin. In particular, it is evident that $ \{ H^j \circ \pi^{-1} \circ \psi^j \} $ fails to be compactly divergent.  In this setting, Theorem 3 of \cite{PK} implies that the family $ \{ H^j \circ \pi^{-1} \circ \psi^j \} $ of proper correspondences is normal and hence some subsequence converges to a proper correspondence from $ M $ to $ \Delta^n $ with each fibre having cardinality at most $ n! $. \qed.

\section{When $ M $ is non-hyperbolic}
The purpose of this section is to prove Theorem \ref{T2}. Firstly, recall the biholomorphisms $ \psi^j: M_j \rightarrow \Omega $ and $ \phi^j := \left(\psi^j \right)^{-1} : \Omega \rightarrow M_j $. As in Section $ 2 $, fixing $ z^0 \in D $, and writing $p^j=\psi^j(z^0)$, one needs to consider the following two cases:
\begin{enumerate}
 \item[(a)] $ \{ p^j \} $ is compactly contained in $ \Omega $, and
 \item[(b)] $ \{ p^j \} $ has at least one limit point $ p^0 \in \partial \Omega $.
 \end{enumerate}
Consider case (a). The first step is to construct a retract $Z$ of $\Om$. The tautness of $ \Omega $ forces that $ \{ \psi^j \} $, after passing to a subsequence, converges uniformly on compact subsets of $ M $ to a holomorphic mapping $ \psi: M \rightarrow {\Omega} $. Following \cite{FS1981} and \cite{Behrens}, consider the holomorphic mappings 
\begin{equation*}
 \alpha^j := \psi \circ \phi^j : \Omega \rightarrow \Omega.
\end{equation*}
Observe that
\[
\al^j\big(\psi^j(z^0)\big)=\psi \circ \phi^j \big(\psi(z^0)\big)=\psi(z^0).
\]
Again, exploiting the tautness of $ \Omega $, it is possible to pass to a subsequence of $ \{\alpha^j \} $ that converges uniformly on compact subsets of $\Om$ to a holomorphic mapping $ \alpha: \Omega \rightarrow \Omega$. Moreover,
\begin{equation*}
 \alpha \circ \psi (z) = \lim_{j \rightarrow \infty} \alpha^j \circ \psi^j (z) = \lim_{j \rightarrow \infty} \psi \circ \phi^j \circ \psi^j (z) = \psi(z)
\end{equation*} 
for all $ z \in M$. Following \cite{FS1981} and \cite{Behrens}, define
\begin{equation*}
Z = \{ w \in \Omega : \alpha(w) = w \} \supset \psi(M).
\end{equation*}
Note further that
\[
\al\big(\al^j(w)\big)= \al \circ \psi \circ \phi^j (w) = \psi \circ \phi^j (w) = \al^j(w)
\]
and hence $\al^j$ maps $Z$ to $Z$. Now let $ k $ be the maximal rank of $ {\psi} $ on $ M $. Recall that $ {\alpha} = \lim_{j \rightarrow \infty}  {\psi} \circ {\phi}^j $, and hence rank ${\alpha} \leq  k $. By verbatim arguments from Lemmas 4.1-4.4 of \cite{FS1981} one can check that $Z = \psi (M) $, $Z$ is a closed connected submanifold of $\Om$, the mapping $ \alpha $ is a holomorphic retraction from $ \Omega $ onto $ Z $, and the mapping $\psi$ has constant rank $k$. Also, as $ \psi^j$ are biholomorphisms and hence Kobayashi isometries,
\begin{equation} \label{E8}
 F_{M_j} (p, v) = F_{\Omega} \big( \psi^j(p), d\psi^j(p) v \big), \quad (p,v) \in T^{1,0}M.
\end{equation}
The tautness of $ \Omega $ implies via a normal family argument that $ F_{\Omega}( \cdot, \cdot) $ is jointly continuous and hence letting $ j \rightarrow \infty $ in (\ref{E8}) yields
\begin{equation} \label{E10}
 F_M(p,v) = F_{\Omega} \big( \psi(p), d \psi(p) v \big), \quad (p,v) \in T^{1,0}M.
\end{equation}
Since $\Om$ is hyperbolic,
\begin{equation*}
F_M(p,v) = 0 \; \mbox{ iff } \; d \psi(p) v = 0,
\end{equation*}
and hence the dimension of $\{v \in T_pM: F_M(p,v)=0\}$ is equal to the nullity of $d\psi(p)$ which is the constant $n-k$ as we have noted above. This proves part (i) of the theorem in case (a). It is worthwhile mentioning that the zero set of the infinitesimal Kobayashi metric on $ M $, i.e., the set 
\begin{equation*} 
 \big\{ (p,v) \in T^{1,0}M : F_D(p,v) = 0 \big\},
\end{equation*}
turns out to be a vector bundle over $ M $ of dimension $ (n-k) $. The proofs of parts (ii) and (iii) of Theorem \ref{T2} in case (a) follow exactly as in \cite{FS1981} and are therefore omitted here. 

\medskip

Now consider case (b). We scale $ \Omega $ along $p^j=\psi^j(z^0)$ and recall that $A^j: \Om \to \Om_j$ are the scaling maps, $\tilde{\psi^j} = A^j \circ \psi^j : M_j \rightarrow \Omega_j$ is a normal family, $\ti \psi^j (z^0)=q^j \to q^0 \in \Om$, and $ \tilde{\psi} : M \rightarrow \Omega_{\infty} $ is a limit of $\ti \psi^j$. Moreover, $\ti \phi^j=(\psi^j)^{-1}: \Om_j \to M_j$. Following \cite{FS1981} and \cite{Behrens}, consider the holomorphic mappings 
\begin{equation*}
\tilde{\alpha}^j := \tilde{\psi} \circ \tilde{\phi}^j : \Omega_j \rightarrow \Omega_{\infty}.
\end{equation*}
Observe that
\[
\ti \al^j (q^j)=\tilde{\psi} \circ \tilde{\phi}^j \big(\psi^j(z^0)\big) = \ti \psi (z^0)=q^0.
\]
Again, exploiting the tautness of $ \Omega_{\infty} $, it is possible to pass to a subsequence of $ \{ \tilde{\alpha}^j \} $ that converges uniformly on compact sets of $ \Omega_{\infty} $ to a holomorphic mapping $ \tilde{ \alpha} :
\Omega_{\infty} \rightarrow{\Omega}_{\infty} $. Also,
\begin{equation*}
\tilde{\alpha} \circ \tilde{\psi} (z) = \lim_{j\rightarrow \infty} \tilde{\alpha}^j \circ \tilde{\psi}^j(z) =  \lim_{j\rightarrow \infty} \tilde{\psi} \circ \tilde{\phi}^j \circ \tilde{\psi}^j(z) = \tilde{\psi}(z),
\end{equation*}
for all $ z \in M $. Following \cite{FS1981} and \cite{Behrens}, define
\begin{equation*}
\tilde{Z} = \{ w \in \Omega_{\infty} : \tilde{\alpha}(w) = w \} \supset \tilde{\psi}(D). 
\end{equation*}
Note further that
\[
\ti \al\big(\ti \al^j(w)\big)= \ti \al \circ \ti \psi \circ \ti \phi^j (w) = \ti \psi \circ \ti \phi^j (w) = \ti \al^j(w)
\]
and hence $\ti \al^j$ maps $Z$ to $Z$.  Let $ \ti k $ be the maximal rank of $ \tilde{\psi} $ on $ M $. Recall that $ \tilde{\alpha} = \lim_{j \rightarrow \infty}  
\tilde{\psi} \circ \tilde{\phi}^j $, and hence rank $ \tilde{\alpha} \leq \ti k $. By verbatim arguments from Lemmas 4.1-4.4 of \cite{FS1981} one can check that $\ti Z = \ti \psi (M) $, $\ti Z$ is a closed connected submanifold of $\Om_{\infty}$, the mapping $\ti \alpha $ is a holomorphic retractions from $ \Omega_{\infty} $ onto $ \ti Z $, and the mapping $\ti \psi$ has constant rank $\ti k$. Also, as $\tilde{\psi}^j $ are biholomorphisms and hence Kobayashi isometries,
\begin{equation} \label{E9}
F_{M_j} (p, v) = F_{\Omega_j} \big( \tilde{\psi}^j(p), d\tilde{\psi}^j(p) v \big), \quad (p,v) \in T^{1,0}M.
\end{equation}
Further, appealing to (\ref{E5}) and letting $ j \rightarrow \infty $ in (\ref{E9}) gives
\begin{equation} \label{E11}
 F_M(p,v) = F_{\Omega_{\infty}} \big( \tilde{\psi}(p), d \tilde{\psi}(p) v \big).
\end{equation}
Again, as $\Om_{\infty}$ is hyperbolic, we obtain
\begin{equation*}
F_M(p,v) = 0 \; \mbox{ iff } \; d \tilde{\psi}(p) v = 0.
\end{equation*}
Therefore, the dimension of $\{v\in T_pM :F_M(p,v)=0\}$ is equal to the nullity of $d\ti \psi(p)$ which is the constant $n -\ti k$. We further metion that the zero set of the infinitesimal Kobayashi metric on $ M$, i.e., the set 
\begin{equation*} 
 \{ (p,v) \in T^{1,0} M : F_M(p,v) = 0 \},
\end{equation*}
in this case is a vector bundle over $ M $ of dimension $ (n-\ti k) $. The proofs of parts (ii) and (iii) of Theorem \ref{T2} follow exactly as in \cite{FS1981} and are therefore omitted here.

\section{Retracts of $\Delta \times \mathbb{B}^{n-1}$ -- Proof of Theorem \ref{retracts}}
For notational convenience, we work with $\Delta \times \mathbb{B}^{n}$ instead of $\Delta \times \mathbb{B}^{n-1}$. Before proceeding ahead, recall that a holomorphic retract of a complex manifold $X$ is a complex submanifold of $X$ (see Chapter 2 of \cite{Ce}). We shall refer $X$ and its points as the trivial retracts. Let $\alpha=(\alpha_0,\alpha_1, \ldots, \alpha_n)$ be a retraction of $\Delta \times \mathbb{B}^n$ and $Z$ be the range of $ \alpha $. Note that $d\alpha(p)v= v $ for all $p \in Z$ and $v \in T_pZ$. Moreover, assume that $Z$ contains the origin, i.e., $\alpha(0)=0$, by composing with an automorphism of $ \Delta \times \mathbb{B}^n$, if required.


Further, note that $Z$ is a non-compact subset of $\Delta \times \mathbb{B}^n$ and hence intersects the boundary 
\[
\partial (\Delta \times \mathbb{B}^n)
= (\partial \Delta \times \mathbb{B}^n) \cup (\Delta \times \partial \mathbb{B}^n)
\cup (\partial \Delta \times \partial \mathbb{B}^n).
\]
%
The main idea is to study how $ L = T_0 Z $ intersects $ \partial (\Delta \times \mathbb{B}^n)$.
Writing $ G= \Delta \times \mathbb{B}^n $, $ L' = L \cap G$, and $\partial L'= L \cap \partial G$, the following four cases arise:
\begin{itemize}
\item [(i)] $\pa L' \subset \pa \De \times \pa \mathbb{B}^n$.
\item [(ii)] $\pa L' \subset \partial \Delta \times \overline{\mathbb{B}^n}$.
\item [(iii)] $\pa L' \subset \overline{\Delta} \times \partial \mathbb{B}^n$.
\item [(iv)] $\pa L'$ intersects both $\pa \De \times \mathbb{B}^n$
 and $\De \times \pa \mathbb{B}^n$.
\end{itemize}

Here, and in the sequel, a {\it linear subspace of} $D$ refers to the intercept $l \cap D$ where $l$ is any complex linear subspace of $\mathbb{C}^{n+1}$. Further, the  closed set $l \cap \partial D$ denotes the boundary of such a linear subspace.

\subsection*{Case (i)} Here, both the projections $\pi_1: L' \to \Delta$ and
$\pi_2: L' \to \mathbb{B}^n$ are proper. 
Since proper holomorphic maps do not decrease the dimension, it follows from the properness of $ \pi_1 $ that ${\rm dim}(L) = {\rm dim}(L')=1$, and consequently that, $ Z $ is a one-dimensional complex submanifold of $ \Delta \times \mathbb{B}^n $. Since $L'$ is linear, the properness of $\pi_1$ also means that the fibres of $\pi_1$ are singletons, and hence $\pi_1: L' \rightarrow \Delta$ is a biholomorphism. Now, use the inverse of this map to parametrize $L'$ as 
\[
L' = \Big\{ \big( w, \beta_1(w), \ldots, \beta_n(w)\big) : w \in \Delta \Big\},
\]
where $\beta_1, \ldots, \beta_n$ are linear functions of a single complex
 variable. Recall that $ L' $ contains the origin, and hence $\beta_j(w) = c_jw$ for $c_j \in \mathbb{C}$. By the hypothesis of case (i), it follows that $(c_1, \ldots, c_n) \in \partial \mathbb{B}^n$. 
 
So far, we have a description of $ L'$. The claim is that $L'=Z$. Indeed, pick  $(w_0,z^0) = (w_0,z^0_1,\ldots, z^0_n ) \in \partial L' \subset \partial \Delta \times \partial \mathbb{B}^n$ and 
consider the complex line through this point and the origin. Since $ \mbox{dim}(L') = 1$, this complex line must be $L'$ itself. Thus,
\[
\varphi(t) = (w_0t, z^0_1t, \ldots, z^0_nt)
\]
is a parametrization of $L'$. Restrict the retraction $\alpha$ to $L'$ and consider
\[
g_1(t) = (\pi_1 \circ \alpha \circ \varphi) (t) = \alpha_0(w_0t, z^0_1t, \ldots, z^0_nt)
\]
and 
\[
g_2(t) = (\pi_2 \circ \alpha \circ \varphi)(t) = \left( \alpha_1(w_0t,z^0t), \ldots, \alpha_n(w_0t,z^0t) \right).
\]
Note that $g_1: \Delta \rightarrow \Delta $ and $g_2 : \Delta \rightarrow \mathbb{B}^n $. Further, observe that  $g_1(0)=0$ and  
\[
dg_1(0)= d\pi_1 \vert_{0}\circ d \alpha\vert_{0} \circ d \varphi(0) = w_0 \in \partial \Delta
\]
since $ d \varphi(0) = (w_0,z^0) \in T_0Z$, $d\alpha(0)v= v $ for all $v \in T_0Z$. Applying the Schwarz Lemma to $ g_1$ yields that $\alpha_0(w_0t,z^0t)= w_0t$, and hence $ {\alpha_0}(w,z) = w $ on $ L' $. Similarily,  $g_2(0)=0$ and $dg_2(0) = z^0 \in \partial \mathbb{B}^n$. By the Schwarz Lemma (Theorem 8.1.3 of \cite{R}), it follows that $g_2(t) = z^0 t$ or equivalently that
\[
\alpha_j \left(w_0t, z^0_1t, \ldots, z^0_nt \right) = z^0_j t,
\]
for each $ j = 1, \ldots, n $. Hence, $\alpha$ is the identity mapping on $L'$. In particular, it follows that $L' \subset Z$. Apply the identity principle (see Proposition 1, Section 5.6 of \cite{C} for details) to 
conclude that $ L' = Z $. Thus,
\[
Z=\left\{ (w,c_1w,c_2w, \ldots, c_n w) : w \in \Delta\right \}
\]
where $(c_1,\ldots,c_n) \in \partial \mathbb{B}^n $. 
 
\subsection*{Case (ii)} In this case, the projection $\pi_1: L' \to \Delta$ is proper. Using similar arguments as before, it follows that ${\rm dim} (Z) = {\rm dim} (L)={\rm dim} (L')=1$ and 
\[
L' = \left \{ (w, c_1w,c_2w, \ldots, c_n w) : w \in \Delta \right\},
\]
for $(c_1, \ldots, c_n) \in \partial \mathbb{B}^n$.
Pick a point $(w_0,z^0) \in \partial L' \subset \partial \Delta \times \overline{\mathbb{B}^n}$, and consider the restriction of $\alpha$ to the complex line segment joining $(w_0,z^0) $ and the origin. Applying the Schwarz lemma, as in Case (i), shows that $\alpha_0(w,z) = w$ on $ L' $. This implies that $\alpha(L')$ is the graph of the $ \mathbb{B}^n$-valued holomorphic mapping 
\[
 \Delta \ni w \longmapsto \big( \alpha_1(w, c_1w,\ldots, c_n w), \ldots, \alpha_n(w, c_1w,c_2w, \ldots, c_n w)\big).
\]
In particular, $\alpha(L')$ is a one-dimensional subvariety of $Z$. In this setting, the identity principle for the complex analytic sets ensures that $\alpha(L')=Z$, i.e., 
\[
Z=\Big\{ \big( w, \alpha_1(w, c_1w,\ldots, c_n w), \ldots, \alpha_n(w, c_1w,c_2w, \ldots, c_n w) \big) : w \in \Delta \Big\}.
\]

\subsection*{Case (iii)} In this case, the projection $\pi_2 : L' \to \mathbb{B}^n $ is proper. Since proper holomorphic maps do not decrease the dimension, it follows that ${\rm dim}( L') $ is at most $n$. 

\medskip

Suppose that ${\dim(L')} =1$. Let $(w_0, z^0) $ be a point of $ \partial L'$ chosen so that $ (w_0, z^0) \in \Delta \times \partial \mathbb{B}^n $ (if $ w_0 \in \partial \Delta $, then we are in Case (i)). As before, since ${\dim(L')} =1$, the complex line through this point $(w_0, z^0) $ and the origin is $ L' $ and \[
\varphi(t) = (w_0t, z^0_1t, \ldots, z^0_nt)
\]
gives a parametrization of $L'$. Moreover, $g_2: \Delta \rightarrow \mathbb{B}^n $ defined by setting $ g_2(t) =   (\pi_2 \circ \alpha \circ \varphi)(t)$ satisfies $g_2(0)=0$ and $dg_2(0)=z^0$. Apply Theorem 8.1.3 of \cite{R}, as before, to get that 
 $g_2(t)=z^0t $, i.e.,  
 \[
 \Big( \alpha_1 \big( \varphi(t) \big), \ldots, \alpha_n \big(\varphi(t) \big) \Big) = (z^0_1 t, \ldots,  z^0_nt),
\]
which exactly means that
\begin{equation} \label{alphonL}
\big( \alpha_1(w,z_1,\ldots,z_n), \ldots, \alpha_1(w,z_1,\ldots,z_n) \big) = (z_1, \ldots, z_n) \mbox{ on } L'.
\end{equation}
Furthermore, since $L'$ is linear and $\pi_1$ is proper, each fibre of $ \pi_1 $ is a singleton. Hence, $\pi_2$ maps $L'$ biholomorphically onto $\pi_2(L') $, $\pi_2(L') $ is a one-dimensional linear subspace of $\mathbb{B}^n $
and
\[
 L'=\Big\{ \big( \beta_0( z_1, \ldots, z_n), z_1, \ldots, z_n \big) : (z_1, \ldots, z_n ) \in \pi_2(L')\Big\},
\]
where $ \beta_0 $ is holomorphic in $ z_1, \ldots, z_n $. 
It follows that $ \alpha(L') $ is the graph of a $ \Delta $-valued holomorphic function over $ \pi_2(L') $, i.e.,
\[
 \alpha(L') = \Big\{ \Big( \alpha_0\big(\beta_0(z_1,\ldots, z_n) \big),z_1, \ldots, z_n \Big): (z_1, \ldots, z_n ) \in \pi_2(L') \Big\}.
 \]
In particular, $\alpha(L')$ is a one-dimensional analytic variety of $ Z$ and hence $\alpha(L')=Z$ by the identity principle. 

\medskip

Next, suppose that ${\rm dim}(L') = k >1$. Apply the above analysis to each one-dimensional complex linear subspace $l'$ of $L'$ with $ \partial l' \subset \overline{\Delta} \times \partial \mathbb{B}^n$ to 
conclude that 
\begin{equation*} 
\big( \alpha_1(w,z_1,\ldots,z_n), \ldots, \alpha_1(w,z_1,\ldots,z_n) \big) = (z_1, \ldots, z_n) \mbox{ on } l'.
\end{equation*}
It follows that 
\begin{equation*} 
\big( \alpha_1(w,z_1,\ldots,z_n), \ldots, \alpha_1(w,z_1,\ldots,z_n) \big) = (z_1, \ldots, z_n) \mbox{ on } L'.
\end{equation*}
On the other hand, since $\pi_2: L' \to \mathbb{B}$ is proper and $L'$ is linear, it follows that
$\pi_2 $ maps $L'$ biholomorphically onto $ \pi_2(L')$, 
$\pi_2(L') $ is a $k$-dimensional linear subspace of $\mathbb{B}^n $
and hence
\[
 L'=\Big\{ \big( \beta( z_1, \ldots, z_n), z_1, \ldots, z_n \big) : (z_1, \ldots, z_n ) \in \pi_2(L')\Big\},
\]
for some holomorphic function $ \beta $.
As a consequence, 
\[
 \alpha(L') = \left\{ \Big( \alpha_0\big(\beta(z_1,\ldots, z_n) \big),z_1, \ldots, z_n \Big): (z_1, \ldots, z_n ) \in \pi_2(L') \right\},
 \]
or equivalently that, $ \alpha(L') $ is the graph of a $ \Delta $-valued holomorphic function over $ \pi_2(L') $. 
It is immediate that $\alpha(L')$ is a $k$-dimensional analytic variety of $ Z$. Since $ {\rm dim}(Z) = {\rm dim}(L') = k $, it follows that $\alpha(L')=Z$ as before.

\subsection*{Case (iv)} Here, $\partial L'$ intersects 
both $\partial \Delta \times \mathbb{B}^n$ and 
$\Delta \times \partial \mathbb{B}^n$.

\medskip

Let $ l $ denote the complex line joining the origin and a point $ (w_0, z^0) \in \partial L' \cap \left(\partial \Delta \times \mathbb{B}^n \right) $.
Then the line $ l $ does not intersect $ \Delta \times \partial \mathbb{B}^n $. It follows that $ l $ is properly contained in $ L'$ and hence ${\rm dim}(L') \geq 2$. 

\medskip

Note that open pieces of $ \partial L' $ are contained in $\partial \Delta \times \mathbb{B}^n$ and 
$\Delta \times \partial \mathbb{B}^n$. Indeed, if $\tilde{l}$ is a complex line in $L'$ that intersects $\partial \Delta \times \mathbb{B}^n$ (or $\Delta \times \partial \mathbb{B}^n$), then the lines obtained by small perturbations of $ \tilde{l} $ intersect the open piece $\partial \Delta \times \mathbb{B}^n$ (or  
$\Delta \times \partial \mathbb{B}^n$ respectively) of $ \partial G$. Further, the boundary of the intercept of each such line with $G$ is entirely contained in $\partial \Delta \times \mathbb{B}^n$ (or $\Delta \times \partial \mathbb{B}^n$). Then the union of these lines contains an open subset $U_L$ of $L'$.

\medskip
 
Next, applying the Schwarz lemma arguments as in Case (ii) and Case (iii) 
yields that 
 $ {\alpha_0}(w,z)=w $
and
 $\left( \alpha_1(w,z), \ldots, \alpha_1(w,z) \right) = z $
on $ L'$. Hence, $\alpha$ is the identity mapping on $L'$. As a consequence, $ L' \subset Z $. But ${\rm dim}(L')={\rm dim}(Z)$, so that $L'=Z$ as before. This completes the proof of Theorem~\ref{retracts}. 



\medskip

Combining the above observations with Theorem 3.2 and the Main Theorem of \cite{FS1981}, we get the following result:

\begin{thm}
Let $ \Omega= \Delta \times \mathbb{B}^{n-1}$ in the union problem. If $M$ is hyperbolic, then 
$M $ is biholomorphic to $ \Delta \times \mathbb{B}^{n-1}$. If $ M $ is non-hyperbolic and the corank of $F_M$ is one, then $M$ is biholomorphic to $Z \times \mathbb{C}$
 where $Z$ is a retract of $\Delta \times \mathbb{B}^{n-1}$ as given by  Theorem~\ref{retracts}. Moreover,  $Z$ is either the graph of a $\Delta$-valued holomorphic function on $\mathbb{B}^{n-1}$ or a linear subspace of $\Omega$ of (complex) dimension at least two.
\end{thm}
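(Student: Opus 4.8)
The plan is to separate the two regimes and, in each, to reduce everything to Forn\ae ss--Sibony by exploiting that $\Delta \times \mathbb{B}^{n-1}$ is bounded and homogeneous. First I would record that $\Aut(\Delta \times \mathbb{B}^{n-1}) \cong \Aut(\Delta) \times \Aut(\mathbb{B}^{n-1})$ acts transitively, so the quotient $\Omega / \Aut(\Omega)$ is a single point and in particular compact; together with boundedness this puts the full hypotheses of \cite{FS1981} in force directly, with no recourse to the scaling apparatus of Section~3.

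In the hyperbolic case the argument is then immediate: in the present setting hyperbolicity of $M$ amounts to the corank of $F_M$ being zero, equivalently $F_M(p,v)\neq 0$ for every $p$ and every $v\neq 0$, so the Main Theorem of \cite{FS1981} applies and gives $M \simeq \Omega = \Delta \times \mathbb{B}^{n-1}$.

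For the non-hyperbolic corank-one case I would first invoke Theorem~3.2 of \cite{FS1981}: corank one forces $M$ to be biholomorphic to a locally trivial holomorphic fibre bundle with fibre $\mathbb{C}$ over a holomorphic retract $Z \subset \Omega$. Composing with an automorphism I may take $0 \in Z$, so Theorem~\ref{retracts} applies and lists the four possible shapes of $Z$. The decisive input is a dimension count: a one-dimensional fibre forces $\dim_{\mathbb{C}} Z = \dim_{\mathbb{C}} M - 1 = n-1$. Running through the list, types (i) and (ii) are one-dimensional and so can occur only when $n=2$, where $\mathbb{B}^{n-1} = \Delta$ and, after the coordinate-swap automorphism of $\Delta^2$, they are subsumed by the graph description; type (iii) reaches dimension $n-1$ precisely when its linear subspace is all of $\mathbb{C}^{n-1}$, making $Z$ the graph of a $\Delta$-valued holomorphic function on $\mathbb{B}^{n-1}$; and type (iv) is a linear slice of dimension $n-1\geq 2$. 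This yields the ``Moreover'' clause.

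It remains to trivialize the bundle, and this is where the main work lies. In the surviving cases $Z$ is biholomorphic to a bounded convex domain: for type (iii) the projection to the $\mathbb{B}^{n-1}$-factor is a biholomorphism of $Z$ onto $\mathbb{B}^{n-1}$, while for type (iv) the set $Z = l \cap (\Delta \times \mathbb{B}^{n-1})$ is the intersection of a linear subspace with a convex set, hence convex. In particular $Z$ is contractible and Stein, so $H^q(Z,\mathcal{O})=0$ for $q\geq 1$ and $H^2(Z,\mathbb{Z})=0$; via the exponential sequence these give $H^1(Z,\mathcal{O})=H^1(Z,\mathcal{O}^*)=0$, i.e. both the additive and the multiplicative Cousin problems are solvable on $Z$. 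The transition cocycle of the $\mathbb{C}$-bundle takes values in the affine group $w \mapsto aw+b$, and the two Cousin solutions trivialize its multiplicative and additive parts respectively; this is exactly the mechanism in the proof of Corollary~4.8 of \cite{FS1981}, and it delivers $M \simeq Z \times \mathbb{C}$. The hard part is precisely this last step: one must check that the reduction of the structure group to the affine group of \cite{FS1981} is available here, and that it is contractibility, not merely Stein-ness, that kills the multiplicative obstruction through the exponential sequence; the dimension bookkeeping and the appeal to Theorem~\ref{retracts} are by comparison routine.
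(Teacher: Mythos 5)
Your proposal is correct and takes essentially the same route as the paper, whose entire proof is the combination you describe: homogeneity of $\Delta \times \mathbb{B}^{n-1}$ puts the hypotheses of \cite{FS1981} in force (Theorem~3.2 there gives the hyperbolic case, the Main Theorem the corank-one fibration over a retract $Z$ with $\dim Z = n-1$), Theorem~\ref{retracts} plus your dimension count yields the ``Moreover'' clause, and the Cousin-problem trivialization via the argument of Corollary~4.8 of \cite{FS1981} is exactly what the paper invokes (implicitly here, explicitly in its introductory remarks). One small simplification: the reduction of the structure group to the affine group, which you flag as needing verification, is automatic, since $\Aut(\mathbb{C})$ \emph{is} the affine group, so the transition maps of any locally trivial holomorphic fibre bundle with fibre $\mathbb{C}$ are of the form $w \mapsto a(z)w + b(z)$ with $a, b$ holomorphic and $a$ nonvanishing.
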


\section{Can $M$ be quasiprojective?--Proof of Theorem~\ref{qp}} 
Let $M=\mf{P}^n \setminus Z$ where $Z$ is a divisor in $\mf{P}^n$. Firstly, we show that $M$ cannot be hyperbolic. Suppose not; then Theorem 3.2 of \cite{FS1981} provides a biholomorphism $\phi: M\to\Om$. Then, each of the components of $\varphi$ are bounded and by the Riemann removable singularities theorem, applied to these component functions says that $Z$ is removable for all these component functions and yields their holomorphic extendibility to $\mathbb{P}^n$ and thereby for $\varphi$ as well. However, as
$\mathbb{P}^n$ is a compact complex manifold, it follows by Liouville's theorem that $\varphi$ is constant which is impossible, as $\varphi$ is a biholomorphic map on $M$. Therefore, $M$ cannot be a hyperbolic.

\medskip

Next, let $\Om=\mf{B}^n$ or $\De^n$ and as just noted, $M$ is not hyperbolic. If possible, assume that the corank of $F_M$ is one. In this case, we first recall Corollary 4.8 of \cite{FS1981} according to which the domain is biholomorphic to $\Om'\times \mf{C}$ where $\Om'=\mathbb{B}^{n-1}$ or $\De^{n-1}$ respectively. Let 
$\pi: \Om'  \times \mf{C} \to\Om'$ be the natural projection. Now, consider the map
$\tilde{\varphi} = \pi \circ \varphi$ which is a mapping firstly from $M$ to $\Om'$; but then as before,  
note that the Riemann removable singularities theorem, when applied to the components of $\tilde{\varphi}$ yields their holomorphic extendibility to $\mathbb{P}^n$, and thereby of $\tilde{\varphi}$ as well. Now $\mathbb{P}^n$
 being a compact complex manifold forces 
$\tilde{\varphi}$ to be constant. What this means for our map $\varphi$ is that it maps all of $M$ into just one of the fibres of $\pi$ which are all one-dimensional; but this is impossible as $\varphi$ is a biholomorphic mapping on 
$M$ (whose dimension is $n \geq 2$; the case $n=1$ is no exception, the arguments are only even more easier). We conclude therefore that if at all  a quasiprojective variety $M$ can be exhausted by biholomorphic images of $\mf{B}^n$ or $\De^n$, then the corank of its Kobayashi metric must be strictly bigger than one.

\section{Can $M$ be a co-pluripolar set?--Proof of Theorem~\ref{cp}}
If possible, assume that $M$ is hyperbolic. Then, Theorem 3.2 of \cite{FS1981} applies to give a biholomorphism $\psi:M \to \mathbb{B}^{n}$. As $\psi$ is bounded and $P$ is pluripolar, $\psi$ extends to a holomorphic map on all of $M$. But then $\psi(P) \subset \pa \mf{B}$ and the maximum principle forces $\psi$ to be a constant which is a contradiction. We do not know whether it is possible for the corank to be one in this case. This would amount to showing that $M = X\setminus P$ cannot be biholomorphic to $\Delta \times \mbb C$.

\section{Some concluding remarks}
It is possible to formulate versions of Theorems~\ref{T1} and \ref{T2} with only local assumptions on the model domain $\Om$; the proofs, being similar, are omitted here.

\begin{thm} \label{T3}
Let a hyperbolic manifold $M$ be the union of an increasing sequence of open subsets $M_j$, for each of which there exists a biholomorphism $\psi^j:M_j \to \Omega$ where $\Omega$ is a bounded taut domain in $\mathbb{C}^n$. If there exists a point $z^0 \in M$ with $\{\psi^j(z^0)\}$ being compactly contained in $\Omega$, then $M$ is biholomorphic to $\Omega$. If not, for any $z^0 \in M$, the sequence $\{\psi^j(z^0)\}$ has at least one limit point $p^0 \in \partial \Omega$. 

\begin{enumerate}
\item[(i)] If $p^0$ is a Levi corank one point of $\partial \Omega$, then $ M $ is biholomorphic to a limiting domain of the form
\begin{equation*}
 \Omega_{\infty}=\Big\{ z \in \mathbb{C}^n : 2 \Re z_n + P_{2m} \left(z_1, \overline{z}_1 \right) + \sum_{j=2}^{n-1} \vert z_j \vert^2 < 0 \Big\},
\end{equation*}
where $ 2m $ is the $1$-type of $ \partial \Omega $ at $ p^0 $ and $ P_{2m} \left(z_1, \overline{z}_1 \right) $ is a subharmonic polynomial of degree at most $ 2m $ without any harmonic terms.
 
\item[(ii)] If $\partial \Omega$ near $p^0$ is smooth and convex, then $ M $ is biholomorphic to a complete hyperbolic convex domain in $\mathbb{C}^n$. Further, if $p^0$ is a point of finite type $ 2m $, then $M$ is biholomorphic to a limiting domain
\begin{equation*}
 \Omega_{\infty}=\Big\{ z \in \mathbb{C}^n : 2 \Re z_n + P \left('z, \overline{'z} \right) < 0 \Big\},
 \end{equation*}
where $P \left('z, \overline{'z} \right)$ is a convex polynomial of degree at most $ 2m $ without any harmonic terms. If $\partial \Omega$ near $ p^0 $ is 
convex (but not necessarily smooth) and does not contain any nontrivial complex analytic varieties, and $M$ is (a priori) known to 
be complete hyperbolic, then $M$ is biholomorphic to $\Omega$.

\item[(iii)] If $n=2$ and $ \Omega $ near $p^0 $, after a holomorphic change of coordinates, is a strongly pseudoconvex polyhedral domain, then $M$ is biholomorphic to a limiting domain $\Om_{\infty}$, which is one of $ \Delta^2 $, $ \mathbb{B}^2 $, or a Siegel domain as in \eqref{Sie}.

\end{enumerate}
\end{thm}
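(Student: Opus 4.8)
The plan is to run the proofs of Theorems~\ref{T1} and~\ref{T2} essentially verbatim, after observing that the entire scaling apparatus is \emph{purely local}. First, in the case where $\{\psi^j(z^0)\}$ is relatively compact in $\Omega$, the tautness of $\Omega$ forces $\{\psi^j\}$ to subconverge to a biholomorphism $M \to \Omega$, exactly as in case~(a) of Section~2, giving $M \simeq \Omega$. So I would concentrate on the boundary case $p^0 \in \partial\Omega$, where the dichotomy of Theorem~\ref{T1} collapses: since the local hypotheses already place us at a boundary point with prescribed geometry, the conclusion is directly $M \simeq \Omega_\infty$.

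The key point is the remark already recorded after Theorem~\ref{T2}: the scaling maps $A^j$, and hence the limit $\Omega_\infty$, depend only on the geometry of $\partial\Omega$ near $p^0$. Concretely, in each case the anisotropic dilations $\Delta^{\epsilon_j}_{\zeta^j}$ (respectively $\Lambda^{\epsilon_j}_{p^j}$, or the Kim--Yu dilations) blow $\mathbb{C}^n$ up at $p^0$, so that under $(A^j)^{-1}$ any fixed compact subset of $\mathbb{C}^n$ corresponds to a shrinking neighbourhood of $p^0$; the portion of $\partial\Omega$ away from $p^0$ is pushed to infinity and does not enter the local Hausdorff limit. Thus the centering maps $\theta^{\zeta^j}$ of \eqref{cen-exp} and the normal form \eqref{nrmlfrm} (in case~(i)), the Gaussier--McNeal construction (in case~(ii)), and the Kim--Yu scaling (in case~(iii)) are all built from the local defining data alone, and the respective limiting domains $\Omega_\infty$ are exactly the polynomial, convex, and $\Delta^2/\mathbb{B}^2/$Siegel models listed. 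With this in hand, Proposition~\ref{k-stability} holds in the present local setting: one obtains $A^j:\Omega \to \Omega_j$ with $\Omega_j \to \Omega_\infty$ taut, $q^j \to q^0 \in \Omega_\infty$, normality of $\tilde\psi^j = A^j \circ \psi^j$, and the Kobayashi-distance stability \eqref{E4}, since the cited stability statements (\cite{TT}, \cite{Gaussier-1997}, \cite{Zimmer}, \cite{Kim-Yu}) require only a boundary piece near $p^0$ of the relevant type.

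Once Proposition~\ref{k-stability} is available, Step~II of Section~3 applies without change. The limit $\tilde\psi:M \to \Omega_\infty$ is injective because, combining \eqref{E1}--\eqref{E3} with \eqref{E4} and letting $j \to \infty$, one gets $d_M(z^1,z^2) \le d_{\Omega_\infty}(\tilde\psi(z^1),\tilde\psi(z^2))$, so hyperbolicity of $M$ gives injectivity; and it is surjective because the backward scaling sequence $\tilde\phi^j = (\tilde\psi^j)^{-1}$ cannot diverge (as $\tilde\phi^j(q^j) = z^0$ with $q^j \to q^0$), whence tautness of $\Omega_\infty$ lets it subconverge to $\tilde\phi:\Omega_\infty \to M$, which is open by Hurwitz and satisfies $\tilde\psi \circ \tilde\phi = \mathrm{id}$. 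This yields $M \simeq \Omega_\infty$ in each of (i)--(iii).

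For the non-smooth convex subcase of (ii), where the conclusion is instead $M \simeq \Omega$, I would use the Frankel-type scaling of Proposition~\ref{scalprop} in place of the dilation scaling: the linear maps $A_j = [d\psi^j(z^0)]^{-1}$ produce a limit $\hat\Omega$ biholomorphic to $\Omega$, with the complete hyperbolicity of $M$ used throughout Lemmas~\ref{omeglem}--\ref{A_j-sig_j}. The only place where absence of analytic varieties in $\partial\Omega$ enters is Lemma~\ref{eigen}, where a limit disc $\Phi$ with $\Phi(0)=p^0$ is forced into $\partial\Omega$ and must be constant; shrinking the disc $\Delta_0$ so that $\Phi(\Delta_0)$ stays in a small neighbourhood of $p^0$ shows that only the local non-existence of analytic discs through $p^0$ is needed, matching the local hypothesis. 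The main obstacle, throughout, is precisely the verification that the convergence $\Omega_j \to \Omega_\infty$ and the Kobayashi-metric stability genuinely localize, i.e. that the global part of $\Omega$ contributes nothing to the limit; but this is exactly what the anisotropic nature of the scalings guarantees and what the cited references establish.
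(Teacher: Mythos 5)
Your overall strategy --- rerun the proofs of Theorems~\ref{T1} and \ref{T2} after observing that the scaling constructions and the stability statements of Proposition~\ref{k-stability} depend only on the geometry of $\partial \Omega$ near $p^0$ --- is exactly what the paper intends: Theorem~\ref{T3} is stated in the concluding section with its proof omitted as ``similar'', and the remark following Theorem~\ref{T2} records precisely the locality of $\Omega_j$ and $\Omega_\infty$ that you exploit. For part (i), part (iii), and the smooth portion of part (ii), your outline is faithful and correct: the centering maps \eqref{cen-exp} and the normal form \eqref{nrmlfrm}, the supporting-line data $\tau_i(p^j,\epsilon_j)$ in the Gaussier construction, and the Kim--Yu scaling are all built from local defining data; the base points $p^j \to p^0$ keep the construction inside the region where the local hypotheses hold; and Step~II (injectivity via \eqref{E1}--\eqref{E3} together with \eqref{E4}, surjectivity via the backward sequence $\tilde{\phi}^j$, Hurwitz, and $\tilde{\psi}\circ\tilde{\phi}=\mathrm{id}$) is insensitive to how $\Omega_\infty$ was produced.

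There is, however, a genuine gap in your treatment of the non-smooth convex subcase of (ii). You claim that the only place where the local hypothesis enters Proposition~\ref{scalprop} is Lemma~\ref{eigen}; that mislocates where convexity is used. The normality argument of Lemma~\ref{omeglem} hinges on the midpoint map $\varphi(z,\zeta)=(z+\zeta)/2$: for the maps $F_m$ to be well defined one needs
\[
\frac{1}{2}\bigl(\omega_m(z)+\omega_m(\zeta)\bigr) \in \omega_m(M_m) = \bigl[d\psi^m(z^0)\bigr]^{-1}(\Omega - p^m),
\]
i.e.\ convexity of the affine images of $\Omega$ itself, hence \emph{global} convexity of $\Omega$ --- not merely convexity of $\partial\Omega$ near $p^0$ as Theorem~\ref{T3}(ii) hypothesizes. (Proposition~\ref{scalprop} is indeed stated in the paper only for globally convex $\Omega$.) So the Frankel scaling does not apply verbatim under the local hypothesis; one must first localize, e.g.\ by showing that the relevant part of the construction can be confined to a convex piece $\Omega \cap U$ near $p^0$, using a localization of the invariant metrics at the peak-type point $p^0$ in the spirit of Kim's localization theorems, before running Lemmas~\ref{omeglem}--\ref{A_j-sig_j}. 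Your handling of Lemma~\ref{eigen} itself (shrinking $\Delta_0$ so that the limit disc stays in a neighbourhood of $p^0$ where $\partial\Omega$ contains no varieties) is correct, but the missing localization of the midpoint construction is a substantive step, not a formality, and your proposal needs to supply it.
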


As a consequence, note that the $1$-type of every boundary accumulation point of $\{\psi^j(z^0)\}$ for any $ z^0 \in M $
must be the same. In particular, the Levi geometry of  $ \partial \Omega $ cannot be different for different boundary accumulation points. 

The second part of (ii) above, i.e., when $M$ is complete hyperbolic, implies that if $p^0$ has a neighbourhood $U$ such that $\Omega \cap U$ is biholomorphic to the minimal ball for instance, then the only possibility is that $M $ is biholomorphic to $ \Omega$.

\begin{thm}
Let a Kobayashi corank one manifold $M$ be the union of an increasing sequence of open subsets $M_j$, for each of which there exists a biholomorphism $\psi^j:M_j \to \Omega$ where $\Omega$ is a bounded taut domain in $\mathbb{C}^n$. If there exists a point $z^0 \in M$ with $\{\psi^j(z^0)\}$ being compactly contained in $\Omega$, then $M$ is biholomorphic to a fibre bundle with fibre $\mf{C}$ over a retract $Z$ of $\Om$. If not, for any $z^0 \in M$, the sequence $\{\psi^j(z^0)\}$ has at least one limit point $p^0 \in \partial \Omega$. Then under the hypothesis (i), (ii), or (iii) of Theorem~\ref{T3}, $M$ is biholomorphic to a fibre bundle with fibre $\mf{C}$ over a retract $Z$ of the corresponding limiting domain $\Om_{\infty}$.
\end{thm}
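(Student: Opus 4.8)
The plan is to run the proof of Theorem~\ref{T2} almost verbatim, using in place of the global scalings of Section~3 the purely local scaling furnished by (the proof of) Theorem~\ref{T3}, which we may freely invoke. Fix $z^0 \in M$, put $p^j := \psi^j(z^0)$ and $\phi^j := (\psi^j)^{-1}$, and separate the two usual cases according to whether $\{p^j\}$ is relatively compact in $\Om$ or accumulates at a boundary point $p^0 \in \pa\Om$.

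In the relatively compact case I would argue exactly as in case (a) of Theorem~\ref{T2}. The tautness of $\Om$ yields a limit $\psi := \lim \psi^j : M \to \Om$; the maps $\al^j := \psi \circ \phi^j$ subconverge to a holomorphic retraction $\al$ of $\Om$ onto $Z := \{\,w : \al(w) = w\,\} = \psi(M)$; and the Kobayashi-isometry identity \eqref{E8} passes to the limit \eqref{E10}, whence $F_M(p,v) = 0$ exactly when $d\psi(p)v = 0$. Since the corank of $F_M$ is one, $\psi$ has constant rank $n-1$, its fibres are the leaves of the null foliation of $F_M$, and the Forn\ae ss--Sibony argument (cf.\ Lemma~4.7 and Corollary~4.8 of \cite{FS1981}, which use only tautness of the target and the existence of $Z$) identifies each fibre with $\mf C$ and $M$ with a locally trivial $\mf C$-bundle over the retract $Z \subset \Om$.

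In the boundary case, the crucial point is that the scaling underlying Theorem~\ref{T3} depends only on the germ of $\pa\Om$ at $p^0$. Under any of the hypotheses (i)--(iii) it delivers biholomorphisms $A^j : \Om \to \Om_j$ with $\Om_j$ converging in the local Hausdorff sense to a \emph{taut} limit $\Om_\infty$, the stability estimate \eqref{E4} on $\ti\psi(M)$, and a limit $\ti\psi := \lim \ti\psi^j$ of $\ti\psi^j := A^j \circ \psi^j$. I would then repeat case (b) of Theorem~\ref{T2}: the maps $\ti\al^j := \ti\psi \circ (\ti\psi^j)^{-1} : \Om_j \to \Om_\infty$ subconverge, by tautness of $\Om_\infty$, to a holomorphic retraction $\ti\al$ of $\Om_\infty$ onto $\ti Z := \{\,w : \ti\al(w) = w\,\} = \ti\psi(M)$, while the metric stability \eqref{E5} turns the isometry relation \eqref{E9} into \eqref{E11}, so that again $F_M(p,v)=0$ iff $d\ti\psi(p)v = 0$ and $\ti\psi$ has constant rank $n-1$. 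The same fibre-bundle argument, now with target $\Om_\infty$ and retract $\ti Z$, then exhibits $M$ as a locally trivial $\mf C$-bundle over the retract $\ti Z$ of $\Om_\infty$.

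The part I expect to require the most care is the bookkeeping needed to certify that the Forn\ae ss--Sibony fibre-bundle construction, originally stated under the hypothesis that $\Om/\Aut(\Om)$ be compact, survives the passage to the limiting domain $\Om_\infty$. That hypothesis enters their proof only to guarantee tautness of the target and the existence of a nonconstant retract; both are supplied here---tautness of $\Om_\infty$ by Theorem~\ref{T3} and the retraction $\ti\al$ (respectively $\al$) by the normal-family argument above---so no genuinely new ingredient should be needed, only a verification that each step of \cite{FS1981} refers to no property of the model beyond these two.
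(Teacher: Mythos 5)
Your proposal is correct and follows essentially the same route as the paper, which omits the proof of this theorem precisely because it is the proof of Theorem~\ref{T2} run with the local scalings underlying Theorem~\ref{T3}: the same case (a)/(b) dichotomy, the limit retraction $\ti\al = \lim \ti\psi \circ \ti\phi^j$ onto $\ti Z = \ti\psi(M)$ via tautness of $\Om_\infty$, the stability \eqref{E5} turning \eqref{E9} into \eqref{E11} so that the corank-one hypothesis forces constant rank $n-1$, and the Forn\ae ss--Sibony fibre-bundle construction over the retract. One cosmetic remark: the bundle structure comes from the arguments following Lemmas~4.1--4.4 and the main theorem of \cite{FS1981} (Lemma~4.7 and Corollary~4.8 there serve rather to identify $Z$ and trivialize the bundle when the model is $\mf{B}^n$ or $\De^n$), but, as you correctly observe, only tautness of the target and the existence of the retraction are actually used there.
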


 Finally, we do not know whether a version of Theorem \ref{T2} holds when $\Omega$ is either the symmetrized polydisc or a simply connected domain with generic piecewise smooth Levi-flat boundary.


\begin{bibdiv}
\begin{biblist}

\bib{Aba}{article}{
AUTHOR = {Abate, Marco},
title={Iteration theory of holomorphic maps on taut manifolds},
Note={http://pagine.dm.unipi.it/abate/libri/libriric/libriric.html},
}

\bib{Ce}{collection}{
   title={Metrical and dynamical aspects in complex analysis},
   series={Lecture Notes in Mathematics},
   volume={2195},
   editor={Blanc-Centi, L\'{e}a},
   note={Papers based on lectures from the CNRS's Thematic School held in
   Lille, 2015;
   CEMPI Series},
   publisher={Springer, Cham},
   date={2017},
   pages={xiv+171},
   isbn={978-3-319-65836-0},
   isbn={978-3-319-65837-7},
   review={\MR{3727123}},
}

\bib{Behrens}{article}{
    AUTHOR = {Behrens, Mechthild},
     TITLE = {A generalisation of a theorem of {F}orn\ae ss-{S}ibony},
   JOURNAL = {Math. Ann.},
  FJOURNAL = {Mathematische Annalen},
    VOLUME = {273},
      YEAR = {1985},
    NUMBER = {1},
     PAGES = {123--130},
      ISSN = {0025-5831},
   MRCLASS = {32H15},
  MRNUMBER = {814199},
MRREVIEWER = {Harold P. Boas},
       DOI = {10.1007/BF01455918},
       URL = {https://doi.org/10.1007/BF01455918},
}

\bib{C}{book}{
   author={Chirka, E. M.},
   title={Complex analytic sets},
   series={Mathematics and its Applications (Soviet Series)},
   volume={46},
   note={Translated from the Russian by R. A. M. Hoksbergen},
   publisher={Kluwer Academic Publishers Group, Dordrecht},
   date={1989},
   pages={xx+372},
   isbn={0-7923-0234-6},
   review={\MR{1111477}},
   doi={10.1007/978-94-009-2366-9},
}
		
\bib{Cho2}{article}{
    AUTHOR = {Cho, Sanghyun},
     TITLE = {Boundary behavior of the {B}ergman kernel function on some
              pseudoconvex domains in {${\bf C}^n$}},
   JOURNAL = {Trans. Amer. Math. Soc.},
  FJOURNAL = {Transactions of the American Mathematical Society},
    VOLUME = {345},
      YEAR = {1994},
    NUMBER = {2},
     PAGES = {803--817},
      ISSN = {0002-9947},
   MRCLASS = {32H10 (32H15)},
  MRNUMBER = {1254189},
MRREVIEWER = {So-Chin Chen},
       DOI = {10.2307/2154999},
       URL = {https://doi.org/10.2307/2154999},
}
		
{\bib{DS}{article}{
   author={Diederich, Klas},
   author={Sibony, Nessim},
   title={Strange complex structures on Euclidean space},
   journal={J. Reine Angew. Math.},
   volume={311(312)},
   date={1979},
   pages={397--407},
   issn={0075-4102},
   review={\MR{549981}},
}}

\bib{FS1981}{article}{
    AUTHOR = {Forn\ae ss, John Erik}
    AUTHOR= {Sibony, Nessim},
     TITLE = {Increasing sequences of complex manifolds},
   JOURNAL = {Math. Ann.},
  FJOURNAL = {Mathematische Annalen},
    VOLUME = {255},
      YEAR = {1981},
    NUMBER = {3},
     PAGES = {351--360},
      ISSN = {0025-5831},
   MRCLASS = {32H15 (32H20)},
  MRNUMBER = {615855},
MRREVIEWER = {Akio Kodama},
       DOI = {10.1007/BF01450708},
       URL = {https://doi.org/10.1007/BF01450708},
}

\bib{For-Sto}{article}{
   author={Fornaess, John Erik},
   author={Stout, Edgar Lee},
   title={Polydiscs in complex manifolds},
   journal={Math. Ann.},
   volume={227},
   date={1977},
   number={2},
   pages={145--153},
   issn={0025-5831},
   review={\MR{435441}},
   doi={10.1007/BF01350191},
}

\bib{Fu}{article}{
AUTHOR = {Fu, Siqi},
     TITLE = {Estimates of invariant metrics on pseudoconvex domains near
              boundaries with constant {L}evi ranks},
   JOURNAL = {J. Geom. Anal.},
  FJOURNAL = {Journal of Geometric Analysis},
    VOLUME = {24},
      YEAR = {2014},
    NUMBER = {1},
     PAGES = {32--46},
      ISSN = {1050-6926},
   MRCLASS = {32F45 (32T27)},
  MRNUMBER = {3145913},
MRREVIEWER = {Kaushal Verma},
       DOI = {10.1007/s12220-012-9325-1},
       URL = {https://doi.org/10.1007/s12220-012-9325-1},
}

\bib{Fu-Wong}{article}{
  AUTHOR = {Fu, Siqi},
  AUTHOR = {Wong, Bun},
     TITLE = {On a domain in {${\bf C}^2$} with generic piecewise smooth
              {L}evi-flat boundary and non-compact automorphism group},
   JOURNAL = {Complex Variables Theory Appl.},
  FJOURNAL = {Complex Variables. Theory and Application. An International
              Journal},
    VOLUME = {42},
      YEAR = {2000},
    NUMBER = {1},
     PAGES = {25--40},
      ISSN = {0278-1077},
   MRCLASS = {32M17},
  MRNUMBER = {1786125},
       DOI = {10.1080/17476930008815269},
       URL = {https://doi.org/10.1080/17476930008815269},
}

\bib{Gaussier-1997}{article}{
   AUTHOR = {Gaussier, Herv\'{e}},
     TITLE = {Characterization of convex domains with noncompact
              automorphism group},
   JOURNAL = {Michigan Math. J.},
  FJOURNAL = {Michigan Mathematical Journal},
    VOLUME = {44},
      YEAR = {1997},
    NUMBER = {2},
     PAGES = {375--388},
      ISSN = {0026-2285},
   MRCLASS = {32M05 (32F15)},
  MRNUMBER = {1460422},
MRREVIEWER = {John Bland},
       DOI = {10.1307/mmj/1029005712},
       URL = {https://doi.org/10.1307/mmj/1029005712},
}

\bib{Gr-Wu}{article}{
   author={Graham, Ian},
   author={Wu, H.},
   title={Characterizations of the unit ball $B^n$ in complex Euclidean
   space},
   journal={Math. Z.},
   volume={189},
   date={1985},
   number={4},
   pages={449--456},
   issn={0025-5874},
}

\bib{HP}{article}{
   author={Hahn, K. T.},
   author={Pflug, Peter},
   title={On a minimal complex norm that extends the real Euclidean norm},
   journal={Monatsh. Math.},
   volume={105},
   date={1988},
   number={2},
   pages={107--112},
   issn={0026-9255},
   review={\MR{930429}},
   doi={10.1007/BF01501163},
}

\bib{Huc-Orm78}{article}{
   author={Huckleberry, Alan T.},
   author={Ormsby, Ellen},
   title={Nonexistence of proper holomorphic maps between certain complex
   manifolds},
   journal={Manuscripta Math.},
   volume={26},
   date={1978/79},
   number={4},
   pages={371--379},
   issn={0025-2611},
   review={\MR{520107}},
   doi={10.1007/BF01170261},
}

\bib{Kim1}{article}{
   author={Kim, Kang-Tae},
   title={Complete localization of domains with noncompact automorphism
   groups},
   journal={Trans. Amer. Math. Soc.},
   volume={319},
   date={1990},
   number={1},
   pages={139--153},
   issn={0002-9947},
   review={\MR{986028}},
   doi={10.2307/2001339},
}

\bib{Kim2}{article}{
   author={Kim, Kang-Tae},
   title={Automorphism groups of certain domains in ${\bf C}^n$ with a
   singular boundary},
   journal={Pacific J. Math.},
   volume={151},
   date={1991},
   number={1},
   pages={57--64},
   issn={0030-8730},
   review={\MR{1127586}},
}

\bib{Kim}{article}{
 AUTHOR = {Kim, Kang-Tae},
     TITLE = {Domains in {$ \mathbb{C}^n$} with a piecewise {L}evi flat boundary
              which possess a noncompact automorphism group},
   JOURNAL = {Math. Ann.},
  FJOURNAL = {Mathematische Annalen},
    VOLUME = {292},
      YEAR = {1992},
    NUMBER = {4},
     PAGES = {575--586},
      ISSN = {0025-5831},
   MRCLASS = {32F99 (32M05)},
  MRNUMBER = {1157315},
MRREVIEWER = {Autorreferat},
       DOI = {10.1007/BF01444637},
       URL = {https://doi.org/10.1007/BF01444637},
}

\bib{Kim-Yu}{article}{
author={Kim, Kang-Tae},
author={Yu, Jiye},
     TITLE = {Boundary behavior of the {B}ergman curvature in strictly
              pseudoconvex polyhedral domains},
   JOURNAL = {Pacific J. Math.},
  FJOURNAL = {Pacific Journal of Mathematics},
    VOLUME = {176},
      YEAR = {1996},
    NUMBER = {1},
     PAGES = {141--163},
      ISSN = {0030-8730},
   MRCLASS = {32H10 (32F15)},
  MRNUMBER = {1433986},
MRREVIEWER = {Gregor Herbort},
       URL = {http://projecteuclid.org/euclid.pjm/1102352055},
}

\bib{PK}{article}{
AUTHOR = {Klingenberg, W.},
author = {Pinchuk, S.},
     TITLE = {Normal families of proper holomorphic correspondences},
   JOURNAL = {Math. Z.},
  FJOURNAL = {Mathematische Zeitschrift},
    VOLUME = {207},
      YEAR = {1991},
    NUMBER = {1},
     PAGES = {91--96},
      ISSN = {0025-5874},
   MRCLASS = {32H35 (32H99)},
  MRNUMBER = {1106815},
MRREVIEWER = {I. P. Ramadanov},
       DOI = {10.1007/BF02571377},
       URL = {https://doi.org/10.1007/BF02571377},
}

\bib{Lem81}{article}{
   author={Lempert, L\'{a}szl\'{o}},
   title={La m\'{e}trique de Kobayashi et la repr\'{e}sentation des domaines sur la
   boule},
   language={French, with English summary},
   journal={Bull. Soc. Math. France},
   volume={109},
   date={1981},
   number={4},
   pages={427--474},
   issn={0037-9484},
   review={\MR{660145}},
}

\bib{Lem82}{article}{
   author={Lempert, L\'{a}szl\'{o}},
   title={Holomorphic retracts and intrinsic metrics in convex domains},
   language={English, with Russian summary},
   journal={Anal. Math.},
   volume={8},
   date={1982},
   number={4},
   pages={257--261},
   issn={0133-3852},
   review={\MR{690838}},
   doi={10.1007/BF02201775},
}

{\bib{Liu}{article}{
   author={Liu, Bingyuan},
   title={Two applications of the Schwarz lemma},
   journal={Pacific J. Math.},
   volume={296},
   date={2018},
   number={1},
   pages={141--153},
   issn={0030-8730},
   review={\MR{3803725}},
   doi={10.2140/pjm.2018.296.141},
}}

\bib{MV2012}{article}{
    AUTHOR = {Mahajan, Prachi}
    AUTHOR = {Verma, Kaushal},
     TITLE = {Some aspects of the {K}obayashi and {C}arath\'{e}odory metrics on
              pseudoconvex domains},
   JOURNAL = {J. Geom. Anal.},
  FJOURNAL = {Journal of Geometric Analysis},
    VOLUME = {22},
      YEAR = {2012},
    NUMBER = {2},
     PAGES = {491--560},
      ISSN = {1050-6926},
   MRCLASS = {32F45 (32Q45)},
  MRNUMBER = {2891736},
MRREVIEWER = {Gregor Herbort},
       DOI = {10.1007/s12220-010-9206-4},
       URL = {https://doi.org/10.1007/s12220-010-9206-4},
}

\bib{Mcneal-1992}{article}{
    AUTHOR = {McNeal, Jeffery D.},
     TITLE = {Convex domains of finite type},
   JOURNAL = {J. Funct. Anal.},
  FJOURNAL = {Journal of Functional Analysis},
    VOLUME = {108},
      YEAR = {1992},
    NUMBER = {2},
     PAGES = {361--373},
      ISSN = {0022-1236},
   MRCLASS = {32F05 (32F99)},
  MRNUMBER = {1176680},
MRREVIEWER = {Emil Straube},
       DOI = {10.1016/0022-1236(92)90029-I},
       URL = {https://doi.org/10.1016/0022-1236(92)90029-I},
}

\bib{Nar}{book}{
AUTHOR = {Narasimhan, Raghavan},
     TITLE = {Several complex variables},
    SERIES = {Chicago Lectures in Mathematics},
 PUBLISHER = {The University of Chicago Press, Chicago, Ill.-London},
      YEAR = {1971},
     PAGES = {x+174},
   MRCLASS = {32-01},
  MRNUMBER = {0342725},
MRREVIEWER = {J. Kajiwara},
}

\bib{OPY}{article}{
   author={Oeljeklaus, K.},
   author={Pflug, P.},
   author={Youssfi, E. H.},
   title={The Bergman kernel of the minimal ball and applications},
   language={English, with English and French summaries},
   journal={Ann. Inst. Fourier (Grenoble)},
   volume={47},
   date={1997},
   number={3},
   pages={915--928},
   issn={0373-0956},
   review={\MR{1465791}},
}		

\bib{Ris64}{article}{
   author={Rischel, Hans},
   title={Holomorphe \"{U}berlagerungskorrespondenzen},
   language={German},
   journal={Math. Scand.},
   volume={15},
   date={1964},
   pages={49--63},
   issn={0025-5521},
   review={\MR{178160}},
   doi={10.7146/math.scand.a-10726},
}

\bib{Rosay}{article}{
    AUTHOR = {Rosay, Jean-Pierre},
     TITLE = {Sur une caract\'{e}risation de la boule parmi les domaines de
              {${\bf C}^{n}$} par son groupe d'automorphismes},
   JOURNAL = {Ann. Inst. Fourier (Grenoble)},
  FJOURNAL = {Universit\'{e} de Grenoble. Annales de l'Institut Fourier},
    VOLUME = {29},
      YEAR = {1979},
    NUMBER = {4},
     PAGES = {ix, 91--97},
      ISSN = {0373-0956},
   MRCLASS = {32F15 (32M05)},
  MRNUMBER = {558590},
MRREVIEWER = {D. N. Akhiezer},
       URL = {http://www.numdam.org/item?id=AIF_1979__29_4_91_0},
}

\bib{R}{book}{
   author={Rudin, Walter},
   title={Function theory in the unit ball of $\Bbb C^n$},
   series={Classics in Mathematics},
   note={Reprint of the 1980 edition},
   publisher={Springer-Verlag, Berlin},
   date={2008},
   pages={xiv+436},
   isbn={978-3-540-68272-1},
   review={\MR{2446682}},
}

\bib{Suf74}{article}{
   author={Suffridge, T. J.},
   title={Common fixed points of commuting holomorphic maps of the
   hyperball},
   journal={Michigan Math. J.},
   volume={21},
   date={1974},
   pages={309--314},
   issn={0026-2285},
   review={\MR{367661}},
}

\bib{TT}{article}{
AUTHOR = {Thai, Do Duc},
author = {Thu, Ninh Van},
     TITLE = {Characterization of domains in {$\Bbb C^n$} by their
              noncompact automorphism groups},
   JOURNAL = {Nagoya Math. J.},
  FJOURNAL = {Nagoya Mathematical Journal},
    VOLUME = {196},
      YEAR = {2009},
     PAGES = {135--160},
      ISSN = {0027-7630},
   MRCLASS = {32M05},
  MRNUMBER = {2591094},
MRREVIEWER = {W\l odzimierz Zwonek},
       DOI = {10.1017/S002776300000982X},
       URL = {https://doi.org/10.1017/S002776300000982X},
}

\bib{Zimmer}{article}{
    AUTHOR = {Zimmer, Andrew M.},
     TITLE = {Gromov hyperbolicity and the {K}obayashi metric on convex
              domains of finite type},
   JOURNAL = {Math. Ann.},
  FJOURNAL = {Mathematische Annalen},
    VOLUME = {365},
      YEAR = {2016},
    NUMBER = {3-4},
     PAGES = {1425--1498},
      ISSN = {0025-5831},
   MRCLASS = {32F18 (32F45 51F99 53C23)},
  MRNUMBER = {3521096},
MRREVIEWER = {Adriano Tomassini},
       DOI = {10.1007/s00208-015-1278-9},
       URL = {https://doi.org/10.1007/s00208-015-1278-9},
}

\end{biblist}
\end{bibdiv}

\end{document}